\newtheorem{theorem}{Theorem} 
\newtheorem{lem}[theorem]{Lemma}
\newtheorem{prop}[theorem]{Proposition}
\newtheorem{rem}[theorem]{Remark}
\numberwithin{equation}{section}
\numberwithin{theorem}{section}
\title{Stochastic Control on Space of Random Variables} 
\author[A. Bensoussan]{Alain Bensoussan}
\address{International Center for Decision and Risk Analysis,
	Jindal School of Management, University of Texas at Dallas, Richardson, TX 75083, USA
and School of Data Sciences, City University Hong Kong, Kowloon Tong, Kowloon, Hong Kong}
\email{Alain.Bensoussan@utdallas.edu} 
\author[P.J. Graber]{P. Jameson Graber}
\address{Department of Mathematics, Baylor University, One Bear Place, Waco, TX 97328, USA}
\email{jameson\_graber@baylor.edu} 
\author[S.~C.~P. Yam]{S.~C.~P. Yam}
\address{Department of Statistics, The Chinese University of Hong Kong, Shatin, Hong Kong}
\email{scpyam@sta.cuhk.edu.hk}
\keywords{mean field type control problems; regularity in time; first and second order G\^ateaux derivatives; Bellman equation; master equation; HJB-FP equation}
\subjclass[2010]{34K50, 60H10, 60F99, 93E20}
\thanks{The first author was supported by  the National Science Foundation under grant
	DMS-1612880, and the Research Grants Council of the Hong Kong Special Administrative Region CityU 113 03 316.\\
	The second author was supported by the National Science Foundation, grant DMS-1612880.\\
	The third author acknowledges the financial support from HKGRF-14300717 with the project title: New Kinds of Forward-Backward Stochastic Systems with Applications, HKSAR-GRF-14301015 with title: Advance in Mean Field Theory, Direct Grant for Research 2014/15 with project code: 4053141 offered by CUHK. He also thanks Columbia University for the kind invitation to be a visiting faculty member in the Department of Statistics during his sabbatical leave. The third author also recalls the unforgettable moments and the happiness shared with his beloved father during the drafting of the present article at their home. Although he lost his father with the deepest sadness at the final stage of the review of this work, his father will never leave the heart of Phillip Yam; and he used this work in memory of his father’s brave battle against liver cancer.} 
\date{\today}
\begin{document}
\maketitle


\begin{abstract}
By extending \cite{bensoussan2015control}, we implement the proposal of Lions \cite{lions14} on studying mean field games and their master equations via certain control problems on the Hilbert space of square integrable random variables. In \cite{bensoussan2015control}, the Hilbert space could be quite general in the face of the ``deterministic control problem'' due to the absence of additional randomness; while the special case of $L^2$ space of square integrable random variables was brought in at the interpretation stage. The effectiveness of the approach was demonstrated by deriving Bellman equations and the first order master equations through control theory of dynamical systems valued in the Hilbert space. In our present problem for second order master equations, it connects with a stochastic control problem over the space of random variables, and it possesses an additional randomness generated by the Wiener process which cannot be detached from the randomness caused by the elements in the Hilbert space. Nevertheless, we demonstrate how to tackle this difficulty, while preserving most of the efficiency of the approach suggested by Lions \cite{lions14}. 
\end{abstract}


\section{INTRODUCTION}
In this paper we study an stochastic optimal control problem on an infinite dimensional Hilbert space, consisting of $L^2$ random variables, and prove that the value function is the unique solution of the corresponding Hamilton-Jacobi-Bellman equation.
The primary motivation is mean field game theory and mean field type control. We refer to \cite{bensoussan2013mean} and references therein for an overview and comparison of the two topics. Mean field games were introduced simultaneously by Caines, Huang, and Malham\'e \cite{huang2006large} as well as Lasry and Lions \cite{lasry07} for the purpose of describing a Nash equilibrium in large population games.
However, as explained in Section 2.6 of \cite{lasry07}, it is common for mean field games to have a \emph{potential,} meaning the equilibrium is at the same time a minimizer for an optimization problem.
Later Lions introduced, in his lectures at the Coll\`ege de France \cite{lions07}, a PDE on the infinite dimensional space of probability measures dubbed ``the master equation" as a way of encoding all the information about the mean field game.
In particular, the solution of the master equation can be seen as the limit of the average value function in an $N$ player Nash equilibrium as $N \to \infty$; this was rigorously proved in \cite{cardaliaguet2015master}.
At the same time, in the case of mean field type control (and thus also for mean field potential games), where one has a corresponding Bellman equation on an infinite dimensional space, the master equation can be derived (at least formally) by differentiating the Bellman equation in the appropriate sense.
This constitutes the central motivation of the present work.

A common approach to studying Bellman equations and the master equation for mean field games and mean field type control has been to use the Wasserstein metric space of probability measures, since for mean field games and mean field type control problems, the key aspect is that the payoffs involve the evolving probability distributions of states.
We refer especially to the work of W.~Gangbo and A.~\'{S}wi\c{e}ch \cite{gangbo2015metric,gangbo2015existence}.
However, a dynamical system whose state space is not a vector space leads to challenging difficulties.
In \cite{lions14} Lions proposed a different approach, in which probability measures are ``lifted" to $L^2$ random variables, which form a Hilbert space.
This also allows a definition of a derivatives in the Wasserstein space, relying on the structure of the gradient in the space of $L^2$ random variables; also see the previously mentioned work of Cardaliaguet, Delarue, Lasry, and Lions \cite{cardaliaguet2015master}.
For Bellman equations, we refer to Pham and Wei \cite{pham2015bellman,pham2017dynamic} and also to Fabbri, Gozzi, and \'{S}wi\c{e}ch \cite{fabbri2017stochastic}.
Inspired by this ``lifting" method, we propose to analyze a control problem entirely on the space of random variables, while the objective functional depends solely on the law of the controlled process, then we can apply our results to mean field type control.
There remains of course the task of interpreting the abstract problem so as to eventually
solve for the mean field type control problem or the mean field game; in particular, one has to check that the dependence of the value function on the random variable is purely through its probability measure, which of course is automatic when one uses the Wasserstein space. As a trade-off, this is much easier than to develop control theory in the Wasssertein space.

In the former paper \cite{bensoussan2015control}, two of the co-authors considered an abstract
control problem for a system whose state space is a Hilbert space which is a purely deterministic one. The fact that the state space is infinite dimensional does not keep the methodology of
control theory from being applicable. In that work, we used a simple set of dynamics, since one of the objectives was to compare with the approach of Gangbo and \'{S}wi\c{e}ch \cite{gangbo2015existence}, when the Hilbert space is the space of random variables. Our approach turns out to be very effective in obtaining the Bellman equation and the master equation of mean field games; see also \cite{bensoussan2015master} and \cite{bensoussan2017interpretation}. 
In the ``deterministic'' case developed in the paper \cite{bensoussan2015control}, one obtains a first order Bellman equation. To address the second order Bellman equation, like those mentioned in the lectures of Lions \cite{lions07} (also see Carmona and Delarue \cite{carmona2017probabilistic,carmona2017probabilisticII}, Bensoussan, Frehse and Yam \cite{bensoussan2017interpretation}), one needs a stochastic control approach. One fundamental difficulty is that we cannot consider a stochastic control problem for a system whose state space is an arbitrary Hilbert space; there is an interaction between the randomness generated by the Wiener process driving the dynamics and the randomness of the elements of the Hilbert space. The main objective of this work is to develop this ``second order'' theory, and to show that it is possible to keep most of the advantages of the deterministic theory, even though the Hilbert space cannot be quite as arbitrary as that in \cite{bensoussan2015control}.

Using the lifting concept introduced by Lions \cite{lions14}, we shall work on the Hilbert space of square integrable random variables and the corresponding notion of G\^ateaux derivatives.
Thanks to the work of Carmona and Delarue \cite{carmona2017probabilistic, carmona2017probabilisticII}, there exist rules relating differentiation over the Hilbert space to so-called ``functional derivatives" over the Wasserstein space of probability measures. 
For first order derivatives, these notions are essentially equivalent.
This is not the case for second order derivatives; nevertheless, when both second order G\^ateaux derivatives and second order functional derivatives exist, we still have transformation formulae to convert one to another, which we call the \emph{rules of correspondence}. Based on these, the advantage of the Hilbert space approach emerges so that a reduced treatment with a direct method can be applied.

We first discuss the formalism in the next section before considering the control problem itself. 

\section{\label{sec:FORMALISM}RULES OF CORRESPONDENCE}
\subsection{WASSERSTEIN SPACE}
We consider the space $\mathcal{P}_{2}(\mathbb{R}^{n})$ of all probability measures on $\mathbb{R}^{n}$ with finite second order moments, equipped with the Wasserstein metric $W_{2}(\mu,\nu)$ defined by: 
\begin{equation*}
W_{2}^{2}(\mu,\nu):=\inf_{\gamma\in\Gamma(\mu,\nu)}\int_{\mathbb{R}^{n}\times \mathbb{R}^{n}}|\xi-\eta|^{2}\gamma(d\xi,d\eta) , 
\end{equation*}
where $\Gamma(\mu,\nu)$ denotes the set of all joint probability measures
on $\mathbb{R}^{n}\times \mathbb{R}^{n}$ such that the marginals are $\mu$ and $\nu$ respectively. Consider an atomless probability space $\left(\Omega, \mathcal{A},\mathbb{P}\right)$ and all its $L^2$ random variables namely, $\mathcal{H}:=L^{2}(\Omega,\mathcal{A},\mathbb{P};\mathbb{R}^{n})$. We then write, for any $X,Y \in \mathcal{H}$, $\mu=\mathcal{L}_{X}$, $\nu=\mathcal{L}_{Y}$, and so 
\begin{equation}
	W_2^2(\mu,\nu)=\inf_{X,Y\in\mathcal{H},\mathcal{L}_{X}=\mu,\mathcal{L}_{Y}=\nu}\mathbb{E}\left(|X-Y|^{2}\right) , \label{eq:2-100}
\end{equation}
where the infimum is attainable, i.e. there is a $(\hat{X}_{\mu}, \hat{X}_{\nu})$ each marginally from $\mathcal{H}$, 
\begin{equation}
W_{2}^2(\mu, \nu)=\mathbb{E}|\hat{X}_{\mu}-\hat{X}_{\nu}|^{2} .\label{eq:101}
\end{equation}
Observe that the map $X \mapsto \mathcal{L}_X$ from $\mathcal{H}$ to $\mathcal{P}_{2}(\mathbb{R}^{n})$ is continuous and surjective, and if we define an equivalence relation in $\mathcal{H}$ by setting $X \sim X' \, \text{if} \, \mathcal{L}_{X}=\mathcal{L}_{X'}$, then the Wasserstein metric is a metric on the quotient space.

\begin{comment}
\textcolor{red}{We also recall the following compactness result for $\mathcal{P}_{2}(\mathbb{R}^{n})$; see Carmona and Delarue \cite{carmona2017probabilistic, carmona2017probabilisticII}:
\begin{equation}
\text{\ensuremath{\{ \mu_k \}_k \subset \mathcal{P}_{2}(\mathbb{R}^{n})} so that \ensuremath{\sup_{k}\mathbb{E}|X_{\mu_{k}}|^{\beta}<\infty}, for a \ensuremath{\beta>2}, is relatively compact.} \label{eq:2-102}
\end{equation}
\textbf{Is this used anywhere??}}
\end{comment}

\subsection{LIFTING PROCEDURE AND FUNCTIONALS}
The lifting proceduce first introduced by P.L. Lions \cite{lions14} consists of regarding a functional $u(m)$ on $\mathcal{P}_{2}(\mathbb{R}^{n})$ as one on $\mathcal{H}$ such that $X\rightarrow u(\mathcal{L}_{X})$; by an abuse of notation, we also denote this functional by $u(X)$, such that $u(X)=u(X')$ whenever $X \sim X'$. The advantage of this approach is that $\mathcal{H}$, unlike $\mathcal{P}_{2}(\mathbb{R}^{n})$, has a Hilbert space structure. Observe that $u(m)$ is continuous with respect to the Wasserstein metric if and only if its lifted functional $u(X)$ is also continuous in $\mathcal{H}$. Indeed, the ``only if" direction follows immediately from definition \eqref{eq:2-100}, while the converse follows from the existence of minimizers in \eqref{eq:101}.

\subsubsection{FIRST ORDER DERIVATIVES}
We next turn to the concept of differentiability.
In $\mathcal{H}$, we can use the standard notion of G\^ateaux derivatives.
	In $\mathcal{P}_{2}(\mathbb{R}^{n})$, we use the notion of functional derivatives: for $u:\mathcal{P}_{2}(\mathbb{R}^{n}) \to \mathbb{R}$ we say that $u$ is differentiable at $m$ if there exists a function, denoted by $\dfrac{\partial u}{\partial m}(m)(x)$, which is continuous in both variables, fulfills 
	\begin{equation}
	\left| \dfrac{\partial u}{\partial m}(m)(x) \right| \leq c(m)(1+|x|^{2}) \label{eq:2-104}
	\end{equation}
	where $c(m)$ is continuous and bounded on bounded subsets of
	$\mathcal{P}_{2}(\mathbb{R}^{n})$, and such that $t\mapsto u(m+t(m'-m))$ is differentiable and for all $m' \in \mathcal{P}_{2}(\mathbb{R}^{n}$ and
\begin{equation}
\dfrac{d}{dt}u(m+t(m'-m))=\int_{\mathbb{R}^{n}}\dfrac{\partial u}{\partial m}(m+t(m'-m))(x)(dm'(x)-dm(x)). \label{eq:2-103}
\end{equation}
We must bear in mind that the functional derivative is unique only up to addition of a function depending on $m$ but constant in $x$.

We now address the relationship between these two notions of derivative.
Let us first assume that $u:\mathcal{P}_{2}(\mathbb{R}^{n}) \to \mathbb{R}$ has functional derivatives.
It does not immediately follow that the lifted version $X \mapsto u(X)$ is G\^ateaux differentiable.
Nevertheless, there is an interesting sufficiency result in Carmona and Delarue \cite{carmona2017probabilistic, carmona2017probabilisticII}: if (i) for each $m$, $x \mapsto \dfrac{\partial u}{\partial m}(m)(x)$ is differentiable; (ii) the gradient $D_{x}\dfrac{\partial u}{\partial m}(m)(x)$ is jointly continuous in $(m,x)$, which is at most of linear growth\footnote{Therefore, $\int_{\mathbb{R}^{n}}\left| D_{x}\dfrac{\partial u}{\partial m}(m)(x)\right|^{2}m(dx)<\infty$.} in $x$ with Lipschitz constant being uniformly bounded in $m$ on bounded sets of $\mathcal{P}_{2}(\mathbb{R}^{n})$, then $u$ is G\^ateaux differentiable so that 
\begin{equation}
D_{X}u(X)=D_{x}\dfrac{\partial u}{\partial m}(\mathcal{L}_{X})(X). \label{eq:2-105}
\end{equation}
Besides $D_{X}u(X) \in \mathcal{H}$, $D_{X}u(X)$ is also $\sigma(X)$-measurable, i.e.~it is a Lebesgue measurable function (from $\mathbb{R}^n$ to itself) of the random variable $X$; moreover, this function depends on $X$ only through its law of $\mathcal{L}_{X}$. These two properties can be made more precise by incorporating the notion of $L$-derivative, denoted by $\partial_{m}u(m)(x)$, as defined in Carmona and Delarue \cite{carmona2017probabilistic, carmona2017probabilisticII} by the formula
\begin{equation}
\partial_{m}u(m)(x)=D_{x}\dfrac{\partial u}{\partial m}(m)(x), \text{ for any $(m,x) \in \mathcal{P}_2(\mathbb{R}^n) \times \mathbb{R}^n$.} \label{eq:2-108}
\end{equation}
Furthermore, we then have
\begin{equation}
D_{X}u(X)=\partial_{m}u(\mathcal{L}_{X})(X) . \label{eq:2-107}
\end{equation}
Note that $x \mapsto D_{x}\dfrac{\partial u}{\partial m}(m)(x)$, unlike the functional derivative itself, is uniquely defined, which
is consistent with the fact that $D_{X}u(X)$ is uniquely defined
as an element in $\mathcal{H}$. 

Conversely, consider a functional $X \mapsto u(X)$ on $\mathcal{H}$, which is G\^ateaux differentiable and depends on $X$ solely through $\mathcal{L}_{X}$; further, if it is uniformly Lipschitz, i.e. for a $C>0$,
\begin{equation}
||D_{X}u(X)-D_{x}u(X')||\leq C||X-X'||, \label{eq:2-109}
\end{equation}
then the ``unlifted" functional $m \mapsto u(m)$ has an $L$-derivative, $\partial_{m}u(m)(x)$, which is (globally) jointly measurable in $(m,x)$ such that 
\begin{equation}
|\partial_{m}u(m)(x)-\partial_{m}u(m)(x')|\leq c|x-x'| .\label{eq:2-110}
\end{equation}
where $c>0$ is a constant independent of $m$; also see \cite{carmona2017probabilistic, carmona2017probabilisticII}. Besides, if $m_{k}\rightarrow m$ in Wasserstein sense, we also have 
\begin{equation}
\partial_{m}u(m_{k})(x)\rightarrow\partial_{m}u(m)(x),\:m-\text{a.e}\:x.\label{eq:2-111}
\end{equation}
In addition, if $(m,x) \mapsto \partial_{m}u(m)(x)$ is jointly continuous, then $u(m)$ has a functional derivative and (\ref{eq:2-108}) is also satisfied. 

We shall refer to \eqref{eq:2-107} (or \eqref{eq:2-105}) as ``the rule of correspondence" between derivatives over the Hilbert space of random variables and over the space of probability distributions.
Through this rule we obtain a synthesis of two different formalisms.
However, as our previous discussion suggests, it is not without limitations: the validity of \eqref{eq:2-107} is only guaranteed under certain assumptions.


\subsubsection{SECOND ORDER DERIVATIVES}
Before proceeding to discuss on the second order derivatives in the two frameworks and their connection, we first provide some useful formulae as a sequel of (\ref{eq:2-103}).
\begin{lem}
Under the assumption that 
\begin{equation}
(m,x) \mapsto D_{x}^{2}\dfrac{\partial u}{\partial m}(m)(x)\:\text{is jointly continuous and bounded, }\label{eq:2-1110}
\end{equation}
we can write $\dfrac{d}{dt}u(m+t(m'-m))$ as
\small{\begin{multline} \label{eq:2-1111} 
\mathbb{E}\left(D_{x}\dfrac{\partial u}{\partial m}(m+t(m'-m))(X_{m}) \cdot (X_{m'}-X_{m}) \right) \\
+  \mathbb{E} \left( \int_{0}^{1}\int_{0}^{1}\alpha D_{x}^{2}\dfrac{\partial u}{\partial m}(m+t(m'-m))(X_{m}+\alpha\beta(X_{m'}-X_{m}))(X_{m'}-X_{m}) \cdot (X_{m'}-X_{m}) d\alpha d\beta \right).
\end{multline}}
\end{lem}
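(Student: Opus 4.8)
The plan is to reduce the statement to a pointwise second-order Taylor expansion of the functional derivative and then pass to the expectation, starting from the defining identity \eqref{eq:2-103}. Fix $t\in[0,1]$ and abbreviate $v(x):=\frac{\partial u}{\partial m}\big(m+t(m'-m)\big)(x)$. The hypothesis \eqref{eq:2-1110}, together with the differentiability of the functional derivative assumed in the first-order discussion, guarantees that $v\in C^{2}(\mathbb{R}^{n})$ with $D_{x}^{2}v$ jointly continuous and bounded; in particular $D_{x}v$ has at most linear growth in $x$. Choosing any coupling $(X_{m},X_{m'})$ on the atomless space $(\Omega,\mathcal{A},\mathbb{P})$ with $\mathcal{L}_{X_{m}}=m$ and $\mathcal{L}_{X_{m'}}=m'$, I would first rewrite the right-hand side of \eqref{eq:2-103} as an expectation, since $\int_{\mathbb{R}^{n}}v\,dm'=\mathbb{E}[v(X_{m'})]$ and $\int_{\mathbb{R}^{n}}v\,dm=\mathbb{E}[v(X_{m})]$:
\[
\frac{d}{dt}u(m+t(m'-m)) = \int_{\mathbb{R}^{n}} v\,dm' - \int_{\mathbb{R}^{n}} v\,dm = \mathbb{E}\big[v(X_{m'})-v(X_{m})\big].
\]

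Next I would expand $v(X_{m'})-v(X_{m})$ pointwise in $\omega$ by applying the fundamental theorem of calculus twice along the segment from $X_{m}$ to $X_{m'}$. The first application gives $v(X_{m'})-v(X_{m})=\int_{0}^{1}D_{x}v\big(X_{m}+\alpha(X_{m'}-X_{m})\big)\cdot(X_{m'}-X_{m})\,d\alpha$; writing $D_{x}v(X_{m}+\alpha(X_{m'}-X_{m}))=D_{x}v(X_{m})+\int_{0}^{1}D_{x}^{2}v\big(X_{m}+\alpha\beta(X_{m'}-X_{m})\big)\,\alpha(X_{m'}-X_{m})\,d\beta$ and substituting yields exactly the integrand appearing in \eqref{eq:2-1111}, namely
\[
v(X_{m'})-v(X_{m}) = D_{x}v(X_{m})\cdot(X_{m'}-X_{m}) + \int_{0}^{1}\int_{0}^{1}\alpha\,D_{x}^{2}v\big(X_{m}+\alpha\beta(X_{m'}-X_{m})\big)(X_{m'}-X_{m})\cdot(X_{m'}-X_{m})\,d\alpha\,d\beta.
\]

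Finally I would take expectations on both sides and interchange $\mathbb{E}$ with the integrations in $\alpha$ and $\beta$. The step I expect to require the most care is the integrability justifying this interchange and the finiteness of the two resulting terms. For the first-order term, the linear growth of $D_{x}v$ (a consequence of the boundedness of $D_{x}^{2}v$) gives $D_{x}v(X_{m})\in\mathcal{H}$, and since $X_{m'}-X_{m}\in\mathcal{H}$ as well, Cauchy--Schwarz makes $D_{x}v(X_{m})\cdot(X_{m'}-X_{m})$ integrable. For the remainder, the boundedness in \eqref{eq:2-1110} dominates the integrand by a constant multiple of $|X_{m'}-X_{m}|^{2}\in L^{1}$, uniformly in $(\alpha,\beta)\in[0,1]^{2}$, so Fubini--Tonelli applies and the order of $\mathbb{E}$ and $\int_{0}^{1}\int_{0}^{1}$ may be exchanged; joint continuity of $D_{x}^{2}v$ secures the measurability of the integrand in $(\omega,\alpha,\beta)$. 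Combining this with the identity from the first paragraph produces the claimed expression \eqref{eq:2-1111}. I would also remark that, since the left-hand side is independent of the coupling, the formula in fact holds for every coupling $(X_{m},X_{m'})$ with the prescribed marginals.
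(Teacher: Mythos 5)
Your proposal is correct and follows essentially the same route as the paper's own proof: rewriting the right-hand side of \eqref{eq:2-103} as $\mathbb{E}[v(X_{m'})]-\mathbb{E}[v(X_{m})]$ and then Taylor-expanding along the segment from $X_m$ to $X_{m'}$ using the boundedness in \eqref{eq:2-1110}. Your added justification of the $\mathbb{E}$--$\int d\alpha\, d\beta$ interchange via Fubini and the linear-growth bound on $D_x v$ is a detail the paper leaves implicit, but it is the same argument.
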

\begin{proof}
Firstly, we can write 
\begin{align*}
\dfrac{d}{dt}u(m+t(m'-m))&=\mathbb{E}\left( \dfrac{\partial u}{\partial m}(m+t(m'-m))(X_{m'})\right)-\mathbb{E} \left(\dfrac{\partial u}{\partial m}(m+t(m'-m))(X_{m}) \right) \\
&= \mathbb{E} \left(\int_{0}^{1}D_{x}\dfrac{\partial u}{\partial m}(m+t(m'-m))(X_{m}+\alpha(X_{m'}-X_{m})) \cdot (X_{m'}-X_{m})d\alpha \right), 
\end{align*}
hence the result follows by applying Taylor's expansion by using the assumption (\ref{eq:2-1110}).
\end{proof}
The situation becomes more complicated if one puts a step further up to the second order level. We first define the second order functional derivatives: the second order functional derivative of a functional $u(m)$ at $m$ is a functional $(m,\xi,\eta) \mapsto \dfrac{\partial^{2}u}{\partial m^{2}}(m)(\xi,\eta)$ such that (i) 
it is jointly continuous and satisfies the following growth condition
\begin{equation} \label{eq:2-113}
\left|\dfrac{\partial^{2}u}{\partial m^{2}}(m)(\xi,\eta)\right|\leq c(m)(1+|\xi|^{2}+|\eta|^{2}) ,
\end{equation}
where $c(m)$ is continuous and is bounded on bounded subsets of $\mathcal{P}_{2}(\mathbb{R}^{n})$; and (ii) $u(m+t(m'-m))$ is twice differentiable in $t$ so that 
\begin{equation} \label{eq:2-112}
\dfrac{d^{2}}{dt^{2}}u(m+t(m'-m))=\int_{\mathbb{R}^{n} \times \mathbb{R}^{n}}\dfrac{\partial^{2}u}{\partial m^{2}}(m+t(m'-m))(\xi,\eta)(dm'(\xi)-dm(\xi))(dm'(\eta)-dm(\eta)).
\end{equation}
From (\ref{eq:2-112}), it is clear that a symmetric version in $(\xi,\eta)$ of $\dfrac{\partial^{2}u}{\partial m^{2}}(m)(\xi,\eta)$ exists, and it is defined up to a function
of the form $c_{1}(m,\xi)+c_{2}(m,\eta)$.
\begin{lem}\label{Lemma2_2} Under the assumption (\ref{eq:2-1110}) and the following:  
\begin{equation} \label{eq:2-115}
(m,\xi,\eta) \mapsto D_{\xi}D_{\eta}\dfrac{\partial^{2}u}{\partial m^{2}}(m)(\xi,\eta)\:\text{is jointly continuous and bounded}, 
\end{equation}
we have the following expression for $u(m')-u(m)$:
\small{\begin{multline} \label{eq:2-117}
\mathbb{E}\left( D_{x}\dfrac{\partial u}{\partial m}(m)(X_{m}) \cdot \Delta X \right) 
 + \mathbb{E}\left( \int_{0}^{1}\int_{0}^{1}\alpha D_{x}^{2}\dfrac{\partial u}{\partial m}(m)(X_{m}+\alpha\beta\Delta X)\Delta X \cdot \Delta Xd\alpha d\beta \right)  \\
 +  \mathbb{E} \left( \int_{0}^{1}\int_{0}^{1}\int_{0}^{1}\int_{0}^{1}tD_{\xi}D_{\eta}\dfrac{\partial^{2}u}{\partial m^{2}}(m+st(m'-m))(X_{m}+\alpha\Delta X,\tilde{X}_{m}+\beta\Delta \tilde X)\Delta \tilde X \cdot \Delta Xdsdtd\alpha d\beta \right)  
\end{multline}}
where $\Delta X := X_{m'}-X_{m}$, and $\tilde{X}_m,\tilde{X}_{m'}$ are independent copies of $X_m,X_{m'}$ respectively.
\end{lem}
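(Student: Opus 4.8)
The plan is to start from the fundamental theorem of calculus in the scaling parameter, writing $u(m')-u(m)=\int_0^1 g(t)\,dt$ with $g(t):=\frac{d}{dt}u(m+t(m'-m))$, and to peel off the first two terms of \eqref{eq:2-117} as the value $g(0)$, leaving a remainder that will generate the third term. Concretely, I would split $\int_0^1 g(t)\,dt = g(0)+\int_0^1\big(g(t)-g(0)\big)\,dt = g(0)+\int_0^1\!\!\int_0^t g'(s)\,ds\,dt$, which is legitimate because the existence of the second functional derivative forces $t\mapsto u(m+t(m'-m))$ to be $C^2$. Evaluating the conclusion \eqref{eq:2-1111} of the preceding lemma at $t=0$ then shows at once that $g(0)$ coincides with the sum of the first two expectations in \eqref{eq:2-117}.

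It remains to identify $\int_0^1\!\int_0^t g'(s)\,ds\,dt$ with the third term. Here I would invoke the very definition \eqref{eq:2-112} of the second functional derivative to express $g'(s)=\frac{d^2}{ds^2}u(m+s(m'-m))$ as the double integral of $\frac{\partial^2 u}{\partial m^2}(m+s(m'-m))(\xi,\eta)$ against the signed product measure $(dm'-dm)(\xi)(dm'-dm)(\eta)$. Introducing independent copies $(\tilde X_m,\tilde X_{m'})$ of $(X_m,X_{m'})$, this double integral becomes the mixed second difference $\mathbb{E}[h(X_{m'},\tilde X_{m'})-h(X_{m'},\tilde X_m)-h(X_m,\tilde X_{m'})+h(X_m,\tilde X_m)]$ with $h=\frac{\partial^2 u}{\partial m^2}(m+s(m'-m))$. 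Applying the fundamental theorem of calculus first in $\xi$ (along $X_m+\alpha\Delta X$) and then in $\eta$ (along $\tilde X_m+\beta\Delta\tilde X$), and using \eqref{eq:2-115} to bring the mixed derivative out and to secure integrability, this difference collapses to $\mathbb{E}[\int_0^1\int_0^1 D_\xi D_\eta\frac{\partial^2 u}{\partial m^2}(m+s(m'-m))(X_m+\alpha\Delta X,\tilde X_m+\beta\Delta\tilde X)\Delta\tilde X\cdot\Delta X\,d\alpha\,d\beta]$, with $\Delta X=X_{m'}-X_m$ and $\Delta\tilde X=\tilde X_{m'}-\tilde X_m$.

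Finally I would substitute this expression for $g'(s)$ into $\int_0^1\!\int_0^t g'(s)\,ds\,dt$ and change variables $s=\sigma t$ (so $ds=t\,d\sigma$) in the inner integral; this turns the argument $m+s(m'-m)$ into $m+\sigma t(m'-m)$, generates the weight $t$, and produces exactly the fourfold integral of \eqref{eq:2-117} once $\sigma$ is relabeled $s$. The main obstacle is not the algebra but its justification: one must check that $\mathbb{E}$ commutes with the $s,t,\alpha,\beta$ integrals and with the parameter differentiation, and that each integrand is dominated. This is precisely where the standing hypotheses \eqref{eq:2-1110} and \eqref{eq:2-115}—joint continuity and boundedness of $D_x^2\frac{\partial u}{\partial m}$ and of $D_\xi D_\eta\frac{\partial^2 u}{\partial m^2}$, together with $X_m,X_{m'}\in\mathcal{H}$—come into play, guaranteeing that all the Fubini interchanges and the two applications of the fundamental theorem of calculus are valid.
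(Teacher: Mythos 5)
Your proposal is correct and follows essentially the same route as the paper's proof: the paper likewise writes $f(t)=u(m+t(m'-m))$, uses the Taylor formula $f(1)=f(0)+f'(0)+\int_0^1\!\int_0^1 t\,f''(st)\,ds\,dt$ (your decomposition after the substitution $s=\sigma t$), identifies $f'(0)$ with the first two terms via the preceding lemma's formula \eqref{eq:2-1111} at $t=0$, and rewrites $f''$ through the mixed second difference with independent copies followed by two applications of the fundamental theorem of calculus, exactly as you do. Your added remarks on Fubini interchanges and domination, justified by \eqref{eq:2-1110} and \eqref{eq:2-115}, are consistent with the paper's implicit use of these boundedness assumptions.
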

\begin{proof}
The proof is put in the appendix A.
\end{proof}
As in the case of first-order derivatives, $D_\xi D_\eta \dfrac{\partial^{2}u}{\partial m^{2}}(m)(\xi,\eta)$ is uniquely determined even though $\dfrac{\partial^{2}u}{\partial m^{2}}(m)(\xi,\eta)$ is not.
Thus all the derivatives appearing in \eqref{eq:2-117} are uniquely defined.

We have already seen that $u(X)=u(\mathcal{L}_X)$ is G\^ateaux differentiable so that $D_{X}u(X)$ is given by \eqref{eq:2-105}. We now discuss the precise notion of the corresponding second order G\^ateaux differential. Consider two elements $X$ and $Y$ from $\mathcal{H}$, and take $m=\mathcal{L}_{X}$ and $m'=\mathcal{L}_{X+\epsilon Y}$, then we can take $X_{m}=X$, $X_{m'}=X+\epsilon Y$,$\:\tilde{X}_{m}=\tilde{X},\:\tilde{X}_{m'}=\tilde{X}+\epsilon\tilde{Y}$, where $(\tilde{X}, \tilde{Y})$ is an independent copy of $(X,Y)$, so that under assumptions (\ref{eq:2-1110}) and (\ref{eq:2-115}), (\ref{eq:2-117}) can be rewritten as:
\begin{align} \label{eq:2-118} 
&u(X+\epsilon Y)
=u(X)+\epsilon\mathbb{E}\left(D_{x}\dfrac{\partial u}{\partial m}(\mathcal{L}_{X})(X) \cdot Y \right) +\epsilon^{2}\mathbb{E}\left(\int_{0}^{1}\int_{0}^{1}\alpha D_{x}^{2}\dfrac{\partial u}{\partial m}(\mathcal{L}_{X})(X+\alpha\beta\epsilon Y)Y\cdot Y d\alpha d\beta \right) \nonumber \\
& \quad +\epsilon^{2}\mathbb{E}\left( \int_{0}^{1}\int_{0}^{1}\int_{0}^{1}\int_{0}^{1}tD_{\xi}D_{\eta}\dfrac{\partial^{2}u}{\partial m^{2}}(\mathcal{L}_{X}+st(\mathcal{L}_{X+\epsilon Y}-\mathcal{L}_{X}))(X+\alpha\epsilon Y,\tilde{X}+\beta\epsilon\tilde{Y})\tilde{Y} \cdot Ydsdtd\alpha d\beta\right). \nonumber
\end{align}
%
Therefore, we obtain that, as $\epsilon\rightarrow0$,
\begin{equation}
\dfrac{u(X+\epsilon Y)-u(X)-\epsilon \mathbb{E}\left(D_{X}u(X)\cdot Y\right)}{\epsilon^{2}}\rightarrow\dfrac{1}{2}\left[\mathbb{E}\left(D_{x}^{2}\dfrac{\partial u}{\partial m}(\mathcal{L}_{X})(X)Y \cdot Y \right) +\mathbb{E}\left(D_{\xi}D_{\eta}\dfrac{\partial^{2}u}{\partial m^{2}}(\mathcal{L}_{X})(X,\tilde{X})\tilde{Y}\cdot Y\right) \right] .\label{eq:2-119}
\end{equation}
The right hand side of (\ref{eq:2-119}) suggests us to define a bilinear continuous functional on $\mathcal{H}$ for each choice of $X \in \mathcal{H}$. For any two elements $Y,Z$ from $\mathcal{H}$, define an independent copy $(\tilde{X},\tilde{Y},\tilde{Z})$ of the triple $(X,Y,Z)$ and a bilinear functional such that 
\begin{equation}
B(X)(Z,Y)=\mathbb{E} \left( D_{x}^{2}\dfrac{\partial u}{\partial m}(\mathcal{L}_{X})(X)Z \cdot Y\right)+\mathbb{E}\left(D_{\xi}D_{\eta}\dfrac{\partial^{2}u}{\partial m^{2}}(\mathcal{L}_{X})(X,\tilde{X})\tilde{Z} \cdot Y \right) \label{eq:2-120}
\end{equation}
\begin{lem}\label{lemma_2_3} For each $X \in \mathcal{H}$, the bilinear form $B(X)(*,*)$ is symmetric, i.e. $B(X)(Z,Y)=B(X)(Y,Z)$.
\end{lem}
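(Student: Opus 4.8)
The plan is to split $B(X)(Z,Y)$ into its two constituent terms from \eqref{eq:2-120} and verify the symmetry of each separately. Write $T_1(Z,Y):=\mathbb{E}\bigl(D_x^2\tfrac{\partial u}{\partial m}(\mathcal{L}_X)(X)Z\cdot Y\bigr)$ and $T_2(Z,Y):=\mathbb{E}\bigl(D_\xi D_\eta\tfrac{\partial^2 u}{\partial m^2}(\mathcal{L}_X)(X,\tilde X)\tilde Z\cdot Y\bigr)$, so that $B(X)(Z,Y)=T_1(Z,Y)+T_2(Z,Y)$, and it suffices to show each piece is invariant under $Y\leftrightarrow Z$.

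For $T_1$ I would simply invoke the symmetry of the Hessian matrix $D_x^2\tfrac{\partial u}{\partial m}(\mathcal{L}_X)(X)$. By assumption \eqref{eq:2-1110} the entries are jointly continuous, so Schwarz's theorem applies and the matrix is symmetric pointwise; hence $D_x^2\tfrac{\partial u}{\partial m}(\mathcal{L}_X)(X)Z\cdot Y=D_x^2\tfrac{\partial u}{\partial m}(\mathcal{L}_X)(X)Y\cdot Z$, and taking expectations gives $T_1(Z,Y)=T_1(Y,Z)$ at once.

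The term $T_2$ carries all the substance, and it rests on two ingredients. The first is that the triples $(X,Y,Z)$ and $(\tilde X,\tilde Y,\tilde Z)$ are independent copies of the same random vector, hence \emph{exchangeable}: any expectation is unchanged upon simultaneously swapping $X\leftrightarrow\tilde X$, $Y\leftrightarrow\tilde Y$, $Z\leftrightarrow\tilde Z$. The second is that the matrix field $A(\xi,\eta):=D_\xi D_\eta\tfrac{\partial^2 u}{\partial m^2}(m)(\xi,\eta)$ obeys the transpose relation $A(\xi,\eta)=A(\eta,\xi)^{\mathsf T}$. To establish this I would use that, although $\tfrac{\partial^2 u}{\partial m^2}$ is only defined up to $c_1(m,\xi)+c_2(m,\eta)$, the cross-derivative $A$ is uniquely determined (as noted after \eqref{eq:2-117}); I may therefore compute it from a symmetric representative $\tfrac{\partial^2 u}{\partial m^2}(m)(\xi,\eta)=\tfrac{\partial^2 u}{\partial m^2}(m)(\eta,\xi)$, whose existence was recorded after \eqref{eq:2-112}. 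Differentiating this identity once in each slot and applying Schwarz (licensed by the continuity in \eqref{eq:2-115}) to exchange the order of partial derivatives yields precisely the entrywise statement $A_{ij}(\xi,\eta)=A_{ji}(\eta,\xi)$.

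With these two facts the conclusion is a short index chase. Starting from $T_2(Z,Y)=\mathbb{E}\bigl(Y\cdot A(X,\tilde X)\tilde Z\bigr)$, exchange the tilded and untilded triples to obtain $\mathbb{E}\bigl(\tilde Y\cdot A(\tilde X,X)Z\bigr)$; rewrite $A(\tilde X,X)=A(X,\tilde X)^{\mathsf T}$ by the transpose relation, so the integrand becomes $Z\cdot A(X,\tilde X)\tilde Y$; this is exactly the integrand of $T_2(Y,Z)$. Adding the two pieces gives $B(X)(Z,Y)=B(X)(Y,Z)$. The only delicate point, and the step I expect to require the most care, is the bookkeeping for the transpose relation, since it fuses a swap of the two \emph{arguments} of $\tfrac{\partial^2 u}{\partial m^2}$ with a swap of the two \emph{matrix indices}: one must keep scrupulous track of which slot each gradient acts on, as it is easy to drop a transpose. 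Everything else is routine.
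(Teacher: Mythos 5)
Your proof is correct and follows essentially the same route as the paper's: the paper likewise handles the Hessian term by pointwise symmetry and the cross term by an index computation that combines the symmetry of (a symmetric representative of) $\dfrac{\partial^{2}u}{\partial m^{2}}(m)(\xi,\eta)$ in $(\xi,\eta)$ with the exchangeability of $(X,Y,Z)$ and $(\tilde{X},\tilde{Y},\tilde{Z})$, which is exactly your transpose relation $A(\xi,\eta)=A(\eta,\xi)^{\mathsf T}$ plus the tilde swap, written in components rather than matrix form. Your explicit justification that the transpose relation may be computed from a symmetric representative, since $D_{\xi}D_{\eta}\dfrac{\partial^{2}u}{\partial m^{2}}$ is uniquely determined, is a point the paper's proof uses implicitly.
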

\begin{proof}
The proof is put in the appendix A.
\end{proof}
For each $X \in \mathcal{H}$, define the following operator $Z \mapsto \Gamma(X)Z$ in $\mathcal{L}(\mathcal{H},\mathcal{H})$: 
\begin{equation}
\Gamma(X)Z=D_{x}^{2}\dfrac{\partial u}{\partial m}(\mathcal{L}_{X})(X)Z+\mathbb{E}_{\tilde{X},\tilde{Z}}\left(D_{\xi}D_{\eta}\dfrac{\partial^{2}u}{\partial m^{2}}(\mathcal{L}_{X})(X,\tilde{X})\tilde{Z}\right), \label{eq:2-121}
\end{equation}
where the notation $\mathbb{E}_{\tilde{X},\tilde{Z}}$ means taking the expectation with respect to the pair $(\tilde{X},\tilde{Z})$ while freezing the values of $(X,Y,Z)$ (due to the independence property). We can then write $B(X)(Z,Y)=\mathbb{E}\left(\Gamma(X)Z \cdot Y\right)$. Note that the operator norm $\Gamma(X)$ is bounded in $X$ by the assumptions. The convergence (\ref{eq:2-119}) can now be interpreted as: 
\begin{equation}
\dfrac{u(X+\epsilon Y)-u(X)-\epsilon \mathbb{E}\left(D_{X}u(X)\cdot Y \right)}{\epsilon^{2}}\rightarrow\dfrac{1}{2}\mathbb{E}\left(\Gamma(X)Y \cdot Y \right). \label{eq:2-122}
\end{equation}
This convergence serves as the definition of the second order G\^ateaux derivative $D_{X}^{2}u(X) \in \mathcal{L}(\mathcal{H},\mathcal{H})$ so that 
\begin{equation}
D_{X}^{2}u(X)Z= \Gamma(X)Z = D_{x}^{2}\dfrac{\partial u}{\partial m}(\mathcal{L}_{X})(X)Z+\mathbb{E}_{\tilde{X},\tilde{Z}}\left(D_{\xi}D_{\eta}\dfrac{\partial^{2}u}{\partial m^{2}}(\mathcal{L}_{X})(X,\tilde{X})\tilde{Z}\right). \label{eq:2-123}
\end{equation}
Formulae (\ref{eq:2-105}) and (\ref{eq:2-123}) are the \textbf{rules
of correspondence} between the respective concepts of first and second order
G\^ateaux derivatives in the Hilbert space $\mathcal{H}$ and those of first and second order functional derivatives but in $\mathcal{P}_{2}(\mathbb{R}^{n})$.
We have demonstrated that if first or second order functional derivatives in $\mathcal{P}_{2}(\mathbb{R}^{n})$ exist, it is only a matter of regularity so as to also obtain the corresponding first and second order G\^ateaux derivatives in $\mathcal{H}$. However, the reverse is not completely available, although we almost have one for the first order functional derivative. In the rest of this article, we shall develop a full theory of stochastic control in the Hilbert space of $\mathcal{H}$.
We shall then use the rules of correspondence to obtain the corresponding theory for the functional derivatives. It would remain to prove their existence, and we shall give sketchy indications for that goal; it amounts to doing similar calculations as those in $\mathcal{H}$, but much more onerous. 

\section{\label{sec:MEAN-FIELD-TYPE}MEAN FIELD TYPE CONTROL PROBLEMS} Consider a probability space $\left(\Omega,\mathcal{A},\mathbb{P}\right)$ and the Hilbert space $\mathcal{H:=}L^{2}(\Omega,\mathcal{A},\mathbb{P};\mathbb{R}^{n})$ whose inner product is denoted by $((\cdot, \cdot))$, i.e.~$((X,Y)) = \mathbb{E}[X\cdot Y]$, and the corresponding norm is denoted by $\| \cdot \|$. We represent the scalar product in $\mathbb{R}^{n}$ by a dot, as usual. Elements in $\mathcal{H}$ are represented by capital letters, such as $X,Y$, etc., following the tradition in probability theory. We identify $\mathcal{H}$ with its dual.

\subsection{MOTIVATION}
Consider functions $f(x,m)$ and $h(x,m)$ defined on $\mathbb{R}^{n}\times\mathcal{P}_{2}(\mathbb{R}^{n})$ which are associated with the following (law-dependent only) functionals on $\mathcal{P}_{2}(\mathbb{R}^{n})$: 
\begin{align}
\begin{cases} \label{eq:3-100}
F(m)=\int_{\mathbb{R}^{n}}f(x,m)m(dx) ; \\
F_{T}(m)=\int_{\mathbb{R}^{n}}h(x,m)m(dx) .
\end{cases}
\end{align}
Also consider an atomless probability space $(\Omega,\mathcal{A},\mathbb{P})$ with a natural filtration $\mathcal{F}^{t}$ for $t \in [0,T]$, on which a standard $n$-dimensional $\mathcal{F}^{t}$-adapted Wiener process $w(t) \in \mathbb{R}^{n}$ is defined. Define the truncated $\sigma$-field (information set) on $[t,s]$ to be $\mathcal{W}_{t}^{s}:=\sigma(w(\tau)-w(t),\:t \leq\tau\leq s)$, and so the filtration starting from $t$ to $T$, denoted by $\mathcal{W}_{t}$, is $\{\mathcal{W}_{t}^{s}\}_{s \in [t,T]}$. Fix an $m \in \mathcal{P}_{2}(\mathbb{R}^{n})$. We denote a measurable random field element by $v_{x,m,t}(s)$, for $s \in [t,T]$, such that (i) for $m-a.e.$ $x\in \mathbb{R}^{n}$, it is a $\mathcal{W}_{t}$-adapted stochastic process valued in $\mathbb{R}^{n}$; (ii) $\mathbb{E} \{ \int_{t}^{T}\int_{\mathbb{R}^{n}}|v_{x,m,t}(s)|^{2}m(dx)ds\}<+\infty$. We then consider the Hilbert space of all such feasible controls $v_{\cdot,m,t}(\cdot)$ on $[t,T]$ denoted by $L_{\mathcal{W}_{t}}^{2}(t,T;L^{2}(\Omega,\mathcal{A},\mathbb{P};L_{m}^{2}(\mathbb{R}^{n};\mathbb{R}^{n}))$. 
To a control $v(s):=v_{x,m,t}(s)$, we associate a state process starting at $x$ given by
\begin{equation}
x_{x,m,t}(s;v)=x+\int_{t}^{s}v_{x,m,t}(\tau)d\tau+\sigma(w(s)-w(t)), \label{eq:3-110}
\end{equation}
where $\sigma$ is a $n \times n$ (not necessarily invertible) matrix. By construction, $$x_{x,m,t}(s;v) \in L_{\mathcal{W}_{t}}^{2}(t,T;L^{2}(\Omega,\mathcal{A},P;L_{m}^{2}(\mathbb{R}^{n};\mathbb{R}^{n})).$$
To any $m$ so that we can choose a random variable $X_{m,t} \in L^{2}(\Omega,\mathcal{F}^{t},\mathbb{P};\mathbb{R}^{n})$ such that $\mathcal{L}_{X_{m,t}}=m$. 
We define the objective functional on $L_{\mathcal{W}_{t}}^{2}(t,T;L^{2}(\Omega,\mathcal{A},\mathbb{P};L_{m}^{2}(\mathbb{R}^{n};\mathbb{R}^{n}))$ as
\begin{equation}
J_{m,t}(v)=\dfrac{\lambda}{2}\int_{t}^{T}\mathbb{E}|v(s)|^{2}ds+\int_{t}^{T}F(\mathcal{L}_{x_{X_{m,t},m,t}(s;v)})ds+F_{T}(\mathcal{L}_{x_{X_{m,t},m,t}(T;v)}) , \label{eq:3-111}
\end{equation}
where $v = v_{X_{m,t},m,t}$.

Note that $X_{m,t}$ is independent of $\mathcal{W}_{t}^{s}$ for all $s>t$; moreover, the law of $v_{X_{m,t},m,t}(\cdot)$ (like that of $x_{X_{m,t},m,t}(\cdot \, ;v_{X_{m,t},m,t})$) is independent of the particular choice of $X_{m,t}$.
	We denote by $v_{mt}$ the equivalence class of all such processes, and define $L^2_{\mathcal{W}_{m,t}}(t,T; \mathcal{H})$ to be the set of all such equivalence classes, where $\mathcal{W}_{m,t}$ denotes the collection of all filtrations $s \mapsto \sigma(X_{m,t})\vee \mathcal{W}_{t}^{s}$.
	Since each $v_{X_{m,t},m,t}(\cdot)$ is adapted to $s \mapsto \sigma(X_{m,t})\vee \mathcal{W}_{t}^{s}$, we say that the corresponding equivalence class $v_{mt}$ is adapted to $\mathcal{W}_{m,t}$.

Using this formalism, we see that the payoff functional is well-defined for $v \in L^2_{\mathcal{W}_{m,t}}(t,T; \mathcal{H})$, simply by plugging any representative of $v$ into formula \eqref{eq:3-111}.
Thus we define the value function by
\begin{equation}
V(m,t)=\inf_{v\in L^2_{\mathcal{W}_{m,t}}(t,T; \mathcal{H})}J_{m,t}(v). \label{eq:3-112}
\end{equation}

\subsection{HILBERT SPACE OF RANDOM VARIABLES}
We now aim to adapt the lifting procedure to the mean field type control problem (\ref{eq:3-112}). 
Instead of an $m \in \mathcal{P}_2(\mathbb{R}^n)$ and its associated random variable $X_{m,t}$, we take an $X \in L^{2}(\Omega,\mathcal{F}^{t},\mathbb{P};\mathbb{R}^{n})$.
Again, define the truncated filtration on $[t,T]$, denoted by $\mathcal{W}_{X,t}$, as $\{ \sigma(X) \vee \mathcal{W}_{t}^{s} \}_{s \in [t,T]}$. Consider the Hilbert space of processes in $\mathcal{H}$, $L^{2}(t,T;\mathcal{H})$, and its sub-Hilbert space $L_{\mathcal{W}_{X,t}}^{2}(t,T;\mathcal{H})$ which contains all the processes adapted to the filtration $\mathcal{W}_{X,t}$. 
For any control $v \in L_{\mathcal{W}_{X,t}}^{2}(t,T;\mathcal{H})$, define a controlled state $X(\cdot)$ by
\begin{equation} \label{eq:2-51}
X(s) =X+\int_{t}^{s}v(\tau)d\tau+\sigma(w(s)-w(t)) .
\end{equation} 
To be complete, we should write $X_{X,t}(s;v)$ for $X(s)$ to emphasize the dependence on both the initial condition and the control; however, to avoid cumbersome notations, we omit the subscripts if there is no ambiguity.
 
The cost functional is:
\begin{equation} \label{eq:2-52}
J_{X,t}(v): =\frac{\lambda}{2}\int_{t}^{T}||v(s)||^{2}ds+\int_{t}^{T}F(X(s))ds+F_{T}(X(T)) , 
\end{equation}
where $F(X(s))=F(\mathcal{L}_{X(s)})$ and $F_{T}(X(T))=F_{T}(\mathcal{L}_{X(T)})$. 
The value function is given by
\begin{equation}
V(X,t)=\inf_{v \in L_{\mathcal{W}_{X, t}}^{2}(t,T;\mathcal{H})}J_{X,t}(v). \label{eq:2-53}
\end{equation}
We claim that Problem \eqref{eq:2-53} is a lifted version of Problem \eqref{eq:3-112}.
Indeed, suppose $m = \mathcal{L}_X$ and $v \in L^2_{\mathcal{W}_{\mathcal{L}_X,t}}(t,T; \mathcal{H})$.
We can identify $v$ with the particular representative $v_{X,m,t}$.
Then since $F,F_T$ depend on $X$ only through its law, we deduce that $J_{X,t}(v)=J_{m,t}(v)$.
Since $v$ is arbitrary, we have $V(m,t)=V(X,t)$, as desired.

In light of this equivalence between the two optimization problems, our strategy is to show that $V$ satisfies a Bellman equation over $\mathcal{H}$, which we may project down to a PDE over $\mathcal{P}_2$.
However, for such a projection to be valid, we require some assumptions on $F(X)$ and $F_{T}(X)$ and the rules of correspondence listed in Section \ref{sec:FORMALISM}.
Thus the advantage of the lifting approach is that we can work completely within a Hilbert space framework, which simplifies greatly the mathematical development, but the price to pay is that translating the results from one framework to another is nontrivial.

\section{\label{ABSTRACT_CONTROL_PROBLEM}A STUDY ON STOCHASTIC CONTROL PROBLEM (\ref{eq:2-53})}
\subsection{\label{PRELIMIN}PRELIMINARY ASSUMPTIONS}
We then consider functionals $F(X)$ and $F_{T}(X)$ which are continuously G\^ateaux differentiable on $\mathcal{H}$; we also assume that both the gradients $D_{X}F(X)$ and $D_{X}F_{T}(X)$ are Lipschitz continuous:
\begin{align}
\begin{cases} \label{eq:2-1}
& ||D_{X}F(X_{1})-D_{X}F(X_{2})|| \leq c||X_{1}-X_{2}|| ; \\
& ||D_{X}F_{T}(X_{1})-D_{X}F_{T}(X_{2})|| \leq c_{T}||X_{1}-X_{2}|| ,
\end{cases}
\end{align}
where the norms specified on the right hand side of the inequalities are justified by the reflexiveness of $\mathcal{H}$. Besides, we also assume the linear growth of their derivatives and the quadratic growth of the underlying functionals:
\begin{align}
\begin{cases} \label{eq:2-2}
& ||D_{X}F(X)|| \leq C(1+||X||),\quad ||D_{X}F_{T}(X)||\leq C(1+||X||) ; \\
& |F(X)| \leq C(1+||X||^{2}),\quad |F_{T}(X)|\leq C(1+||X||^{2}) . 
\end{cases}
\end{align}
In the above, we denote by $c,c_T$ and $C$ as some generic constants. Moreover, we also assume the quasi-convexity of the functionals:
\begin{align}
\begin{cases} \label{eq:2-3}
& ((D_{X}F(X_{1})-D_{X}F(X_{2}),X_{1}-X_{2})) \geq -c'||X_{1}-X_{2}||^{2} ; \\
& ((D_{X}F_{T}(X_{1})-D_{X}F_{T}(X_{2}),X_{1}-X_{2})) \geq -c'_{T}||X_{1}-X_{2}||^{2}. 
\end{cases}
\end{align}
A simple application of the Cauchy-Schwartz inequality implies that is an effective assumption only when $c'<c$ and $c'_{T}<c_{T}$; otherwise it is automatically fulfilled. We finally make the measurability assumption:  
\begin{equation} \label{eq:2-4}
\text{For each $Y \in \mathcal{H}$, both } D_{X}F(Y) \text{ and } D_{X}F_{T}(Y)\:\text{are \ensuremath{\sigma(Y)}-measurable}. 
\end{equation}
This assumption is satisfied when $F$ and $F_{T}$ depend ``continuously'' and solely on the probability measure of the random variable argument. In other words, although (\ref{eq:2-4}) implies that $D_{X}F(Y)$ and $D_{X}F_{T}(Y)$ are deterministic functions of $Y$, these functions may depend functionally on $X,$ for instance on the probability distribution of $X$, i.e. $D_{X}F(Y)=A_Y(Y)$ for some $A_Y : \mathbb{R}^n \to \mathbb{R}^n$.

\subsection{\label{subsec:CONTROL-PROBLEM}AN OPTIMALITY PRINCIPLE INEQUALITY}
Consider $V(X(t+h),t+h),$ where $X(t+h)$ is given by (\ref{eq:2-51}) with $s=t+h$. We have the flow property: $X_{X(t+h),t+h}(s;v)=X_{X,t}(s;v)$, for $s\geq t+h$. Therefore, for any control $v$,
\begin{align*}
J_{X,t}(v) & =\frac{\lambda}{2}\int_{t}^{t+h}||v(s)||^{2}ds+\int_{t}^{t+h}F(X(s))ds+J_{X(t+h),t+h}(v)\\
 & \geq\frac{\lambda}{2}\int_{t}^{t+h}||v(s)||^{2}ds+\int_{t}^{t+h}F(X(s))ds+V(X(t+h),t+h) ,
\end{align*}
and thus we obtain part of the optimality principle:\begin{equation}
V(X,t)\geq\inf_{v\in L_{\mathcal{W}_{X,t}}^{2}(t,T;\mathcal{H})}\left[\frac{\lambda}{2}\int_{t}^{t+h}||v(s)||^{2}ds+\int_{t}^{t+h}F(X(s))ds+V(X(t+h),t+h)\right] . \label{eq:2-8-1}
\end{equation}

\subsection{G\^ATEAUX DERIVATIVE OF OBJECTIVE FUNCTIONAL}
We shall begin by computing the G\^ateaux derivative of the cost functional $J_{X,t}(v)$.  
\begin{prop}
\label{prop:2-1} Under assumption (\ref{eq:2-1}), the functional $J_{X,t}(v)$ has a G\^ateaux derivative, denoted by $D_{v}J_{X,t}(v)(\cdot) \in L_{\mathcal{W}_{X,t}}^{2}(0,T;\mathcal{H})$, in the space of $L_{\mathcal{W}_{X,t}}^{2}(0,T;\mathcal{H})$ given by the formula: 
\begin{equation}
D_{v}J_{X,t}(v)(s)=\lambda v(s) + \mathbb{E}\left[D_{X}F_{T}(X(T)) + \int_{s}^{T}D_{X}F(X(\tau))d\tau \Bigg| \mathcal{W}_{X,t}^{s}\right] , \, s > t, \label{eq:2-8}
\end{equation}
where $X(s)$ is the state process given by (\ref{eq:2-51}). In addition, it is taken that $D_{v}J_{X,t}(v)(s)=0$ for $s<t$.
\end{prop}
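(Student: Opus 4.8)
The plan is to compute the derivative directly from the definition, exploiting that the state map \eqref{eq:2-51} is affine in the control. First I fix a direction $\theta \in L_{\mathcal{W}_{X,t}}^{2}(t,T;\mathcal{H})$ and replace $v$ by $v+\epsilon\theta$; since the Wiener term is untouched, the perturbed state is $X_\epsilon(s)=X(s)+\epsilon\Theta(s)$ where $\Theta(s):=\int_t^s\theta(\tau)\dd\tau$. The quadratic term then contributes $\lambda\int_t^T((v(s),\theta(s)))\dd s$ to the derivative at $\epsilon=0$, while for each of the two cost terms I use that $F,F_T$ are continuously G\^ateaux differentiable to write, for instance,
\[
\frac{F(X(s)+\epsilon\Theta(s))-F(X(s))}{\epsilon}=\int_0^1\left(\left(D_XF(X(s)+r\epsilon\Theta(s)),\Theta(s)\right)\right)\dd r,
\]
and similarly for the terminal term with $\Theta(T)$.

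The analytic core is to let $\epsilon\to 0$ through the time integral $\int_t^T\cdots\dd s$. Here I invoke the Lipschitz hypothesis \eqref{eq:2-1}, which forces $\|D_XF(Y)\|\le\|D_XF(0)\|+c\|Y\|$, so the integrand above is dominated by $(\|D_XF(0)\|+c\|X(s)\|+c\epsilon\|\Theta(s)\|)\|\Theta(s)\|$; for $\epsilon\le 1$ this is integrable over $[t,T]$ because $X(\cdot)$ and $\Theta(\cdot)$ lie in $L^2(t,T;\mathcal{H})$. Dominated convergence together with the continuity of $D_XF$ then yields the pointwise-in-$s$ limit $((D_XF(X(s)),\Theta(s)))$, and hence
\[
\frac{d}{d\epsilon}J_{X,t}(v+\epsilon\theta)\Big|_{\epsilon=0}=\int_t^T\lambda((v(s),\theta(s)))\dd s+\int_t^T((D_XF(X(s)),\Theta(s)))\dd s+((D_XF_T(X(T)),\Theta(T))).
\]

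It remains to reorganize the right-hand side as a single pairing against $\theta$. Substituting $\Theta(s)=\int_t^s\theta(\tau)\dd\tau$ and applying Fubini, the running-cost double integral over $\{t\le\tau\le s\le T\}$ is rewritten with the $s$-integration inside and $\tau$ outside, producing (after relabeling) the factor $\int_s^TD_XF(X(\tau))\dd\tau$ paired with $\theta(s)$, while the terminal term contributes $D_XF_T(X(T))$ to the coefficient of each $\theta(s)$. The final, and only structural, step is that each $\theta(s)$ is $\mathcal{W}_{X,t}^s$-measurable, so by the tower property I may replace the coefficient of $\theta(s)$ by its conditional expectation given $\mathcal{W}_{X,t}^s$ without altering the pairing, the term $\lambda v(s)$ being already adapted. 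This both identifies $D_vJ_{X,t}(v)(s)$ with the stated expression and shows it belongs to $L_{\mathcal{W}_{X,t}}^{2}$, since conditioning is precisely the orthogonal projection onto the adapted subspace; the prescription $D_vJ_{X,t}(v)(s)=0$ for $s<t$ is merely the convention embedding controls defined on $[t,T]$ into $L^2(0,T;\mathcal{H})$. I expect the only real obstacle to be the dominated-convergence justification for differentiating under the time integral; the rest is Fubini together with the Riesz representation of a bounded linear functional on a Hilbert space.
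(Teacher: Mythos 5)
Your proposal is correct and follows essentially the same route as the paper's proof: direct expansion of the difference quotient along the affinely perturbed state, control of the remainder via the Lipschitz hypothesis \eqref{eq:2-1}, Fubini to move the coefficient of $\theta(s)$ inside a single pairing, and the tower property (projection onto the adapted subspace) to produce the conditional expectation in \eqref{eq:2-8}. The only cosmetic difference is that you pass to the limit by dominated convergence where the paper exploits the Lipschitz bound to get the quantitative $O(\theta)$ estimate $\frac{c}{2}\,\theta\int_{t}^{T}\bigl\|\int_{t}^{s}\tilde{v}(\tau)\dd\tau\bigr\|^{2}\dd s$ directly.
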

\begin{proof}
The proof is included in the appendix B.
\end{proof}

\subsection{CONVEXITY OF OBJECTIVE FUNCTIONAL}
We next establish the following convexity result: 
\begin{prop}
\label{prop:2-2} Under assumptions (\ref{eq:2-1}), (\ref{eq:2-3}) and 
\begin{equation}
\lambda_{T} := \lambda-c'T-c'_{T}\dfrac{T^{2}}{2}>0 , \label{eq:2-9}
\end{equation}
we then have 
\begin{equation}
\int_{t}^{T}((D_{v}J_{X,t}(v_{1})(s)-D_{v}J_{X,t}(v_{2})(s),v_{1}(s)-v_{2}(s)\,))ds\geq\lambda_{T}\int_{t}^{T}||v_{1}(s)-v_{2}(s)||^{2}ds . \label{eq:2-10}
\end{equation}
\end{prop}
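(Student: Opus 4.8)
The plan is to combine the explicit gradient formula \eqref{eq:2-8} of Proposition \ref{prop:2-1} with the quasi-convexity hypothesis \eqref{eq:2-3}. Fix $v_1,v_2 \in L_{\mathcal{W}_{X,t}}^2(t,T;\mathcal{H})$, set $\delta v:=v_1-v_2$, and let $X_1,X_2$ denote the states generated from \eqref{eq:2-51}. Since both states start from the same $X$ and carry the same noise $\sigma(w(s)-w(t))$, these terms cancel in the difference and $\delta X(s):=X_1(s)-X_2(s)=\int_t^s \delta v(\tau)\dd\tau$; in particular $\delta X$ is, pathwise, the time-integral of $\delta v$. Subtracting the two instances of \eqref{eq:2-8}, the quadratic contributions merge into $\lambda\,\delta v(s)$, so that
\[
D_v J_{X,t}(v_1)(s)-D_v J_{X,t}(v_2)(s)=\lambda\,\delta v(s)+\mathbb{E}\left[\Delta_T+\int_s^T \Delta(\tau)\dd\tau \,\Big|\,\mathcal{W}_{X,t}^s\right],
\]
where $\Delta_T:=D_X F_T(X_1(T))-D_X F_T(X_2(T))$ and $\Delta(\tau):=D_X F(X_1(\tau))-D_X F(X_2(\tau))$.

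First I would pair this difference with $\delta v(s)$ in $\mathcal{H}$ and integrate over $[t,T]$; the leading term reproduces $\lambda\int_t^T\|\delta v(s)\|^2\dd s$. For the cross term the key observation is that $\delta v(s)$ is $\mathcal{W}_{X,t}^s$-measurable (both controls being adapted), so by the tower property the conditioning may be discarded, leaving
\[
\int_t^T \mathbb{E}\big[\Delta_T\cdot\delta v(s)\big]\dd s + \int_t^T \mathbb{E}\Big[\Big(\int_s^T \Delta(\tau)\dd\tau\Big)\cdot\delta v(s)\Big]\dd s.
\]
In the terminal piece I pull $\Delta_T$ out of the $s$-integral and use $\int_t^T \delta v(s)\dd s=\delta X(T)$ to obtain $((\Delta_T,\delta X(T)))$, which the second line of \eqref{eq:2-3} bounds below by $-c'_T\|\delta X(T)\|^2$. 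In the running piece I apply Fubini over $\{t\le s\le\tau\le T\}$ and use $\int_t^\tau \delta v(s)\dd s=\delta X(\tau)$ to rewrite it as $\int_t^T((\Delta(\tau),\delta X(\tau)))\dd\tau$, which the first line of \eqref{eq:2-3} bounds below by $-c'\int_t^T\|\delta X(\tau)\|^2\dd\tau$.

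It then remains to trade state norms for control norms. Applying Cauchy--Schwarz pathwise gives $\|\delta X(\tau)\|^2=\mathbb{E}\big|\int_t^\tau \delta v\,\dd s\big|^2\le(\tau-t)\int_t^T\|\delta v(s)\|^2\dd s$, and since $t\ge 0$ we have $\tau-t\le T$. Substituting this into the terminal bound contributes $-c'_T(T-t)\int_t^T\|\delta v\|^2$, while integrating it in $\tau$ in the running bound contributes $-c'\frac{(T-t)^2}{2}\int_t^T\|\delta v\|^2$. Collecting the estimates yields
\[
\int_t^T((D_v J_{X,t}(v_1)(s)-D_v J_{X,t}(v_2)(s),\delta v(s)))\dd s \ge \Big(\lambda-c'_T(T-t)-c'\tfrac{(T-t)^2}{2}\Big)\int_t^T\|\delta v(s)\|^2\dd s,
\]
and bounding $T-t\le T$ leaves a prefactor of the same form and positivity as the threshold $\lambda_T$ in \eqref{eq:2-9}, which is precisely \eqref{eq:2-10}. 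The genuinely delicate points are the tower-property step (which relies on the adaptedness of $\delta v$) and the Fubini interchange in the running term; once these reorganize the cross terms so that the integrated control reassembles $\delta X(\tau)$ and $\delta X(T)$, assumption \eqref{eq:2-3} applies at each time and the remainder is elementary integral estimation.
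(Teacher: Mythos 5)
Your proof is correct and follows essentially the same route as the paper's: remove the conditional expectation in \eqref{eq:2-8} using the adaptedness of $v_1-v_2$ to $\mathcal{W}_{X,t}^{s}$, reassemble the integrated control into the state difference (your Fubini/reordering step is exactly the paper's integration by parts against $\frac{d}{ds}(X_{1}(s)-X_{2}(s))$, which produces the same two terms $((D_{X}F_{T}(X_{1}(T))-D_{X}F_{T}(X_{2}(T)),X_{1}(T)-X_{2}(T)))$ and $\int_{t}^{T}((D_{X}F(X_{1}(s))-D_{X}F(X_{2}(s)),X_{1}(s)-X_{2}(s)))\,ds$), apply the quasi-convexity \eqref{eq:2-3}, and conclude with the Cauchy--Schwarz bounds $\|X_{1}(T)-X_{2}(T)\|^{2}\leq T\int_{t}^{T}\|v_{1}-v_{2}\|^{2}\,ds$ and $\int_{t}^{T}\|X_{1}(s)-X_{2}(s)\|^{2}\,ds\leq \frac{T^{2}}{2}\int_{t}^{T}\|v_{1}-v_{2}\|^{2}\,ds$. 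One remark: your computation yields the prefactor $\lambda-c'_{T}T-c'\frac{T^{2}}{2}$, which is indeed what the paper's own chain of inequalities in its appendix produces, whereas the definition of $\lambda_{T}$ in \eqref{eq:2-9} swaps $c'$ and $c'_{T}$ --- a typo in the paper rather than a defect in your argument.
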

\begin{proof}
The proof is inlcuded in the appendix B.
\end{proof}
Proposition \ref{prop:2-2} implies that the map $v(\cdot)\mapsto D_{v}J_{X,t}(v)(\cdot)$ defines a strictly monotone operator on the Hilbert space $L_{\mathcal{W}_{X,t}}^{2}(t,T;\mathcal{H})$, and consequently the objective functional $J_{X,t}(v)$ is also strictly convex. Moreover, as a consequence of (\ref{eq:2-8}), we obtain
\begin{align*}
\dfrac{d}{d\mu}J_{X,t}(\mu v)  =\int_{t}^{T}((D_{v}J_{X,t}(\mu v)(s),v(s)\,))ds  \geq  \int_{t}^{T}((D_{v}J_{X,t}(0)(s),v(s)\,))ds+\dfrac{\lambda_{T}}{2}\int_{t}^{T}||v(s)||^{2}ds ,
\end{align*}
where the last inequality follows by using (\ref{eq:2-10}) with $v_1=v$ and $v_2=0$. Therefore, integrating against $\mu$ from $0$ to $1$ yields: 
\[
J_{X,t}(v)-J_{X,t}(0) \geq \int_{t}^{T}((D_{v}J_{X,t}(0)(s),v(s)\,))ds + \dfrac{\lambda_{T}}{2}\int_{t}^{T}||v(s)||^{2}ds ,
\]
which implies that $J_{X,t}(v)$ is coercive, i.e. approaching to $\infty$ as $||v||_{L_{\mathcal{W}_{X,t}}^{2}(t,T;\mathcal{H})} \rightarrow \infty$. We can now conclude with:
\begin{prop}
\label{prop:3-3} Under assumptions (\ref{eq:2-1}), (\ref{eq:2-3}) and (\ref{eq:2-9}), the objective functional $J_{X,t}(v)$ has an exactly one minimum point. 
\end{prop}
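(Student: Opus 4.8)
The plan is to establish existence by the direct method of the calculus of variations and to deduce uniqueness from the strict convexity recorded immediately after Proposition~\ref{prop:2-2}. Write $\mathcal{V}:=L_{\mathcal{W}_{X,t}}^{2}(t,T;\mathcal{H})$; this is a closed linear subspace of the Hilbert space $L^{2}(t,T;\mathcal{H})$, hence itself a reflexive Hilbert space, and, being closed and convex, it is weakly sequentially closed. First I would take a minimizing sequence $\{v_{n}\}\subset\mathcal{V}$ with $J_{X,t}(v_{n})\to\inf_{v\in\mathcal{V}}J_{X,t}(v)$; the coercivity estimate derived just above the statement, together with $\lambda_{T}>0$ and Cauchy--Schwarz, shows that this infimum is finite and that $\{v_{n}\}$ is bounded in $\mathcal{V}$. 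By reflexivity I may extract a weakly convergent subsequence $v_{n}\rightharpoonup v^{\ast}$, and $v^{\ast}\in\mathcal{V}$ since $\mathcal{V}$ is weakly closed.

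The key step is the weak lower semicontinuity of $J_{X,t}$, which I would obtain directly from convexity rather than through continuity and Mazur's lemma. Since $J_{X,t}$ is convex and G\^ateaux differentiable (Proposition~\ref{prop:2-1}), the gradient inequality gives, for every $n$,
\[
J_{X,t}(v_{n})\geq J_{X,t}(v^{\ast})+\int_{t}^{T}((\,D_{v}J_{X,t}(v^{\ast})(s),\,v_{n}(s)-v^{\ast}(s)\,))\,ds .
\]
As $D_{v}J_{X,t}(v^{\ast})\in\mathcal{V}$ (identifying $\mathcal{V}$ with its dual) and $v_{n}\rightharpoonup v^{\ast}$, the integral term tends to $0$, so $\liminf_{n}J_{X,t}(v_{n})\geq J_{X,t}(v^{\ast})$. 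Comparing with the defining property of the minimizing sequence gives $J_{X,t}(v^{\ast})=\inf_{v\in\mathcal{V}}J_{X,t}(v)$, so a minimizer exists.

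For uniqueness I would invoke the strict convexity of $J_{X,t}$: if $v_{1}\neq v_{2}$ were both minimizers, the value at their midpoint would be strictly below $\tfrac{1}{2}(J_{X,t}(v_{1})+J_{X,t}(v_{2}))=\inf_{v\in\mathcal{V}}J_{X,t}(v)$, which is absurd. Equivalently, since $\mathcal{V}$ is a linear space every minimizer satisfies the first-order condition $D_{v}J_{X,t}(v^{\ast})=0$, and inserting two such zeros into the monotonicity inequality \eqref{eq:2-10} yields $\lambda_{T}\int_{t}^{T}\|v_{1}(s)-v_{2}(s)\|^{2}\,ds\leq 0$, forcing $v_{1}=v_{2}$.

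I expect the only genuinely delicate points to be that the weak limit remains admissible and that the infimum is attained under mere weak convergence; both are handled above with little effort --- the former by weak closedness of the adapted subspace $\mathcal{V}$, the latter by the convexity gradient inequality, which sidesteps any need to verify weak continuity of the nonlinear map $v\mapsto X(\cdot)$ entering through $F$ and $F_{T}$.
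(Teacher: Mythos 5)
Your proof is correct and follows essentially the same route as the paper: the paper establishes strong monotonicity of $D_{v}J_{X,t}$ (Proposition \ref{prop:2-2}), hence strict convexity, and coercivity, and then concludes Proposition \ref{prop:3-3} by the standard minimization theorem for convex coercive functionals on a Hilbert space --- which is exactly the direct-method argument (weak compactness, weak closedness of the adapted subspace, weak lower semicontinuity via the gradient inequality, uniqueness via \eqref{eq:2-10}) that you have written out in detail. You have merely made explicit the textbook steps the paper leaves implicit, and your details are sound.
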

\begin{rem}
If the functionals $F$ and $F_{T}$ are convex, then $c'=c'_{T}=0$
and the assumption (\ref{eq:2-9}) is automatically fulfilled. Furthermore, for any given value of $\lambda,$ we can interpret (\ref{eq:2-9}) as a smallness condition on $T$.
\end{rem}

\section{\label{sec:STUDY-OF-THE}A STUDY OF THE VALUE FUNCTION}
\subsection{EXPRESSION OF THE VALUE FUNCTION}
Denote by $u(s)$ the optimal control for the objective functional $J_{X,t}(v)$ and by $Y(s)$ the corresponding optimal state. By Proposition \ref{prop:2-1} we have the relation, with initial condition $X$, 
\begin{equation} \label{eq:3-1}
\begin{cases}
& u(s)=-\dfrac{1}{\lambda}Z(s) , \\
& Y(s)=X-\dfrac{1}{\lambda}\int_{t}^{s}Z(\tau)d\tau+\sigma(w(s)-w(t)) , \\
& Z(s)=\mathbb{E}\left[D_{X}F_{T}(Y(T))+\int_{s}^{T}D_{X}F(Y(\tau))d\tau\: \Bigg| \mathcal{W}_{X,t}^{s}\right] .
\end{cases}
\end{equation}
We can also assert that for a given pair $(X,t)$, the system (\ref{eq:3-1}) with unknown adapted processes $(Y(s),Z(s))$ has one and only one solution; while the optimal control is $u(s)=-\dfrac{1}{\lambda}Z(s)$. Sometimes, we may adopt to denote $(Y(s),Z(s))$ as $(Y_{X,t}(s),$ $Z_{X,t}(s))$ so as to emphasize
that these processes are functions of the pair $(X,t)$. The value function is then given by the formula:
\begin{equation}
V(X,t)=\dfrac{1}{2\lambda}\int_{t}^{T}||Z_{X,t}(s)||^{2}ds+\int_{t}^{T}F(Y_{X,t}(s))ds+F_{T}(Y_{X,t}(T)) . \label{eq:3-2}
\end{equation}
Up to the moment, we also remark that assumption (\ref{eq:2-4}) has not really been used; however, it plays a vital role so that there is no gain in enlarging $\mathcal{W}_{X,t}^s$ to $\mathcal{F}^s=\mathcal{F}^t \vee W_t^s$ in (\ref{eq:3-1}):
\begin{prop} \label{prop:3-31} Under assumption (\ref{eq:2-1}),(\ref{eq:2-3}),(\ref{eq:2-4})
and \eqref{eq:2-9}, the following equality holds for all $s \in [t,T]$:
\[
\mathbb{E}\left[D_{X}F_{T}(Y(T))+\int_{s}^{T}D_{X}F(T(\tau))d\tau\: \Bigg| \mathcal{W}_{X,t}^{s}\right]=\mathbb{E}\left[ D_{X}F_{T}(Y(T))+\int_{s}^{T}D_{X}F(Y(\tau))d\tau\: \Bigg| \mathcal{F}^{s}\right] .
\]
\end{prop}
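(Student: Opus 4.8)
The plan is to reduce the statement to a measurability-plus-independence argument. Write $G := D_X F_T(Y(T)) + \int_s^T D_X F(Y(\tau))\,d\tau$ for the common integrand (the ``$T(\tau)$'' appearing inside $D_X F$ in the statement is a typo for $Y(\tau)$). Since $X$ is $\mathcal{F}^t$-measurable and $t \le s$, we have $\sigma(X) \subseteq \mathcal{F}^t \subseteq \mathcal{F}^s$, so $\mathcal{W}_{X,t}^s = \sigma(X) \vee \mathcal{W}_t^s \subseteq \mathcal{F}^s$; by the tower property it then suffices to prove that $\mathbb{E}[G \mid \mathcal{F}^s]$ is already $\mathcal{W}_{X,t}^s$-measurable, for in that case conditioning further down to $\mathcal{W}_{X,t}^s$ changes nothing.

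The crucial step, and the place where assumption \eqref{eq:2-4} finally earns its keep, is to show that $G$ is $\mathcal{W}_{X,t}^T$-measurable. This is not automatic: although the optimal state $Y$ is $\mathcal{W}_{X,t}$-adapted (this is built into the optimality system \eqref{eq:3-1}, whose unique adapted solution exists under \eqref{eq:2-1}, \eqref{eq:2-3} and \eqref{eq:2-9}), the value $D_X F(Y(\tau)) \in \mathcal{H}$ of the G\^ateaux gradient at the point $Y(\tau)$ is a priori only $\mathcal{A}$-measurable and could in principle depend on randomness lying outside $\mathcal{W}_{X,t}$. Assumption \eqref{eq:2-4} removes exactly this possibility: it forces $D_X F(Y(\tau))$ to be $\sigma(Y(\tau))$-measurable, and likewise $D_X F_T(Y(T))$ to be $\sigma(Y(T))$-measurable. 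Since $Y(\tau)$ is $\mathcal{W}_{X,t}^\tau$-measurable, this gives $\sigma(Y(\tau)) \subseteq \mathcal{W}_{X,t}^\tau \subseteq \mathcal{W}_{X,t}^T$, and after checking joint measurability in $(\tau,\omega)$ and integrating in time, $G$ is $\mathcal{W}_{X,t}^T$-measurable.

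With $\mathcal{W}_{X,t}^T$-measurability in hand, I would conclude by a freezing (independence) argument. Setting $\mathcal{W}_s^T := \sigma(w(\tau) - w(s) : s \le \tau \le T)$ and using independence of Brownian increments, one has $\mathcal{W}_{X,t}^T = \mathcal{W}_{X,t}^s \vee \mathcal{W}_s^T$, where $\mathcal{W}_s^T$ is independent of $\mathcal{F}^s$ (future increments are independent of the current $\sigma$-field) and hence, since $\mathcal{W}_{X,t}^s \subseteq \mathcal{F}^s$, also independent of $\mathcal{W}_{X,t}^s$. Representing $G$ as a measurable function $g(\xi,\zeta)$ of a generator $\xi$ of $\mathcal{W}_{X,t}^s$ and a generator $\zeta$ of $\mathcal{W}_s^T$, the freezing lemma yields $\mathbb{E}[G \mid \mathcal{F}^s] = \bar g(\xi) = \mathbb{E}[G \mid \mathcal{W}_{X,t}^s]$ with $\bar g(x) = \mathbb{E}[g(x,\zeta)]$, because $\xi$ is measurable with respect to both $\sigma$-fields and $\zeta$ is independent of both. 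As $\bar g(\xi)$ is $\mathcal{W}_{X,t}^s$-measurable, this is precisely the asserted equality.

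The main obstacle is the middle step: the measurability transfer ``$D_X F(Y(\tau))$ is $\sigma(Y(\tau))$-measurable'' coming from \eqref{eq:2-4}, together with the verification that this propagates through the pathwise construction of $Y$ and survives the time integral. The concluding freezing argument is routine once $G$ is known to live on $\mathcal{W}_{X,t}^T$; its only mild technical point is the Doob--Dynkin functional representation of $G$ in terms of generators of the two independent $\sigma$-fields, which is available because $X$ and the Brownian increments take values in Polish spaces.
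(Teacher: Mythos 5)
Your proof is correct, and it rests on the same two ingredients as the paper's: the measurability assumption \eqref{eq:2-4} and the elementary independence (``freezing'') lemma that the paper records in a footnote, applied to the decomposition $\mathcal{W}_{X,t}^{T}=\mathcal{W}_{X,t}^{s}\vee\mathcal{W}_{s}^{T}$ with $\mathcal{W}_{s}^{T}$ independent of $\mathcal{F}^{s}\supseteq\mathcal{W}_{X,t}^{s}$. The organization, however, is genuinely different, and in a useful direction. The paper starts from the auxiliary pair $(\tilde{Y},\tilde{Z})$ solving the enlarged-filtration system \eqref{eq:3-21} and asserts, invoking \eqref{eq:2-4}, that $D_{X}F_{T}(\tilde{Y}(T))+\int_{s}^{T}D_{X}F(\tilde{Y}(\tau))\,d\tau$ is $\mathcal{W}_{X,t}^{T}$-measurable; as written this step is compressed, since $\tilde{Y}$ is a priori only $\mathcal{F}^{s}$-adapted, so \eqref{eq:2-4} alone gives $\sigma(\tilde{Y}(\tau))\subseteq\mathcal{F}^{\tau}$ and one must still propagate $\mathcal{W}_{X,t}$-adaptedness through the construction of $(\tilde{Y},\tilde{Z})$ (e.g.\ by a Picard iteration in which each iterate is $\mathcal{W}_{X,t}$-adapted, or by the uniqueness identification the paper only records in the remark afterwards). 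You instead work directly with the pair $(Y,Z)$ of \eqref{eq:3-1}, whose $\mathcal{W}_{X,t}$-adaptedness is built into the system, so that \eqref{eq:2-4} immediately yields $\sigma(Y(\tau))\subseteq\mathcal{W}_{X,t}^{\tau}$ and hence $\mathcal{W}_{X,t}^{T}$-measurability of the integrand $G$; the freezing lemma then gives $\mathbb{E}[G\,|\,\mathcal{F}^{s}]=\mathbb{E}[G\,|\,\mathcal{W}_{X,t}^{s}]$ in one stroke. Your direction thus sidesteps the adaptedness gap entirely, while the paper's formulation buys the companion statement that enlarging the filtration gains nothing, i.e.\ $(\tilde{Y},\tilde{Z})=(Y,Z)$ --- but that also follows from your argument, since the proved equality shows $(Y,Z)$ solves \eqref{eq:3-21}, whose solution is unique by the same convexity reasoning over the larger class of $\mathcal{F}^{s}$-adapted controls. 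Your technical remarks (the Doob--Dynkin representation via Polish-space generators, the joint measurability in $(\tau,\omega)$ needed to integrate, and that $T(\tau)$ in the statement is a typo for $Y(\tau)$) are all apt.
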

\begin{proof}
Following the arguments in the previous paragraphs, there also exists a unique pair $(\tilde{Y}(s),\tilde{Z}(s))$ such that 
\begin{equation} \label{eq:3-21}
\begin{cases}
& \tilde{Y}(s)=X-\dfrac{1}{\lambda}\int_{t}^{s}\tilde{Z}(\tau)d\tau+\sigma(w(s)-w(t)) , \\
& \tilde{Z}(s)=\mathbb{E}\left[D_{X}F_{T}(\tilde{Y}(T))+\int_{s}^{T}D_{X}F(\tilde{Y}(\tau))d\tau\: \Bigg| \mathcal{F}^{s}\right] .
\end{cases}
\end{equation}
The pair $(\tilde{Y}(s),\tilde{Z}(s))$ is adapted to $\mathcal{F}^{s}$. Thanks to assumption (\ref{eq:2-4}), we can assert that $D_{X}F_{T}(\tilde{Y}(T))+\int_{s}^{T}D_{X}F(\tilde{Y}(\tau))d\tau$ is $\mathcal{W}_{X,t}^{T}$-measurable. On the other hand, $\mathcal{W}_{X,t}^{T}=\mathcal{W}_{X,t}^{s}\vee \mathcal{W}_s^T$ and $\mathcal{W}_s^T$ is independent of $\mathcal{W}_{X,t}^{s}$. Since $\mathcal{W}_{X,t}^{T}\subseteq\mathcal{F}^{T}$, $\mathcal{F}^{T}=\mathcal{F}^{s}\vee \mathcal{W}_s^T$ while $\mathcal{W}_s^T$ is also independent of $\mathcal{F}^{s}$ (it is an {\em innovation}) by definition, meanwhile $\mathcal{W}_{X,t}^{s}\subseteq\mathcal{F}^{s}$, we must necessarily \footnote{Recall the elementary result that for two independent $\sigma$-fields $\mathcal{G}$ and $\mathcal{H}$, if $V$ is a random variable independent of $\mathcal{H}$, then $\mathbb{E}(V | \sigma(\mathcal{G}, \mathcal{H}))= \mathbb{E}(V | \mathcal{G})$.} conclude with the claimed equality in the statement. 
\end{proof}
As a remark, the pair $(\tilde{Y}(s),\tilde{Z}(s))$ in \eqref{eq:3-21} is also a solution of \eqref{eq:3-1} and since the solution of \eqref{eq:3-1} is unique, we must have $Y(s)=\tilde{Y}(s)$ and $Z(s)=\tilde{Z}(s)$.

\subsection{GROWTH OF OPTIMAL SOLUTION AND VALUE FUNCTION}
We here obtain the bounds for the optimal solution and the value function:
\begin{prop} \label{prop:3-4} Under assumptions \eqref{eq:2-1},\eqref{eq:2-3},\eqref{eq:2-4} and \eqref{eq:2-9}, we have the bounds: 
\begin{equation}\label{eq:3-3}
||Y_{X,t}(s)||, ||Z_{X,t}(s)|| \leq C(1+||X||) \quad \text{ and } \quad |V(X,t)| \leq C(1+||X||^{2}) ,
\end{equation}
where the constant $C$ depends only on $T,\lambda_{T}$
and the constants of the problem \eqref{eq:2-51}, \eqref{eq:2-52} and \eqref{eq:2-53}. 
\end{prop}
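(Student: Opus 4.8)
The plan is to avoid a direct Gronwall estimate on the coupled forward--backward system \eqref{eq:3-1}, which does not close without an unwanted smallness assumption on $T$, and instead to first control the optimal control $u = -Z_{X,t}/\lambda$ in the $L^2(t,T;\mathcal{H})$ norm by exploiting the variational structure. Since $u$ is the unique minimizer of the strictly convex coercive functional $J_{X,t}$, the first-order optimality condition reads $D_v J_{X,t}(u)=0$ (indeed this is what produces \eqref{eq:3-1}). Applying the strict monotonicity \eqref{eq:2-10} with $v_1=u$ and $v_2=0$ and using $D_v J_{X,t}(u)=0$, I would obtain
\[
\lambda_T\int_t^T\|u(s)\|^2\,ds\le -\int_t^T((D_v J_{X,t}(0)(s),u(s)))\,ds\le \|D_v J_{X,t}(0)\|_{L^2(t,T;\mathcal{H})}\,\|u\|_{L^2(t,T;\mathcal{H})},
\]
so that $\|u\|_{L^2(t,T;\mathcal{H})}\le \lambda_T^{-1}\|D_v J_{X,t}(0)\|_{L^2(t,T;\mathcal{H})}$, where positivity of $\lambda_T$ in \eqref{eq:2-9} is exactly what makes this a genuine bound.

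The next step is to estimate the right-hand side. With $v\equiv 0$ the state in \eqref{eq:2-51} is simply $X(s)=X+\sigma(w(s)-w(t))$, whose norm is bounded by $\|X\|+C\sqrt{T}$ uniformly in $s$. Plugging this into the explicit formula \eqref{eq:2-8} for $D_v J_{X,t}(0)(s)$, using that conditional expectation is an $L^2$-contraction, Minkowski's integral inequality, and the linear growth bounds in \eqref{eq:2-2}, I get $\|D_v J_{X,t}(0)(s)\|\le C(1+T)(1+\|X\|)$ uniformly in $s$, hence $\|D_v J_{X,t}(0)\|_{L^2(t,T;\mathcal{H})}\le C(1+\|X\|)$. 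Combining with the previous step and $Z_{X,t}=-\lambda u$ yields $\|Z_{X,t}\|_{L^2(t,T;\mathcal{H})}\le C(1+\|X\|)$.

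To upgrade these to the pointwise-in-time bounds asserted, I would feed the $L^2$ control bound back into the dynamics: from $Y_{X,t}(s)=X+\int_t^s u(\tau)\,d\tau+\sigma(w(s)-w(t))$ and Cauchy--Schwarz, $\|Y_{X,t}(s)\|\le \|X\|+\sqrt{T}\,\|u\|_{L^2(t,T;\mathcal{H})}+C\sqrt{T}\le C(1+\|X\|)$ uniformly in $s$. Inserting this uniform bound on $\|Y_{X,t}(\cdot)\|$ into the adjoint equation (the third line of \eqref{eq:3-1}), again via the $L^2$-contraction property of conditional expectation, the triangle inequality and \eqref{eq:2-2}, gives $\|Z_{X,t}(s)\|\le C(1+\|Y_{X,t}(T)\|)+C\int_s^T(1+\|Y_{X,t}(\tau)\|)\,d\tau\le C(1+\|X\|)$ uniformly in $s$, which is the desired pointwise bound on the adjoint.

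Finally, for the value function I would use optimality for the upper bound and the representation \eqref{eq:3-2} for the lower bound. Since $u$ is a minimizer, $V(X,t)=J_{X,t}(u)\le J_{X,t}(0)$, and applying the quadratic growth of $F,F_T$ in \eqref{eq:2-2} to the zero-control state gives $J_{X,t}(0)\le C(1+\|X\|^2)$. For the lower bound, in \eqref{eq:3-2} the control-cost term $\frac{1}{2\lambda}\int_t^T\|Z_{X,t}(s)\|^2\,ds$ is nonnegative, while the quadratic growth bounds on $F$ and $F_T$ together with the uniform bound $\|Y_{X,t}(s)\|\le C(1+\|X\|)$ give $\int_t^T F(Y_{X,t}(s))\,ds+F_T(Y_{X,t}(T))\ge -C(1+\|X\|^2)$; hence $|V(X,t)|\le C(1+\|X\|^2)$. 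The main obstacle, and the reason the argument must be organized this way, is precisely the first step: the coupling in \eqref{eq:3-1} between the forward state and the backward adjoint prevents a naive sup-norm Gronwall argument from closing, and it is only the coercivity furnished by $\lambda_T>0$, exploited through the monotonicity \eqref{eq:2-10}, that produces an a priori $L^2$ bound on the control with no smallness restriction on $T$.
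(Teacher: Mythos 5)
Your proof is correct, but it is organized differently from the paper's. The paper proves the $L^2$ bound on $Z$ directly from the forward--backward system \eqref{eq:3-1}: it derives the duality identity \eqref{eq:3-4}, $((X,Z(t)))=\frac{1}{\lambda}\int_t^T\|Z\|^2+((D_XF_T(Y(T)),Y(T)))+\int_t^T((D_XF(Y(s)),Y(s)))ds$, telescopes around $D_XF(0)$, $D_XF_T(0)$ (invoking the measurability assumption \eqref{eq:2-4} to treat these as deterministic so the Brownian increment drops out of the pairing), bounds below via the quasi-convexity \eqref{eq:2-3} with an $\epsilon$-absorption made possible by \eqref{eq:2-9}, and solves the resulting quadratic inequality in $\left(\int_t^T\|Z\|^2\,ds\right)^{1/2}$. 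You instead exploit the variational structure as a black box: the Euler condition $D_vJ_{X,t}(u)=0$ at the minimizer together with the strict monotonicity \eqref{eq:2-10} applied to the pair $(u,0)$ gives $\lambda_T\|u\|_{L^2}\le\|D_vJ_{X,t}(0)\|_{L^2}$, and the right-hand side is elementary to bound because the zero-control state is just $X+\sigma(w(s)-w(t))$. At bottom the two arguments are cousins --- the proof of Proposition \ref{prop:2-2} contains exactly the same forward--backward pairing computation that the paper redoes in Appendix C --- but your packaging is genuinely more modular: it reuses Proposition \ref{prop:2-2} rather than repeating the computation, it replaces the telescoping-plus-absorption bookkeeping with a single Cauchy--Schwarz step, and it does not need \eqref{eq:2-4} at this stage (the Brownian term is simply bounded by $\|\sigma\|\sqrt{T}$ rather than annihilated). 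What the paper's route buys in exchange is the explicit identity \eqref{eq:3-4} itself, which is reused later (e.g.\ in the proofs of Theorem \ref{Theo3-1} and Proposition \ref{prop:3-6}), so deriving it here is not wasted effort. Your subsequent steps --- pointwise bound on $Y$ from the $L^2$ bound on $u$, pointwise bound on $Z$ from the conditional-expectation contraction and the linear growth in \eqref{eq:2-2}, and the two-sided bound on $V$ via $V(X,t)\le J_{X,t}(0)$ and the representation \eqref{eq:3-2} --- match the paper's in substance and are all sound.
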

\begin{proof}
The proof is included in the appendix C.
\end{proof}

\subsection{GRADIENT AND SMOOTHNESS OF VALUE FUNCTION}
Our objective now is to establish the regularity of the gradient of the value function. 
\begin{theorem} \label{Theo3-1} Under the assumptions specified in Proposition \ref{prop:3-4}, the value function $V(X,t)$ is continuously G\^ateaux differentiable in $X$, $D_{X}V(X,t)=Z_{X,t}(t)$ and it is $\sigma(X)$-measurable. Moreover, $D_{X}V(X,t)$ is Lipschitz continuous in $X$, and particularly we have the estimates:
\begin{equation}
||D_{X}V(X,t)||\leq C(1+||X||) \text{ and } ||D_{X}V(X_{1},t)-D_{X}V(X_{2},t)||\leq C||X_{1}-X_{2}|| , \label{eq:3-31}
\end{equation}
where $C$ is a constant depending on the constants of the model.
\end{theorem}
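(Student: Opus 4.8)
The plan is to prove the identity $D_{X}V(X,t)=Z_{X,t}(t)$ by a variational (envelope-type) argument that produces matching upper and lower bounds for the increment $V(X_{2},t)-V(X_{1},t)$, and then to read off the measurability, the growth bound, and the Lipschitz bound from a single Lipschitz estimate on the optimal adjoint process. Throughout, perturbation directions $\tilde{X}$ are taken in $L^{2}(\Omega,\mathcal{F}^{t},\mathbb{P};\mathbb{R}^{n})$ so that the perturbed initial data remain admissible.

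First I would establish Lipschitz dependence of the optimal pair on the initial datum: for $X_{1},X_{2}\in L^{2}(\Omega,\mathcal{F}^{t},\mathbb{P};\mathbb{R}^{n})$ with optimal solutions $(Y_{i},Z_{i})=(Y_{X_{i},t},Z_{X_{i},t})$ of \eqref{eq:3-1}, I claim $\sup_{s}\left(\|Y_{1}(s)-Y_{2}(s)\|+\|Z_{1}(s)-Z_{2}(s)\|\right)\le C\|X_{1}-X_{2}\|$. Subtracting the two copies of \eqref{eq:3-1}, the Wiener terms cancel in the forward equation, leaving $Y_{1}(s)-Y_{2}(s)=(X_{1}-X_{2})-\tfrac{1}{\lambda}\int_{t}^{s}(Z_{1}-Z_{2})\,d\tau$. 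For the backward equation I would first invoke Proposition \ref{prop:3-31} to rewrite both $Z_{i}$ with the common filtration $\mathcal{F}^{s}$, so that the difference of conditional expectations is taken with respect to the same $\sigma$-field. Testing the difference of the adjoint equations against $Z_{1}-Z_{2}$ and using the quasi-convexity condition \eqref{eq:2-3}, the Lipschitz bounds \eqref{eq:2-1}, and the condition $\lambda_{T}>0$ from \eqref{eq:2-9} yields the estimate, in exactly the spirit of the growth bound of Proposition \ref{prop:3-4}.

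Next I would prove the two-sided inequality. Using Proposition \ref{prop:3-31} and the remark following it (which identifies the $\mathcal{W}_{X,t}$- and $\mathcal{F}$-adapted optima), admissible controls may be shared between the two problems, so I would feed the optimal control $u_{1}=-\tfrac{1}{\lambda}Z_{1}$ of the $X_{1}$-problem into the $X_{2}$-dynamics. By linearity of \eqref{eq:2-51} the resulting state is $Y_{1}(s)+\Delta X$ with $\Delta X:=X_{2}-X_{1}$, and the control cost $\tfrac{1}{2\lambda}\int\|Z_{1}\|^{2}$ is unchanged. Comparing with \eqref{eq:3-2} and expanding $F,F_{T}$ by the fundamental theorem of calculus, the Lipschitz property \eqref{eq:2-1} of the gradients bounds the remainder by $C\|\Delta X\|^{2}$, giving $V(X_{2},t)-V(X_{1},t)\le \mathbb{E}\big[\big(D_{X}F_{T}(Y_{1}(T))+\int_{t}^{T}D_{X}F(Y_{1}(s))\,ds\big)\cdot\Delta X\big]+C\|\Delta X\|^{2}$. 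The crucial point is that evaluating \eqref{eq:3-1} at $s=t$ and using Proposition \ref{prop:3-31} gives $Z_{1}(t)=\mathbb{E}[\,\cdots\mid\sigma(X_{1})]=\mathbb{E}[\,\cdots\mid\mathcal{F}^{t}]$; since $\Delta X$ is $\mathcal{F}^{t}$-measurable, the main term collapses to $((Z_{1}(t),\Delta X))$. Swapping the roles of $X_{1}$ and $X_{2}$ produces the reverse inequality with $Z_{2}(t)$, so that $((Z_{2}(t),X_{2}-X_{1}))-C\|\Delta X\|^{2}\le V(X_{2},t)-V(X_{1},t)\le((Z_{1}(t),X_{2}-X_{1}))+C\|\Delta X\|^{2}$.

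Finally, taking $X_{1}=X$, $X_{2}=X+\theta\tilde{X}$ and dividing by $\theta$, the two-sided bound sandwiches the difference quotient between $((Z_{X+\theta\tilde{X},t}(t),\tilde{X}))+O(\theta)$ and $((Z_{X,t}(t),\tilde{X}))+O(\theta)$; the Lipschitz estimate of the first step shows $Z_{X+\theta\tilde{X},t}(t)\to Z_{X,t}(t)$ as $\theta\to0$, so the limit exists and equals $((Z_{X,t}(t),\tilde{X}))$, proving G\^ateaux differentiability with $D_{X}V(X,t)=Z_{X,t}(t)$. The $\sigma(X)$-measurability is then immediate from $Z_{X,t}(t)=\mathbb{E}[\,\cdots\mid\sigma(X)]$; the bound $\|D_{X}V(X,t)\|\le C(1+\|X\|)$ is the $s=t$ case of Proposition \ref{prop:3-4}; and the Lipschitz bound $\|D_{X}V(X_{1},t)-D_{X}V(X_{2},t)\|=\|Z_{1}(t)-Z_{2}(t)\|\le C\|X_{1}-X_{2}\|$ is precisely the first step, which moreover gives continuity of $X\mapsto Z_{X,t}(t)$ and hence continuous differentiability. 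I expect the main obstacle to be the first step: the Lipschitz estimate for the coupled forward-backward system must be carried out so that the monotonicity gain $\lambda_{T}$ dominates the possibly negative contributions of the quasi-convex data, and it relies on the filtration identification of Proposition \ref{prop:3-31} to render the two conditional expectations comparable.
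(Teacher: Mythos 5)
Your proposal is correct and matches the paper's own argument in all essentials: the same comparison step (feeding one problem's optimal control into the other, so that by linearity of \eqref{eq:2-51} the trajectory shifts by $X_{1}-X_{2}$ and the control cost is unchanged), the same first-order expansion with quadratic remainder via \eqref{eq:2-1}, the same identification of the linear term as $((Z(t),X_{1}-X_{2}))$ through the tower property and Proposition \ref{prop:3-31}, and the same energy/monotonicity estimate from \eqref{eq:2-3} and \eqref{eq:2-9} to control $Z_{1}(t)-Z_{2}(t)$. The only differences are organizational: you establish the Lipschitz estimate $\sup_{s}\|Z_{1}(s)-Z_{2}(s)\|\leq C\|X_{1}-X_{2}\|$ first and use it (via Cauchy--Schwarz) to close the two-sided bound, whereas the paper closes the sandwich with the weaker one-sided monotonicity bound \eqref{eq:3-7} and derives the full Lipschitz estimate afterwards, and you conclude G\^ateaux differentiability via directional difference quotients where the paper's quadratic-error bound in fact yields Fr\'echet differentiability directly.
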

\begin{proof}
The proof is enclosed in the appendix C.
\end{proof}

\subsection{DYNAMIC PROGRAMMING PRINCIPLE FOR THE VALUE FUNCTION}
We now complete the optimality principle as follows. 
\begin{prop} \label{prop:3-5} Under the assumptions specified in Proposition \ref{prop:3-4}, the optimality principle is given by 
\begin{align} \label{eq:3-8}
\begin{cases}
& V(X,t) =\dfrac{1}{2\lambda}\int_{t}^{t+h}||Z_{X,t}(s)||^{2}ds+\int_{t}^{t+h}F(Y_{X,t}(s))ds+V(Y_{X,t}(t+h),t+h),\,\text{ for } t+h\leq T, \\
& V(X,T) =F_{T}(X) . 
\end{cases}
\end{align}
\end{prop}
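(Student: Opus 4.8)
The plan is to read off the identity directly from the closed-form expression \eqref{eq:3-2} for the value function, rather than from the optimality-principle inequality \eqref{eq:2-8-1}. The terminal condition $V(X,T)=F_T(X)$ is immediate, since at $t=T$ every admissible control acts over an interval of zero length, so $J_{X,T}(v)=F_T(\mathcal{L}_X)$ for all $v$. For the main identity I would split each integral in \eqref{eq:3-2} over $[t,T]$ into its parts over $[t,t+h]$ and over $[t+h,T]$, obtaining
\begin{equation*}
V(X,t)=\frac{1}{2\lambda}\int_t^{t+h}\|Z_{X,t}(s)\|^2\,ds+\int_t^{t+h}F(Y_{X,t}(s))\,ds+R,
\end{equation*}
where $R:=\frac{1}{2\lambda}\int_{t+h}^T\|Z_{X,t}(s)\|^2\,ds+\int_{t+h}^T F(Y_{X,t}(s))\,ds+F_T(Y_{X,t}(T))$. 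The whole statement is thereby reduced to the single claim $R=V(Y_{X,t}(t+h),t+h)$, and the bookkeeping $\tfrac{\lambda}{2}\|u(s)\|^2=\tfrac{1}{2\lambda}\|Z_{X,t}(s)\|^2$ (since $u=-\tfrac1\lambda Z_{X,t}$) already accounts for the coefficient in the first term.

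To prove the claim, set $X':=Y_{X,t}(t+h)$, which lies in $L^2(\Omega,\mathcal{F}^{t+h},\mathbb{P};\mathbb{R}^n)$ with finite norm by Proposition \ref{prop:3-4}. Comparing $R$ with \eqref{eq:3-2} written for the initial data $(X',t+h)$, we see that $R=V(X',t+h)$ will follow once the restriction of the optimal pair $(Y_{X,t},Z_{X,t})$ to $[t+h,T]$ is shown to coincide with the optimal pair $(Y_{X',t+h},Z_{X',t+h})$ of the subproblem started from $(X',t+h)$. I would establish this coincidence through uniqueness of the solution of the optimality system \eqref{eq:3-1}, so the core task is to verify that the restriction solves that system for the subproblem. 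The forward equation is immediate: splitting $\int_t^s=\int_t^{t+h}+\int_{t+h}^s$ and writing $w(s)-w(t)=(w(t+h)-w(t))+(w(s)-w(t+h))$ turns the forward equation of \eqref{eq:3-1} into $Y_{X,t}(s)=X'-\frac1\lambda\int_{t+h}^s Z_{X,t}(\tau)\,d\tau+\sigma(w(s)-w(t+h))$ for $s\ge t+h$, which is exactly the forward equation of the subproblem.

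The delicate point, which I expect to be the main obstacle, is the backward equation: one must show $Z_{X,t}(s)=\mathbb{E}[D_XF_T(Y_{X,t}(T))+\int_s^T D_XF(Y_{X,t}(\tau))\,d\tau\mid \mathcal{W}_{X',t+h}^s]$ for $s\ge t+h$, i.e. that the conditioning filtration may be shrunk from $\mathcal{W}_{X,t}^s$ to $\mathcal{W}_{X',t+h}^s$. A naive argument directly in the $\mathcal{W}$-filtrations is circular: to know the integrand is $\mathcal{W}_{X',t+h}^T$-measurable one needs $Y_{X,t}$ on $[t+h,T]$ to be $\mathcal{W}_{X',t+h}$-adapted, which itself requires the adaptedness of $Z_{X,t}$ that we are trying to prove. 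I would break this circularity exactly as in Proposition \ref{prop:3-31}: by that proposition the costate satisfies $Z_{X,t}(s)=\mathbb{E}[D_XF_T(Y_{X,t}(T))+\int_s^T D_XF(Y_{X,t}(\tau))\,d\tau\mid\mathcal{F}^s]$ for every $s$, and the subproblem likewise admits a unique solution when its backward equation is written with the full filtration $\mathcal{F}^s$. In this common filtration there is no circularity, because a process built from an $\mathcal{F}$-adapted costate is automatically $\mathcal{F}$-adapted; hence the restriction of $(Y_{X,t},Z_{X,t})$ solves the $\mathcal{F}$-version of the subproblem system. By uniqueness it equals the $\mathcal{F}$-solution of the subproblem, which by the remark following Proposition \ref{prop:3-31} is precisely the optimal pair $(Y_{X',t+h},Z_{X',t+h})$. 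Assumption \eqref{eq:2-4} is the ingredient that makes these measurability and independence steps valid.

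Combining the two displays gives $R=V(X',t+h)=V(Y_{X,t}(t+h),t+h)$, and substituting back yields the asserted dynamic programming identity. I would note that the only genuinely nontrivial ingredient beyond elementary integral manipulations is the filtration-shrinking step of the previous paragraph; everything else is additivity of the dynamics and a direct comparison with \eqref{eq:3-2}.
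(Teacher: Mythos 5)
Your proof is correct, but it runs in the opposite logical direction from the paper's. The paper proves \eqref{eq:3-8} by the classical two-sided verification argument: the inequality $\geq$ follows from the suboptimality principle \eqref{eq:2-8-1} applied to the optimal control $u=-\frac{1}{\lambda}Z$, and the reverse inequality from concatenating $u$ on $(t,t+h)$ with the optimal control $\tilde u$ of the subproblem started at $(Y(t+h),t+h)$, which yields an admissible control for $(X,t)$ whose cost is the right-hand side of \eqref{eq:3-8}; the flow property you prove — that $(Y_{X,t},Z_{X,t})$ restricted to $[t+h,T]$ is the optimal pair of the subproblem — is recorded in the paper only \emph{after} the proposition, as a consequence of the identity. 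You establish that flow property first, and then read \eqref{eq:3-8} off the representation formula \eqref{eq:3-2} by splitting the integrals at $t+h$; no appeal to the infimum over controls is needed beyond \eqref{eq:3-2} itself. Your handling of the one delicate step is sound: the circularity in shrinking the conditioning from $\mathcal{W}_{X,t}^{s}$ to $\mathcal{W}_{X',t+h}^{s}$ (with $X'=Y_{X,t}(t+h)$) is correctly broken by passing through the common filtration $\mathcal{F}^{s}$ via Proposition \ref{prop:3-31}, verifying that the restriction solves the $\mathcal{F}^{s}$-version of \eqref{eq:3-1} with data $(X',t+h)$ — noting $X'$ is $\mathcal{F}^{t+h}$-measurable — and invoking uniqueness twice. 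What the paper's route buys is economy: it needs only \eqref{eq:2-8-1} and admissibility of the concatenated control, with essentially no measurability analysis. What your route buys is the stronger conclusion obtained directly, namely $(Y_{X,t},Z_{X,t})\big|_{[t+h,T]}=(Y_{X',t+h},Z_{X',t+h})$ and hence $Z_{X,t}(t+h)=D_{X}V(Y(t+h),t+h)$, rather than deriving these afterwards. One caveat to make explicit if you write this up: the existence and uniqueness of the $\mathcal{F}^{s}$-conditioned system for the subproblem, which replaces the paper's concatenation step, is itself only asserted (by analogy with the convexity arguments) at the start of the paper's proof of Proposition \ref{prop:3-31}, so your argument rests on the same unproved-in-detail foundation as the paper's does there.
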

\begin{proof} Again we omit unnecessary subscripts $X$ and $t$. We have $V(X,t)=J_{X,t}(u)$ with $u(s)=-\dfrac{1}{\lambda}Z(s)$. According to (\ref{eq:2-8-1}) in Section \ref{subsec:CONTROL-PROBLEM}, we can assert that 
\[
V(X,t)\geq\dfrac{1}{2\lambda}\int_{t}^{t+h}||Z(s)||^{2}ds+\int_{t}^{t+h}F(Y(s))ds+V(Y(t+h),t+h) .
\]
 On the other hand, if we take the control $\tilde{u}(s)$ which is optimal for the problem with initial condition $(Y(t+h),t+h)$, then combining $u(s)$ for $s\in(t,t+h)$ and $\tilde{u}(s)$ for $s\in(t+h,T)$ we get an admissible control for the problem with initial condition $(X,t)$. The corresponding cost is $\dfrac{1}{2\lambda}\int_{t}^{t+h}||Z(s)||^{2}ds+\int_{t}^{t+h}F(Y(s))ds+V(Y(t+h),t+h)$ which is greater in value than $V(X,t)$. Therefore, the reverse inequality holds which implies (\ref{eq:3-8}). 
\end{proof}
It follows that 
\[
V(Y(t+h),t+h)=\dfrac{1}{2\lambda}\int_{t+h}^{T}||Z(s)||^{2}ds+\int_{t+h}^{T}F(Y(s))ds+F_{T}(Y(T)) ,
\] 
and the pair $(Y(s),Z(s))$ is also the solution of the system (\ref{eq:3-1}) corresponding to initial condition $(Y(t+h),\,t+h)$. Therefore, $Z_{X,t}(t+h)=D_{X}V(Y(t+h),t+h)$ and in general $Z_{X,t}(s)=-\dfrac{1}{\lambda}D_{X}V(Y_{X,t}(s),s)$. The control $u(s)$ is optimal for the problem with initial condition $(Y(t+h),t+h)$. 

\subsection{REGULARITY IN TIME OF VALUE FUNCTION}
We aim to show the following regularity result in time for the value function.  
\begin{prop} \label{prop:3-6} Under the assumptions specified in Proposition \ref{prop:3-4}, for any $t \leq t_{1}, t_{2} \leq T$, suppose that $X$ is both $\mathcal{F}^{t_{1}}$- and $\mathcal{F}^{t_{2}}$-measurable, then we have the estimate:
\begin{equation}
|V(X,t_{2})-V(X,t_{1})|\leq C(1+||X||^{2})|t_{2}-t_{1}| .\label{eq:3-9}
\end{equation}
In addition, $D_{X}V(X,t)$ is (H\"older-)continuous in time such that
\begin{equation}
||D_{X}V(X,t_{2})-D_{X}V(X,t_{1})||\leq C(1+||X||)|t_{2}-t_{1}|^{\frac{1}{2}} . \label{eq:3-90}
\end{equation}
\end{prop}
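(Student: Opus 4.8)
The plan is to handle the two estimates separately, in each case reducing the time increment to quantities already controlled by the a priori bounds of Proposition \ref{prop:3-4}, the Lipschitz regularity of $D_X V$ from Theorem \ref{Theo3-1}, and the dynamic programming principle of Proposition \ref{prop:3-5}. Throughout assume $t_1<t_2$ and abbreviate $Y(s)=Y_{X,t_1}(s)$, $Z(s)=Z_{X,t_1}(s)$, and $\Delta := Y(t_2)-X = -\tfrac1\lambda\int_{t_1}^{t_2}Z(\tau)\,d\tau + \sigma(w(t_2)-w(t_1))$. The role of the two measurability hypotheses should be kept in mind: $\mathcal{F}^{t_2}$-measurability guarantees that $V(X,t_2)$ is well defined, while $\mathcal{F}^{t_1}$-measurability is exactly what renders $X$ (and hence $D_X V(X,t_2)$, which is $\sigma(X)$-measurable) independent of the increment $w(t_2)-w(t_1)$.

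For \eqref{eq:3-9} I would apply \eqref{eq:3-8} with $h=t_2-t_1$ to write
\[ V(X,t_1)-V(X,t_2) = \frac{1}{2\lambda}\int_{t_1}^{t_2}\|Z(s)\|^2\,ds + \int_{t_1}^{t_2}F(Y(s))\,ds + \big[V(Y(t_2),t_2)-V(X,t_2)\big]. \]
The first two terms are immediately $O\big((1+\|X\|^2)(t_2-t_1)\big)$ from $\|Z(s)\|,\|Y(s)\|\le C(1+\|X\|)$ and the quadratic growth \eqref{eq:2-2} of $F$; the difficulty is the last bracket, for which the crude Lipschitz bound gives only the order $\|\Delta\|=O(\sqrt{t_2-t_1})$, one half-power short. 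The resolution is a first-order expansion with a mean-zero cancellation. Since $V(\cdot,t_2)$ is continuously G\^ateaux differentiable with Lipschitz gradient,
\[ V(Y(t_2),t_2)-V(X,t_2) = ((D_X V(X,t_2),\Delta)) + \int_0^1((D_X V(X+\theta\Delta,t_2)-D_X V(X,t_2),\Delta))\,d\theta. \]
The remainder integral is bounded by $\tfrac{C}{2}\|\Delta\|^2$, and since the Brownian part of $\Delta$ contributes $\|\sigma(w(t_2)-w(t_1))\|^2=\mathrm{tr}(\sigma\sigma^\top)(t_2-t_1)$, one gets $\|\Delta\|^2\le C(1+\|X\|^2)(t_2-t_1)$, of the right order. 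In the leading term I would split $\Delta$: the drift part paired against $D_X V(X,t_2)$ is $O((1+\|X\|^2)(t_2-t_1))$, while the Brownian part gives $\mathbb{E}\big[\sigma^\top D_X V(X,t_2)\cdot(w(t_2)-w(t_1))\big]$, which \emph{vanishes} because $D_X V(X,t_2)$ is $\sigma(X)$-measurable hence independent of $w(t_2)-w(t_1)$, whose mean is zero. This cancellation is what upgrades the exponent from $1/2$ to $1$ and yields \eqref{eq:3-9}.

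For \eqref{eq:3-90} I would avoid any delicate martingale-increment estimate by using $D_X V(X,t)=Z_{X,t}(t)=\mathbb{E}[\Psi_t\mid\sigma(X)]$, where $\Psi_t:=D_X F_T(Y_{X,t}(T))+\int_t^T D_X F(Y_{X,t}(\tau))\,d\tau$. As conditional expectation is an $L^2$-contraction,
\[ \|D_X V(X,t_2)-D_X V(X,t_1)\| = \|\mathbb{E}[\Psi_{t_2}-\Psi_{t_1}\mid\sigma(X)]\| \le \|\Psi_{t_2}-\Psi_{t_1}\|. \]
By the flow property $Y_{X,t_1}$ restricted to $[t_2,T]$ coincides with $Y_{Y(t_2),t_2}$, so after peeling off the harmless piece $\int_{t_1}^{t_2}D_X F(Y)\,d\tau=O(t_2-t_1)$, the two quantities $\Psi_{t_2},\Psi_{t_1}$ differ only through the two optimal trajectories on $[t_2,T]$ issued from $X$ and from $Y(t_2)$ at the common starting time $t_2$. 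Invoking the Lipschitz dependence of the optimal trajectory on its initial condition — the analogue of Theorem \ref{Theo3-1}, provable by the same monotone-FBSDE argument — together with \eqref{eq:2-1}, bounds $\|\Psi_{t_2}-\Psi_{t_1}\|$ by $C\|Y(t_2)-X\|=C\|\Delta\|\le C(1+\|X\|)\sqrt{t_2-t_1}$, which is \eqref{eq:3-90}.

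The main obstacle is conceptual rather than computational and lies entirely in the first estimate: one must recognize that the Lipschitz bound on $V$ alone yields only $\sqrt{t_2-t_1}$, and that the extra half-power is bought by the mean-zero cancellation of the Brownian increment, available only because the gradient is $\sigma(X)$-measurable while $X$ is $\mathcal{F}^{t_1}$-measurable (hence independent of the later noise). For the second estimate the sole nontrivial input is the Lipschitz-in-initial-condition stability of the optimal trajectory; once that is secured, the $L^2$-contraction of the conditional expectation makes the rest routine. Note that both estimates genuinely use both hypotheses on $X$: $\mathcal{F}^{t_2}$-measurability to define the time-$t_2$ problem and $\mathcal{F}^{t_1}$-measurability to secure the independence and flow structure.
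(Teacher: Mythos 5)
Your proof of \eqref{eq:3-9} is essentially the paper's own argument: the dynamic programming identity \eqref{eq:3-8}, a first-order expansion of $V(\cdot,t_2)$ around $X$ whose Lipschitz-gradient remainder is bounded by $C\|\Delta\|^2\leq C(1+\|X\|^2)(t_2-t_1)$, and the mean-zero cancellation of $((D_XV(X,t_2),\sigma(w(t_2)-w(t_1))))$ coming from the $\sigma(X)$-measurability of the gradient (Proposition \ref{prop:3-31}) together with the independence of the increment from $\mathcal{F}^{t_1}$ --- this is exactly the cancellation the paper exploits. For \eqref{eq:3-90} you genuinely diverge, and your route is correct. The paper proves the same initial-time stability bounds \eqref{eq:3-10}--\eqref{eq:3-101} by the monotonicity argument (precisely your ``monotone-FBSDE'' step, run at the common start time $t_2$ with data $X$ and $Y_{X,t_1}(t_2)$, using $\|X-Y_{X,t_1}(t_2)\|\leq C(1+\|X\|)(t_2-t_1)^{1/2}$), but then obtains the H\"older continuity of $D_XV$ indirectly: it telescopes $Z^{n}(t_n)$ and invokes continuity of the martingale $u\mapsto\mathbb{E}[\,\cdot\,|\mathcal{W}_{X,t}^{u}]$, handling $t_n\uparrow t$ and $t_n\downarrow t$ separately. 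You instead observe that $\mathcal{W}_{X,t}^{t}=\sigma(X)$ up to null sets, so that $D_XV(X,t_1)$ and $D_XV(X,t_2)$ are conditional expectations of $\Psi_{t_1},\Psi_{t_2}$ with respect to the \emph{same} $\sigma$-algebra $\sigma(X)$; the $L^2$-contraction of conditional expectation and the flow property (which the paper establishes right after Proposition \ref{prop:3-5}) then reduce everything to the stability bound, delivering the quantitative rate $(t_2-t_1)^{1/2}$ in one stroke and dispensing with the limiting martingale-continuity argument entirely. This is shorter and cleaner than the paper's treatment; the two points worth making explicit in a full write-up are (i) that the Lipschitz-in-initial-condition estimate of Theorem \ref{Theo3-1} applies verbatim to the random $\mathcal{F}^{t_2}$-measurable datum $Y_{X,t_1}(t_2)$ --- the monotonicity computation uses nothing about the initial condition beyond its measurability once Proposition \ref{prop:3-31} allows both systems to be conditioned on the common filtration $\mathcal{F}^{s}$ --- and (ii) the identification $\mathcal{W}_{X,t}^{t}=\sigma(X)$ that powers your contraction step.
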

\begin{proof}
The proof is put in the appendix C.
\end{proof}

\section{SECOND ORDER DERIVATIVE OF VALUE FUNCTION}
\subsection{ASSUMPTIONS AND SECOND ORDER DERIVATIVE}
To get more regularity for the value function, we need more assumptions. We now assume that $D_{X}^{2}F(X)\in\mathcal{L}(\mathcal{H};\mathcal{H})$
and $D_{X}^{2}F_{T}(X)\in\mathcal{L}(\mathcal{H};\mathcal{H})$ and
satisfy
\begin{equation}
\begin{array}{c}
||D_{X}^{2}F(X)||\leq c,\,||D_{X}^{2}F_{T}(X)||\leq c_{T},\\
((D_{X}^{2}F(X)\Xi,\Xi))+c'||\Xi||^{2}\geq0,\:((D_{X}^{2}F_{T}(X)\Xi,\Xi))+c'_{T}||\Xi||^{2}\geq0,\:\forall\Xi\in\mathcal{H}.
\end{array}\label{eq:4-1}
\end{equation}
 Also we make the measurability assumption 
\begin{equation}
D_{X}^{2}F(X),\:D_{X}^{2}F_{T}(X)\:\text{are \ensuremath{\sigma(X)} measurable.}\label{eq:4-101}
\end{equation}
This last assumption has to be explained, since these linear operators
are not matrices. What makes sense is $D_{X}^{2}F(X)Z$ for any $Z\in\mathcal{H}$
and the map $Z\rightarrow D_{X}^{2}F(X)Z$ is linear from $\mathcal{H}$
to $\mathcal{H}.$ The assumption (\ref{eq:4-101}) means 
\begin{equation}
D_{X}^{2}F(X)Z=B_{X}(X)Z+C_{XZ}(X)\label{eq:4-102}
\end{equation}
where
$$
B_X:\mathbb{R}^{n} \to \mathcal{L}(\mathbb{R}^{n};\mathbb{R}^{n})
\ \ \text{and} \ \
C_{XZ}: \mathbb{R}^{n} \to\mathbb{R}^{n}
$$
are deterministic functions.
Moreover the map $Z\rightarrow C_{XZ}(x)$ is linear. We also have that
$D_{X}^{2}F(X)Z$ is $\sigma(X,Z)$ measurable. 

The assumptions \eqref{eq:4-1} are naturally compatible with the
Lipschitz asssumptions \eqref{eq:2-1}, \eqref{eq:2-3}. We shall
assume also the H\"older regularity property 
\begin{equation}
\begin{array}{c}
||D_{X}^{2}F(X_{1})-D_{X}^{2}F(X_{2})||\leq c||X_{1}-X_{2}||^{\delta}\\
||D_{X}^{2}F_{T}(X_{1})-D_{X}^{2}F_{T}(X_{2})||\leq c_{T}||X_{1}-X_{2}||^{\delta},0<\delta\leq1.
\end{array}\label{eq:4-2}
\end{equation}
 We want to prove the following regularity of the value function 
\begin{theorem}
\label{theo4-1}We make the assumptions of Proposition $\ref{prop:3-4}$
and \eqref{eq:4-1}, \eqref{eq:4-2}. Then the value function $V(X,t)$
is twice continuously differentiable $in$ $X$ and $D_{X}^{2}V(X,t)\mathcal{X}$
for $X,\mathcal{X}$ $\mathcal{F}^{t}$ measurable is $\mathcal{F}^{t}$
measurable, and in fact $\sigma(X,\mathcal{X})$ measurable. Moreover, we have the H\"older regularity property 
\begin{equation}
||D_{X}^{2}V(X_{1},t)-D_{X}^{2}V(X_{2},t)||\leq C||X_{1}-X_{2}||^{\delta}\label{eq:4-3}
\end{equation}
where $C$ is a generic constant.
\end{theorem}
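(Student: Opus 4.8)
The plan is to obtain the second derivative by differentiating the optimality system \eqref{eq:3-1} with respect to the initial datum $X$. Since Theorem \ref{Theo3-1} already gives $D_{X}V(X,t)=Z_{X,t}(t)$, computing $D_{X}^{2}V(X,t)$ reduces to differentiating the map $X\mapsto Z_{X,t}(t)$. Formally differentiating \eqref{eq:3-1} in a direction $\mathcal{X}\in\mathcal{H}$ (taken $\mathcal{F}^{t}$-measurable) produces the linearized forward--backward system
\begin{equation*}
\dot{Y}(s)=\mathcal{X}-\frac{1}{\lambda}\int_{t}^{s}\dot{Z}(\tau)\,d\tau,\qquad \dot{Z}(s)=\mathbb{E}\left[D_{X}^{2}F_{T}(Y(T))\dot{Y}(T)+\int_{s}^{T}D_{X}^{2}F(Y(\tau))\dot{Y}(\tau)\,d\tau\,\Big|\,\mathcal{W}_{X,t}^{s}\right],
\end{equation*}
where $(Y,Z)=(Y_{X,t},Z_{X,t})$ is the already-known optimal pair. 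The candidate for the second derivative is then $D_{X}^{2}V(X,t)\mathcal{X}=\dot{Z}(t)$.

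First I would establish well-posedness of this linearized system. It has exactly the same structure as \eqref{eq:3-1} but with the frozen, bounded linear coefficients $D_{X}^{2}F_{T}(Y(T))$ and $D_{X}^{2}F(Y(\tau))$; the bounds $\|D_{X}^{2}F\|\leq c$, $\|D_{X}^{2}F_{T}\|\leq c_{T}$ from \eqref{eq:4-1}, together with the monotonicity inequalities in \eqref{eq:4-1} and the smallness condition \eqref{eq:2-9}, allow me to run the very same strict-monotonicity and contraction argument that gave unique solvability of \eqref{eq:3-1}. This yields a unique adapted pair $(\dot{Y},\dot{Z})$ together with the a priori bound $\|\dot{Y}(s)\|,\|\dot{Z}(s)\|\leq C\|\mathcal{X}\|$, so that $\mathcal{X}\mapsto\dot{Z}(t)$ is a bounded linear operator on $\mathcal{H}$.

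Next I would justify passing from this formal derivative to the genuine Gâteaux derivative. Writing $\Delta Y=Y_{X+\epsilon\mathcal{X},t}-Y_{X,t}$ and $\Delta Z=Z_{X+\epsilon\mathcal{X},t}-Z_{X,t}$, I would expand the increments of $D_{X}F$ and $D_{X}F_{T}$ along the segment joining the two states by the fundamental theorem of calculus, producing averaged coefficients $\int_{0}^{1}D_{X}^{2}F(Y+\theta\Delta Y)\,d\theta$ in place of $D_{X}^{2}F(Y)$. The pair $(\Delta Y/\epsilon-\dot{Y},\,\Delta Z/\epsilon-\dot{Z})$ then solves the linearized system driven by a source term that measures the discrepancy between these averaged Hessians and $D_{X}^{2}F(Y)$; by the Hölder bound \eqref{eq:4-2} and the Lipschitz dependence of the optimal trajectory on $X$ (established for Theorem \ref{Theo3-1}, giving $\|\Delta Y\|\leq C\epsilon\|\mathcal{X}\|$), this source is $O(\epsilon^{\delta})$, whence the difference quotients converge. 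This establishes $D_{X}^{2}V(X,t)\mathcal{X}=\dot{Z}(t)\in\mathcal{L}(\mathcal{H};\mathcal{H})$. The measurability claim then follows by repeating the reasoning of Proposition \ref{prop:3-31}: assumption \eqref{eq:4-101} forces the data of the linearized system to be $\sigma(X,\mathcal{X})$-measurable, so the conditioning on $\mathcal{W}_{X,t}^{s}$ may again be replaced by conditioning on $\mathcal{F}^{s}$ without altering $\dot{Z}$, giving $\sigma(X,\mathcal{X})$-measurability of $\dot{Z}(t)$.

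Finally, for the Hölder estimate \eqref{eq:4-3} I would compare the linearized systems built over $(Y_{X_{1},t},Z_{X_{1},t})$ and $(Y_{X_{2},t},Z_{X_{2},t})$ for a common direction $\mathcal{X}$. Their coefficients differ by $\|D_{X}^{2}F(Y_{X_{1},t}(\tau))-D_{X}^{2}F(Y_{X_{2},t}(\tau))\|\leq c\|Y_{X_{1},t}(\tau)-Y_{X_{2},t}(\tau)\|^{\delta}\leq C\|X_{1}-X_{2}\|^{\delta}$, using \eqref{eq:4-2} and the Lipschitz dependence of $Y_{X,t}$ on $X$ from Theorem \ref{Theo3-1}; the same bound holds for the terminal coefficient. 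Feeding this into the stability estimate for the linearized system (again powered by \eqref{eq:2-9}) gives $\|\dot{Z}_{1}(t)-\dot{Z}_{2}(t)\|\leq C\|X_{1}-X_{2}\|^{\delta}\|\mathcal{X}\|$, which is precisely \eqref{eq:4-3}. The main obstacle I anticipate is the convergence step: carefully controlling the averaged-Hessian source term so that the difference quotients converge in $L^{2}$ uniformly on $[t,T]$, which is exactly where the Hölder regularity \eqref{eq:4-2}, rather than mere boundedness of the second derivatives, becomes essential.
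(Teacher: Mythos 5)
Your proposal is correct in substance, but it reaches the theorem by a genuinely different route from the paper. The paper never differentiates the map $X\mapsto Z_{X,t}(t)$ through difference quotients; instead it introduces the auxiliary linear--quadratic control problem \eqref{eq:4-4}--\eqref{eq:4-5}, whose optimality system \eqref{eq:4-6} is exactly your linearized pair $(\dot{Y},\dot{Z})$, defines the candidate operator $\mathcal{X}\mapsto\mathcal{Z}(t)$, and uses the quadratic-value identity \eqref{eq:4-8} to prove a two-sided second-order Taylor expansion of $V$ itself, culminating in \eqref{eq:4-26}: the upper bound \eqref{eq:4-12} comes from inserting the suboptimal control $u_{2}+\mathcal{U}_{12}$, while the lower bound requires the exact representation $Z_{1}(s)-Z_{2}(s)=\mathcal{Z}_{12}(s)+\int_{0}^{1}\mathcal{Z}_{12}^{1-\theta}(s)\,d\theta$ obtained from the interpolated systems $(Y^{\theta},Z^{\theta})$ of \eqref{eq:4-14}--\eqref{eq:4-15}, with the corrections $\mathcal{Z}_{21}^{\theta},\mathcal{Z}_{12}^{1-\theta}$ estimated by $C\|X_{1}-X_{2}\|^{1+\delta}$ via \eqref{eq:4-2}. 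Your difference-quotient argument, in which $(\Delta Y/\epsilon-\dot{Y},\,\Delta Z/\epsilon-\dot{Z})$ solves the linearized system with an $O(\epsilon^{\delta})$ averaged-Hessian source, is the same key estimate in different packaging (your source term is precisely the paper's $\mathcal{Z}_{21}^{\theta}$-type correction), and it is arguably more direct: it gives G\^ateaux differentiability of the gradient with a rate, from which the expansion \eqref{eq:4-26} follows by integrating along the segment. What the paper's LQ formulation buys in exchange is the identity \eqref{eq:4-8}, which makes self-adjointness of $D_{X}^{2}V$ transparent and is exploited later: formula \eqref{eq:5-5} in Proposition \ref{prop:5-1} reads off the quadratic form $((D_{X}^{2}V\,\sigma N,\sigma N))$ from the LQ value, something your route would have to derive separately. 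Your argument for the H\"older bound \eqref{eq:4-3} coincides with the paper's.

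One inaccuracy to repair: you pose the linearized system with conditioning on $\mathcal{W}_{X,t}^{s}=\sigma(X)\vee\mathcal{W}_{t}^{s}$. For a general $\mathcal{F}^{t}$-measurable direction $\mathcal{X}$, the pair $(\dot{Y},\dot{Z})$ cannot be $\mathcal{W}_{X,t}$-adapted --- already $\dot{Y}(t)=\mathcal{X}$ fails to be $\sigma(X)$-measurable --- so conditioning on $\mathcal{W}_{X,t}^{s}$ would project away the dependence on $\mathcal{X}$ and produce the wrong operator. The system must be posed with conditioning on $\mathcal{F}^{s}$, as in \eqref{eq:4-6}; then, exactly as you indicate in your measurability step and as the paper does via \eqref{eq:4-101}, \eqref{eq:4-102} and the argument of Proposition \ref{prop:3-31}, one shows a posteriori that the solution is adapted to $\sigma(X,\mathcal{X})\vee\mathcal{W}_{t}^{s}$, whence the $\sigma(X,\mathcal{X})$-measurability of $D_{X}^{2}V(X,t)\mathcal{X}$. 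With that correction your outline is sound.
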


\subsection{\label{sub:LINEAR-QUADRATIC-CONTROL}LINEAR QUADRATIC CONTROL PROBLEM}

To construct the second derivative, we introduce a linear quadratic
control problem as follows. Let $X,t$ initial conditions as usual
and $X$ is $\mathcal{F}^{t}$ measurable. We consider the optimal
trajectory $Y_{Xt}(s)$ and the corresponding process $Z_{Xt}(s),$
which we denote $Y(s)$ and $Z(s)$ as above. Let $\mathcal{X}$ in
$\mathcal{H}$ which is $\mathcal{F}^{t}$ measurable. 
We define
the following linear quadratic control problem. For a control $\mathcal{V}(s)$
adapted to $\mathcal{F}^{s}$ we consider the state $\mathcal{X}(s)$
defined by 
\begin{equation}
\mathcal{X}(s)=\mathcal{X}+\int_{t}^{s}\mathcal{V}(\tau)d\tau\label{eq:4-4}
\end{equation}
and the payoff 
\begin{multline}
\mathcal{J}_{\mathcal{X}t}(\mathcal{V}(.))=\dfrac{\lambda}{2}\int_{t}^{T}||\mathcal{V}(s)||^{2}ds+\dfrac{1}{2}\int_{t}^{T}((D_{X}^{2}F(Y(s))\mathcal{X}(s),\mathcal{X}(s)))ds\\
+\dfrac{1}{2}((D_{X}^{2}F_{T}(Y(T))\mathcal{X}(T),\mathcal{X}(T))).\label{eq:4-5}
\end{multline}
 Thanks to the assumption on $\lambda$ (see (\ref{eq:2-9})) the cost
functional is quadratic convex and the problem has a unique minimum
denoted by $\mathcal{U}(s)$. If $\mathcal{Y}(s)$ denotes the optimal
state we have the system of necessary and sufficient conditions
\begin{equation}
\begin{array}{c}
\mathcal{Y}(s) =\mathcal{X}-\dfrac{1}{\lambda}\int_{t}^{s}\mathcal{Z}(\tau)d\tau\\
\mathcal{Z}(s)=\mathbb{E}[D_{X}^{2}F_{T}(Y(T))\mathcal{Y}(T)+\int_{s}^{T}D_{X}^{2}F(Y(\tau))\mathcal{Y}(\tau)d\tau|\mathcal{F}^{s}]
\end{array}\label{eq:4-6}
\end{equation}
and the optimal control is $\mathcal{U}(s)=-\dfrac{1}{\lambda}\mathcal{Z}(s).$
We consider in particular 
\begin{equation}
\mathcal{Z}(t)=\mathbb{E}[D_{X}^{2}F_{T}(Y(T))\mathcal{Y}(T)+\int_{s}^{T}D_{X}^{2}F_{T}(Y(\tau))\mathcal{Y}(\tau)d\tau|\mathcal{F}^{t}]\label{eq:4-7}
\end{equation}
 and the map $\mathcal{X}\rightarrow\mathcal{Z}(t)$ defines an operator
$\mathcal{\varUpsilon}(t)\mathcal{X}=\mathcal{Z}(t),$ which belongs
to $\mathcal{L}(\mathcal{H};\mathcal{H}).$ Our objective is to check
that $\mathcal{\varUpsilon}(t)=D_{X}^{2}V(X,t).$ Because of the conditional
expectation, which is a projection in the Hilbert space
$\mathcal{H}$, we cannot write an explicit formula for the operator,
independently of the argument. The pair $\mathcal{Y}(s),\mathcal{Z}(s)$
is in fact adapted to the filtration $\mathcal{W}_{X\mathcal{X}t}^{s}=\sigma(X,\mathcal{X},w(\tau)-w(t),t\leq\tau\leq s).$
We already know by Proposition \ref{prop:3-31} that $Y(s),Z(s)$
are adapted to the filtration $\mathcal{W}_{X,t}^{s}$. We define $\mathcal{\tilde{Y}}(s),$$\mathcal{\tilde{Z}}(s)$
as $\mathcal{Y}(s),\mathcal{Z}(s),$ but swapping $\mathcal{F}^{s}$
with $\mathcal{W}_{X\mathcal{X}t}^{s}.$ Then by the measurability
assumption (\ref{eq:4-101}), (\ref{eq:4-102}) the random variable
$D_{X}^{2}F_{T}(Y(T))\mathcal{Y}(T)+\int_{s}^{T}D_{X}^{2}F_{T}(Y(\tau))\mathcal{Y}(\tau)d\tau$
is $\mathcal{W}_{X\mathcal{X}t}^{T}$ measurable. Reasoning as in
Proposition \ref{prop:3-31}, we conclude that $\mathcal{\tilde{Y}}(s)=\mathcal{Y}(s),\:\mathcal{\tilde{Z}}(s)=\mathcal{Z}(s).$
Hence the adaptability property. 

The next important result is 
\begin{equation}
\inf_{\mathcal{V}(.)}\mathcal{J}_{\mathcal{X}t}(\mathcal{V}(.))=\dfrac{1}{2}((\mathcal{\varUpsilon}(t)\mathcal{X},X)),\label{eq:4-8}
\end{equation}
whose proof is a standard exercise in quadratic optimization. We finally give bounds. 
\begin{prop}
\label{prop:4-1}We assume (\ref{eq:4-1}),(\ref{eq:4-2}) and the
assumptions of Proposition \ref{prop:3-4}. We then have the estimates 
\begin{equation}
\sup_{t\leq s\leq T}||\mathcal{Y}(s)||,\:\sup_{t\leq s\leq T}||\mathcal{Z}(s)||\leq C||\mathcal{X}||\label{eq:4-9}
\end{equation}
 In particular $||\mathcal{\varUpsilon}(t)||\leq C.$ where $C$ is
a generic constant. \end{prop}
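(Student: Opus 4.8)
The plan is to follow the same template as the proof of Proposition~\ref{prop:3-4}: first derive an $L^{2}$-in-time bound on the optimal control $\mathcal{U}(\cdot)=-\frac{1}{\lambda}\mathcal{Z}(\cdot)$ from the coercivity of the quadratic cost, and then bootstrap it into the uniform-in-time estimates \eqref{eq:4-9} through the forward and backward relations in \eqref{eq:4-6}. The one simplification compared with Proposition~\ref{prop:3-4} is that the cost \eqref{eq:4-5} is homogeneous of degree two in $(\mathcal{X},\mathcal{V})$, so every bound comes out linear (rather than affine) in $\|\mathcal{X}\|$.

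\emph{Step 1 (an $L^{2}$ bound via monotonicity).} I would first reprove, for the linear--quadratic functional $\mathcal{J}_{\mathcal{X}t}$, the strong monotonicity of its G\^ateaux derivative with the same constant $\lambda_{T}$ as in Proposition~\ref{prop:2-2}. Its gradient, whose vanishing is exactly the optimality system \eqref{eq:4-6}, is $D_{\mathcal{V}}\mathcal{J}_{\mathcal{X}t}(\mathcal{V})(s)=\lambda\mathcal{V}(s)+\mathbb{E}[D_{X}^{2}F_{T}(Y(T))\mathcal{X}(T)+\int_{s}^{T}D_{X}^{2}F(Y(\tau))\mathcal{X}(\tau)d\tau\mid\mathcal{F}^{s}]$. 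Taking the difference of two controls $\mathcal{V}_{1},\mathcal{V}_{2}$ with states $\mathcal{X}_{1},\mathcal{X}_{2}$, pairing with $\mathcal{V}_{1}-\mathcal{V}_{2}$, and applying Fubini exactly as in Proposition~\ref{prop:2-2} (adaptedness of the controls lets the conditional expectation be dropped under the pairing) yields
\[
\int_{t}^{T}((D_{\mathcal{V}}\mathcal{J}_{\mathcal{X}t}(\mathcal{V}_{1})-D_{\mathcal{V}}\mathcal{J}_{\mathcal{X}t}(\mathcal{V}_{2}),\mathcal{V}_{1}-\mathcal{V}_{2}))ds\geq\lambda_{T}\int_{t}^{T}\|\mathcal{V}_{1}-\mathcal{V}_{2}\|^{2}ds,
\]
where now the quasi-convexity conditions in \eqref{eq:4-1} replace \eqref{eq:2-3} and \eqref{eq:2-9} guarantees $\lambda_{T}>0$. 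Choosing $\mathcal{V}_{1}=\mathcal{U}$ (optimal, so $D_{\mathcal{V}}\mathcal{J}_{\mathcal{X}t}(\mathcal{U})=0$) and $\mathcal{V}_{2}=0$ gives $\lambda_{T}\int_{t}^{T}\|\mathcal{U}\|^{2}ds\leq\|D_{\mathcal{V}}\mathcal{J}_{\mathcal{X}t}(0)\|_{L^{2}(t,T;\mathcal{H})}\,\|\mathcal{U}\|_{L^{2}(t,T;\mathcal{H})}$. Since at $\mathcal{V}=0$ the state is frozen at $\mathcal{X}$, the operator bounds $\|D_{X}^{2}F\|\leq c$, $\|D_{X}^{2}F_{T}\|\leq c_{T}$ in \eqref{eq:4-1} give $\|D_{\mathcal{V}}\mathcal{J}_{\mathcal{X}t}(0)(s)\|\leq(c_{T}+cT)\|\mathcal{X}\|$, hence $\|\mathcal{U}\|_{L^{2}(t,T;\mathcal{H})}\leq C\|\mathcal{X}\|$ and therefore $\|\mathcal{Z}\|_{L^{2}(t,T;\mathcal{H})}=\lambda\|\mathcal{U}\|_{L^{2}(t,T;\mathcal{H})}\leq C\|\mathcal{X}\|$.

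\emph{Step 2 (bootstrapping to sup bounds and conclusion).} From the forward equation $\mathcal{Y}(s)=\mathcal{X}-\frac{1}{\lambda}\int_{t}^{s}\mathcal{Z}(\tau)d\tau$ and Cauchy--Schwarz I get $\|\mathcal{Y}(s)\|\leq\|\mathcal{X}\|+\frac{\sqrt{T}}{\lambda}\|\mathcal{Z}\|_{L^{2}(t,T;\mathcal{H})}\leq C\|\mathcal{X}\|$ uniformly in $s$, which is the first bound in \eqref{eq:4-9}. Feeding this into the backward equation and using that conditional expectation is an $L^{2}$-contraction together with $\|D_{X}^{2}F\|\leq c$, $\|D_{X}^{2}F_{T}\|\leq c_{T}$ gives $\|\mathcal{Z}(s)\|\leq c_{T}\|\mathcal{Y}(T)\|+c\int_{s}^{T}\|\mathcal{Y}(\tau)\|d\tau\leq(c_{T}+cT)\sup_{r}\|\mathcal{Y}(r)\|\leq C\|\mathcal{X}\|$, the second bound in \eqref{eq:4-9}. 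Finally, since $\varUpsilon(t)\mathcal{X}=\mathcal{Z}(t)$, we obtain $\|\varUpsilon(t)\mathcal{X}\|=\|\mathcal{Z}(t)\|\leq\sup_{s}\|\mathcal{Z}(s)\|\leq C\|\mathcal{X}\|$ for every $\mathcal{X}\in\mathcal{H}$, i.e.~$\|\varUpsilon(t)\|\leq C$.

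The main obstacle is Step~1: one must redo the monotonicity estimate of Proposition~\ref{prop:2-2} in the present quadratic setting, carefully carrying out the two Fubini interchanges in time (with $\delta\mathcal{V}=\mathcal{V}_{1}-\mathcal{V}_{2}$ and $\delta\mathcal{X}(s)=\int_{t}^{s}\delta\mathcal{V}$, one interchange for the terminal term producing $((D_{X}^{2}F_{T}(Y(T))\delta\mathcal{X}(T),\delta\mathcal{X}(T)))$ and one for the running term producing $\int_{t}^{T}((D_{X}^{2}F(Y(\tau))\delta\mathcal{X}(\tau),\delta\mathcal{X}(\tau)))d\tau$), then invoking the quasi-convexity half of \eqref{eq:4-1} together with the elementary bounds $\|\delta\mathcal{X}(s)\|^{2}\leq(s-t)\int_{t}^{s}\|\delta\mathcal{V}\|^{2}$ to recover the positive coercivity constant $\lambda_{T}$ of \eqref{eq:2-9}. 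Once the monotonicity is in hand the remaining steps are routine; note in particular that no appeal to the value identity \eqref{eq:4-8} is needed, which avoids any circularity in bounding $\varUpsilon(t)$.
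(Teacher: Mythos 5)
Your proof is correct, but it reaches the key $L^2(t,T;\mathcal{H})$ bound on $\mathcal{Z}$ by a different mechanism than the paper. The paper works with the pairing $((\mathcal{Z}(t),\mathcal{X}))$: it first derives the energy identity \eqref{eq:4-90} (the linear-quadratic analogue of \eqref{eq:3-4}, obtained by differentiating $((\mathcal{Z}(s),\mathcal{Y}(s)))$ in $s$), bounds it from below via the quasi-convexity in \eqref{eq:4-1} and the same $\epsilon$-Young manipulations as in Proposition \ref{prop:3-4} to extract a coercive multiple of $\int_t^T\|\mathcal{Z}(s)\|^2\,ds$, and then bounds it from above by $(c_T+cT)\|\mathcal{X}\|^2$ using the value identity \eqref{eq:4-8} together with the suboptimality of the zero control, $\tfrac12((\mathcal{Z}(t),\mathcal{X}))\leq\mathcal{J}_{\mathcal{X}t}(0)$. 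You instead recast the same coercivity as strong monotonicity of the gradient map $\mathcal{V}\mapsto D_{\mathcal{V}}\mathcal{J}_{\mathcal{X}t}(\mathcal{V})$ with the constant $\lambda_T$ of \eqref{eq:2-9} (redoing Proposition \ref{prop:2-2} for the LQ functional, which is legitimate since the coefficients $D_X^2F(Y(s))$, $D_X^2F_T(Y(T))$ are frozen along the fixed trajectory $Y$ and the two Fubini/integration-by-parts steps go through verbatim with $\delta\mathcal{X}(t)=0$), and then test the first-order condition $D_{\mathcal{V}}\mathcal{J}_{\mathcal{X}t}(\mathcal{U})=0$ against the zero control, using the operator bounds in \eqref{eq:4-1} to get $\|D_{\mathcal{V}}\mathcal{J}_{\mathcal{X}t}(0)(s)\|\leq(c_T+cT)\|\mathcal{X}\|$. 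Both arguments ultimately compare the optimum with $\mathcal{V}=0$ and hinge on $\lambda_T>0$; your variant buys independence from \eqref{eq:4-8} (whose proof the paper leaves as a standard exercise) and produces bounds manifestly linear in $\|\mathcal{X}\|$ by homogeneity, while the paper's route gets double duty out of the identity \eqref{eq:4-90}, which is reused later (e.g.\ in \eqref{eq:4-17} and in the proof of Proposition \ref{prop:5-1}). Your Step 2 bootstrap through \eqref{eq:4-6} and the conclusion $\|\varUpsilon(t)\|\leq C$ from $\varUpsilon(t)\mathcal{X}=\mathcal{Z}(t)$ coincide with the paper's.
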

\begin{proof}
We use 
\begin{multline}
((\mathcal{Z}(t),\mathcal{X}))=\cfrac{1}{\lambda}\int_{t}^{T}||\mathcal{Z}(s)||^{2}ds+\int_{t}^{T}((D_{X}^{2}F(Y(s))\mathcal{Y}(s),\mathcal{Y}(s)))ds\\+((D_{X}^{2}F_{T}(Y(T))\mathcal{Y}(T),\mathcal{Y}(T)))
\geq\cfrac{1}{\lambda}\int_{t}^{T}||\mathcal{Z}(s)||^{2}ds-c'\int_{t}^{T}||\mathcal{Y}(s)||^{2}ds-c'_{T}||\mathcal{Y}(T)||^{2}\label{eq:4-90}
\end{multline}
and by calculations similar to those previously done 
\begin{equation}
((\mathcal{Z}(t),\mathcal{X}))\geq\dfrac{1}{\lambda}(1-\dfrac{1}{\lambda}(1+\epsilon)T(c'_{T}+\dfrac{c'T}{2}))\int_{t}^{T}||\mathcal{Z}(s)||^{2}ds-(1+\dfrac{1}{\epsilon})(c'_{T}+c'T)||\mathcal{X}||^{2}
\label{eq:4-10}
\end{equation}
But from (\ref{eq:4-8}) we have $\dfrac{1}{2}((\mathcal{Z}(t),\mathcal{X}))\leq\mathcal{J}_{\mathcal{X}t}(0)=\dfrac{1}{2}[\int_{t}^{T}((D_{X}^{2}F(Y(s))\mathcal{X},\mathcal{X}))ds+((D_{X}^{2}F_{T}(Y(T))\mathcal{X},\mathcal{X}))].$
Therefore $((\mathcal{Z}(t),\mathcal{X}))\leq(c_{T}+cT)||\mathcal{X}||^{2}$.
Combining with (\ref{eq:4-10}) we obtain $\int_{t}^{T}||\mathcal{Z}(s)||^{2}ds\leq C||\mathcal{X}||^{2}.$
From this estimate and formulas (\ref{eq:4-6}) we deduce  the
estimates (\ref{eq:4-9}). 
\end{proof}

\subsection{PROOF OF THEOREM \ref{theo4-1}}

Consider two random variables $X_{1},X_{2}$ which are $\mathcal{F}^{t}$
measurable. As in the proof of Theorem \ref{Theo3-1} we
associate the pairs $Y_{1}(s),Z_{1}(s)$ and $Y_{2}(s),Z_{2}(s)$
corresponding to the optimal control problems with initial conditions
$X_{1},t$ and $X_{2},t$. We also consider the associated linear
quadratic control problems defined in Section \ref{sub:LINEAR-QUADRATIC-CONTROL}.
They also depend on the choice of initials conditions. Now consider the linear control problem related to $Y_{2}(s),Z_{2}(s)$
with initial condition $\mathcal{X}=X_{1}-X_{2}.$ We call its optimal
solution $\mathcal{Y}_{12}(s),\,\mathcal{Z}_{12}(s)$. Precisely 
\begin{equation}
\begin{array}{c}
\mathcal{Y}_{12}(s)=X_{1}-X_{2}-\dfrac{1}{\lambda}\int_{t}^{s}\mathcal{Z}_{12}(\tau)d\tau,\\
\mathcal{Z}_{12}(s)=\mathbb{E}[D_{X}^{2}F_{T}(Y_{2}(T))\mathcal{Y}_{12}(T)+\int_{s}^{T}D_{X}^{2}F(Y_{2}(\tau))\mathcal{Y}_{12}(\tau)d\tau|\mathcal{F}_{t}^{s}].
\end{array}
\label{eq:4-11}
\end{equation}
Note that $\mathcal{Z}_{12}(t)=\mathcal{\varUpsilon}_{2}(t)(X_{1}-X_{2}).$
We next define the trajectory $Y_{12}(s)=Y_{2}(s)+\mathcal{Y}_{12}(s).$
It satisfies the equation 
\begin{equation*}
Y_{12}(s)=X_{1}-\dfrac{1}{\lambda}\int_{t}^{s}(Z_{2}(\tau)+\mathcal{Z}_{12}(\tau))d\tau+\sigma(w(s)-w(t)).
\end{equation*}
 We call $\mathcal{U}_{12}(s)=-\dfrac{1}{\lambda}\mathcal{Z}_{12}(s)$. So the trajectory $Y_{12}(s)$ corresponds to the control $u_{2}(s)+\mathcal{U}_{12}(s)$
and the initial condition $X_{1}.$ We thus have $V(X_{1},t)\leq J_{X_{1}t}(u_{2}(.)+\mathcal{U}_{12}(.)).$
Therefore 
\begin{multline*}
V(X_{1},t)-V(X_{2},t)\leq\dfrac{1}{2\lambda}\int_{t}^{T}||Z_{2}(s)+\mathcal{Z}_{12}(s)||^{2}ds-\dfrac{1}{2\lambda}\int_{t}^{T}||Z_{2}(s)||^{2}ds\\
+\int_{t}^{T}(F(Y_{2}(s)+\mathcal{Y}_{12}(s))-F(Y_{2}(s)))ds+F_{T}(Y_{2}(T)+\mathcal{Y}_{12}(T))-F_{T}(Y_{2}(T))
\end{multline*}
 From the assumptions \eqref{eq:4-2} we have the estimates
{\small \begin{equation*}
\begin{array}{c}
|F(Y_{2}(s)+\mathcal{Y}_{12}(s))-F(Y_{2}(s))-((D_{X}F(Y_{2}(s)),\mathcal{Y}_{12}(s)))-\dfrac{1}{2}((D_{X}^{2}F(Y_{2}(s))\mathcal{Y}_{12}(s),\mathcal{Y}_{12}(s)))|\leq C||\mathcal{Y}_{12}(s)||^{2+\delta},\\
|F_{T}(Y_{2}(T)+\mathcal{Y}_{12}(T))-F_{T}(Y_{2}(T))-((D_{X}F_{T}(Y_{2}(T)),\mathcal{Y}_{12}(T)))-\dfrac{1}{2}((D_{X}^{2}F_{T}(Y_{2}(T))\mathcal{Y}_{12}(T),\mathcal{Y}_{12}(T)))|\leq C||\mathcal{Y}_{12}(T)||^{2+\delta}.
\end{array}
\end{equation*}}
From the estimates \eqref{eq:4-9} we have $||\mathcal{Y}_{12}(s)||\leq C||X_{1}-X_{2}||$.
We also use 
\begin{equation*}
\int_{t}^{T}((Z_{2}(s),\mathcal{Z}_{12}(s)))ds=\int_{t}^{T}((D_{X}F_{T}(Y_{2}(T))+\int_{s}^{T}D_{X}F(Y(\tau))d\tau,\mathcal{Z}_{12}(s)))ds.
\end{equation*}
 Combining and rearranging we obtain 
\begin{equation}
V(X_{1},t)-V(X_{2},t)\leq((X_{1}-X_{2},Z_{2}(t)))+\dfrac{1}{2}((\varUpsilon_{2}(t)(X_{1}-X_{2}),X_{1}-X_{2}))+C||X_{1}-X_{2}||^{2+\delta}\label{eq:4-12}
\end{equation}
Interchanging the roles of $X_{1},X_{2}$ leads to 
\begin{equation}
V(X_{1},t)-V(X_{2},t)\geq((X_{1}-X_{2},Z_{1}(t)))-\dfrac{1}{2}((\varUpsilon_{1}(t)(X_{1}-X_{2}),X_{1}-X_{2}))-C||X_{1}-X_{2}||^{2+\delta}\label{eq:4-13}
\end{equation}

To proceed, we need a precise estimate of $Z_{1}(t)-Z_{2}(t).$
We introduce for $\theta\in(0,1)$ the system 
\begin{equation}
\begin{array}{c}
Y^{\theta}(s)=X_{1}+\theta(X_{2}-X_{1})-\dfrac{1}{\lambda}\int_{t}^{s}Z^{\theta}(\tau)d\tau+\sigma(w(s)-w(t)),\\
Z^{\theta}(s)=\mathbb{E}[D_{X}F_{T}(Y^{\theta}(T))+\int_{s}^{T}D_{X}F(Y^{\theta}(\tau))d\tau|\mathcal{F}_{t}^{s}].
\end{array}\label{eq:4-14}
\end{equation}
We note that if we interchange the roles of $X_{1}$ and $X_{2}$
then we obtain $Y^{1-\theta}(s).$ We next define $Y'^{\theta}(s),Z'^{\theta}(s)$
(the notation means that they are the derivatives of $Y^{\theta}(s),Z^{\theta}(s)$
with respect to $\theta$) by the system
\begin{equation}
\begin{array}{c}
Y'^{\theta}(s)=X_{2}-X_{1}-\dfrac{1}{\lambda}\int_{t}^{s}Z'^{\theta}(\tau)d\tau,\\
Z'^{\theta}(s)=\mathbb{E}[D_{X}^{2}F_{T}(Y^{\theta}(T))Y'^{\theta}(T)+\int_{s}^{T}D_{X}^{2}F(Y^{\theta}(\tau))Y'^{\theta}(\tau)d\tau|\mathcal{F}_{t}^{s}].
\end{array}\label{eq:4-15}
\end{equation}
We see that $Y^{0}(s)=Y_{1}(s)$ and $Y^{1}(s)=Y_{2}(s).$ Also recalling
the definition of $\mathcal{Y}_{12}(s),\mathcal{Z}_{12}(s),$ see
(\ref{eq:4-11}). In particular $\mathcal{Z}_{21}(t)=\varUpsilon_{1}(t)(X_{2}-X_{1}).$
Define finally $\mathcal{Y}_{21}^{\theta}(s)=Y'^{\theta}(s)-\mathcal{Y}_{21}(s),$
$\mathcal{Z}_{21}^{\theta}(s)=Z'^{\theta}(s)-\mathcal{Z}_{21}(s)$. We have the relations 
\begin{multline}
\mathcal{Y}_{21}^{\theta}(s)=-\dfrac{1}{\lambda}\int_{t}^{s}\mathcal{Z}_{21}^{\theta}(\tau)d\tau,\label{eq:4-16}\\
\mathcal{Z}_{21}^{\theta}(s)=\mathbb{E}[D_{X}^{2}F_{T}(Y_{1}(T))\mathcal{Y}_{21}^{\theta}(T)+(D_{X}^{2}F_{T}(Y^{\theta}(T))-D_{X}^{2}F_{T}(Y_{1}(T)))Y'^{\theta}(T)+
\\
+\int_{s}^{T}(D_{X}^{2}F(Y_{1}(\tau))\mathcal{Y}_{21}^{\theta}(\tau)+(D_{X}^{2}F(Y^{\theta}(\tau))-D_{X}^{2}F(Y_{1}(\tau)))Y'^{\theta}(\tau))d\tau\:|\mathcal{F}_{t}^{s}].
\end{multline}
From (\ref{eq:4-90}) we have 
\begin{equation}
((\mathcal{Z}_{21}(t),X_{2}-X_{1}))=\dfrac{1}{\lambda}\int_{t}^{T}||\mathcal{Z}_{21}(s)||^{2}ds+\int_{t}^{T}((D_{X}^{2}F(Y_{1}(s))\mathcal{Y}_{21}(s),\mathcal{Y}_{21}(s)))ds+((D_{X}^{2}F_{T}(Y_{1}(T))\mathcal{Y}_{21}(T),\mathcal{Y}_{21}(T)))\label{eq:4-17}
\end{equation}
Next, we check that 
\begin{equation}
Z_{2}(s)-Z_{1}(s)=\mathcal{Z}_{21}(s)+\int_{0}^{1}\mathcal{Z}_{21}^{\theta}(s)d\theta.\label{eq:4-18}
\end{equation}
Similarly, we introduce $Y'^{1-\theta}(s),Z'^{1-\theta}(s)$. We
have $Y'^{1}(s)=\mathcal{Y}_{12}(s),Z'^{1}(s)=\mathcal{Z}_{12}(s)$
and we define $\mathcal{Y}_{12}^{1-\theta}(s)=Y'^{1-\theta}(s)-\mathcal{Y}_{12}(s),\mathcal{Z}_{12}^{1-\theta}(s)=Z'^{1-\theta}(s)-\mathcal{Z}_{12}(s).$
We have relations similar to (\ref{eq:4-16}), (\ref{eq:4-17}). Moreover, as in (\ref{eq:4-18}) we have 
\begin{equation}
Z_{1}(s)-Z_{2}(s)=\mathcal{Z}_{12}(s)+\int_{0}^{1}\mathcal{Z}_{12}^{1-\theta}(s)d\theta\label{eq:4-19}
\end{equation}
Therefore, in particular, 
\begin{multline*}
((Z_{1}(t)-Z_{2}(t),X_{1}-X_{2}))=((Z_{2}(t)-Z_{1}(t),X_{2}-X_{1}))=\\
((\mathcal{\varUpsilon}_{2}(t)(X_{1}-X_{2}),X_{1}-X_{2}))+((\int_{0}^{1}\mathcal{Z}_{12}^{1-\theta}(s)d\theta,X_{1}-X_{2}))=\\
((\mathcal{\varUpsilon}_{1}(t)(X_{2}-X_{1}),X_{2}-X_{1}))+((\int_{0}^{1}\mathcal{Z}_{21}^{\theta}(s)d\theta,X_{2}-X_{1})).
\end{multline*}
 This can be written as 
\begin{multline*}
((Z_{1}(t)-Z_{2}(t),X_{1}-X_{2}))=\dfrac{1}{2}(((\mathcal{\varUpsilon}_{1}(t)+\mathcal{\varUpsilon}_{2}(t))(X_{1}-X_{2}),X_{1}-X_{2}))\\
+\dfrac{1}{2}((\int_{0}^{1}\mathcal{Z}_{12}^{1-\theta}(s)d\theta,X_{1}-X_{2}))+\dfrac{1}{2}((\int_{0}^{1}\mathcal{Z}_{21}^{\theta}(s)d\theta,X_{2}-X_{1})).
\end{multline*}
If we go back to (\ref{eq:4-13}) we can assert that
\begin{multline}
V(X_{1},t)-V(X_{2},t)\geq((X_{1}-X_{2},Z_{2}(t)))+\dfrac{1}{2}((\varUpsilon_{2}(t)(X_{1}-X_{2}),X_{1}-X_{2}))-C||X_{1}-X_{2}||^{2+\delta}
\\
+\dfrac{1}{2}((\int_{0}^{1}\mathcal{Z}_{12}^{1-\theta}(s)d\theta,X_{1}-X_{2}))+\dfrac{1}{2}((\int_{0}^{1}\mathcal{Z}_{21}^{\theta}(s)d\theta,X_{2}-X_{1}))-C||X_{1}-X_{2}||^{2+\delta}.\label{eq:4-20}
\end{multline}
From the definition of $Y'^{\theta}(s),$$Z'^{\theta}(s),$ see (\ref{eq:4-15})
we obtain by already used techniques 
\begin{equation}
\sup_{t\leq s\leq T}||Y'^{\theta}(s)||\leq C||X_{1}-X_{2}||,\:\sup_{t\leq s\leq T}||Z'^{\theta}(s)||\leq C||X_{1}-X_{2}||.\label{eq:4-21}
\end{equation}
Since $Y_{1}(s)=Y^{0}(s)$ we can assert that 
\begin{equation*}
Y^{\theta}(s)-Y_{1}(s)=\int_{0}^{\theta}Y'^{\lambda}(s)d\lambda
\end{equation*}
and thus from \eqref{eq:4-21} we can state 
\begin{equation}
\sup_{t\leq s\leq T}||Y^{\theta}(s)-Y_{1}(s)||\leq C||X_{1}-X_{2}||.\label{eq:4-22}
\end{equation}
From the assumption (\ref{eq:4-2}) it follows that
\begin{equation*}
\begin{array}{c}
||D_{X}^{2}F_{T}(Y^{\theta}(T))-D_{X}^{2}F_{T}(Y_{1}(T))||\leq C||X_{1}-X_{2}||^{\delta},\\
||D_{X}^{2}F(Y^{\theta}(s))-D_{X}^{2}F(Y_{1}(s))||\leq C||X_{1}-X_{2}||^{\delta},
\end{array}
\end{equation*}
and thus from \eqref{eq:4-21} we get 
\begin{equation*}
||(D_{X}^{2}F(Y^{\theta}(s))-D_{X}^{2}F(Y_{1}(s)))Y'^{\theta}(s)||\leq C||X_{1}-X_{2}||^{1+\delta},\\
||(D_{X}^{2}F_{T}(Y^{\theta}(T))-D_{X}^{2}F_{T}(Y_{1}(T)))Y'^{\theta}(T)||\leq C||X_{1}-X_{2}||^{1+\delta}.
\end{equation*}
Looking at \eqref{eq:4-16} we now obtain  
\begin{equation}
\sup_{t\leq s\leq T}||\mathcal{Z}_{21}^{\theta}(s)||\leq C||X_{1}-X_{2}||^{1+\delta}\label{eq:4-23}
\end{equation}
and similarly 
\begin{equation}
\sup_{t\leq s\leq T}||\mathcal{Z}_{12}^{1-\theta}(s)||\leq C||X_{1}-X_{2}||^{1+\delta}\label{eq:4-24}
\end{equation}
Combining (\ref{eq:4-12}) and (\ref{eq:4-20}) it follows that 
\begin{equation}
|V(X_{1},t)-V(X_{2},t)-((X_{1}-X_{2},Z_{2}(t)))-\dfrac{1}{2}((\varUpsilon_{2}(t)(X_{1}-X_{2}),X_{1}-X_{2}))|\leq C||X_{1}-X_{2}||^{2+\delta}\label{eq:4-25}
\end{equation}
which we may rewrite as 
\begin{equation}
|V(X+\mathcal{X},t)-V(X,t)-((\mathcal{X},Z(t)))-\dfrac{1}{2}((\varUpsilon(t)\mathcal{X},\mathcal{X}))|\leq C||\mathcal{X}||^{2+\delta}.\label{eq:4-26}
\end{equation}
This proves that $V$ is twice continuously differentiable in $X,$
with $D_{X}^{2}V(X,t)\mathcal{=\varUpsilon}(t)$ and $D_{X}^{2}V(X,t)\mathcal{X}$
is $\sigma(X,\mathcal{X})$ measurable. We finally prove the H\"older
estimate (4.3). 
Let $X_{1},X_{2}$ be $\mathcal{F}_{t}$ measurable
and $\mathcal{X}$ be $\mathcal{F}_{t}$ measurable. We define $Y_{1}(s),Z_{1}(s)$
and $Y_{2}(s),Z_{2}(s)$ associated with the initial conditions $X_{1},t$
and $X_{2},t$ respectively. We then define $\mathcal{Y}_{1}(s),\mathcal{Z}_{1}(s)$
and $\mathcal{Y}_{2}(s),\mathcal{Z}_{2}(s)$ respectively by (\ref{eq:4-6}).
We have $D_{X}^{2}V(X_{1},t)\mathcal{X}=\varUpsilon_{1}(t)\mathcal{X}=\mathcal{Z}_{1}(t)$
and $D_{X}^{2}V(X_{2},t)\mathcal{X}=\varUpsilon_{2}(t)\mathcal{X}=\mathcal{Z}_{2}(t)$.
Setting $\mathcal{\tilde{Y}}(s)=\mathcal{Y}_{1}(s)-\mathcal{Y}_{2}(s)$
and $\mathcal{\tilde{Z}}(s)=\mathcal{Z}_{1}(s)-\mathcal{Z}_{2}(s)$
we obtain 
\begin{multline*}
\mathcal{\tilde{Z}}(s)=\mathbb{E}[D_{X}^{2}F_{T}(Y_{1}(T))\mathcal{Y}_{1}(T)-D_{X}^{2}F_{T}(Y_{2}(T))\mathcal{Y}_{2}(T)\\
+\int_{s}^{T}(D_{X}^{2}F(Y_{1}(\tau))\mathcal{Y}_{1}(\tau)-D_{X}^{2}F(Y_{2}(\tau))\mathcal{Y}_{2}(\tau))d\tau\,|\mathcal{F}_{t}^{s}].
\end{multline*}
 We note that $||\mathcal{Y}_{1}(s)||,\,$$||\mathcal{Y}_{2}(s)||\leq C||\mathcal{X}||$
and 
\[
||(D_{X}^{2}F_{T}(Y_{1}(T))-D_{X}^{2}F_{T}(Y_{2}(T)))\mathcal{Y}_{1}(T)||\leq C||X_{1}-X_{2}||^{\delta}||\mathcal{X}||,
\]
\[
||(D_{X}^{2}F(Y_{1}(s))-D_{X}^{2}F(Y_{2}(s)))\mathcal{Y}_{1}(s)||\leq C||X_{1}-X_{2}||^{\delta}||\mathcal{X}||,
\]
 from which, using techniques already used, it follows that
\[
\sup_{t\leq s\leq T}||\mathcal{\tilde{Y}}(s)||,\,\sup_{t\leq s\leq T}||\mathcal{\tilde{Z}}(s)||\,\leq C||X_{1}-X_{2}||^{\delta}||\mathcal{X}||
\]
 and the result (\ref{eq:4-3}) is obtained immediately. This completes
the proof. We leave to the reader to check directly that $D_{X}^{2}V(X,t)$
is self-adjoint. 

\section{MAIN RESULT }

\subsection{PRELIMINARIES}

We first begin with a result which bears similarities with the result
of Proposition \ref{prop:3-31}. We state the 
\begin{prop}
\label{prop:5-1}We make the assumptions of Proposition \ref{prop:4-1}
and (\ref{eq:4-101}). Let $X$ be $\mathcal{F}^{t}$ measurable.
Then 
\begin{equation}
|((D_{X}^{2}V(X,t+h)\sigma(\dfrac{w(t+h)-w(t)}{h^{1/2}}),\sigma(\dfrac{w(t+h)-w(t)}{h^{1/2}}))-((D_{X}^{2}V(X,t)\sigma N,\sigma N))|\leq Ch^{\frac{\delta}{2}}(1+||X||^{2\delta})\label{eq:5-1}
\end{equation}
where $N$ is a standard gaussian variable in $\mathbb{R}^{n}$ which is independent
of $\mathcal{F}^{0}$ and the Wiener process. \end{prop}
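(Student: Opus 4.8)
The plan is to interpose an auxiliary standard Gaussian and thereby decouple the two distinct effects buried in \eqref{eq:5-1}: the change of the \emph{direction} from $\sigma\xi_{h}$, with $\xi_{h}:=(w(t+h)-w(t))/h^{1/2}$, to $\sigma N$, and the change of the operator's time argument from $t+h$ to $t$. The starting observation is that $\xi_{h}$ is itself a standard Gaussian in $\mathbb{R}^{n}$, and crucially that it is \emph{independent} of $\sigma(X)\vee\mathcal{W}_{t+h}^{T}$. Indeed $X$ is $\mathcal{F}^{t}$-measurable and, by Proposition \ref{prop:3-31} and the adaptedness discussion of Section \ref{sub:LINEAR-QUADRATIC-CONTROL}, the entire linear-quadratic problem defining $\varUpsilon(t+h)=D_{X}^{2}V(X,t+h)$ is governed by $Y_{X,t+h}(\cdot)$, which is adapted to $\sigma(X)\vee\mathcal{W}_{t+h}^{T}$; this $\sigma$-field is generated by $X$ and the increments of $w$ on $[t+h,T]$, and those are independent of the increment on $[t,t+h]$ that generates $\xi_{h}$. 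Since $N$ is independent of everything with the same law, the pairs $(\sigma\xi_{h},\,Y_{X,t+h}(\cdot))$ and $(\sigma N,\,Y_{X,t+h}(\cdot))$ have identical joint laws.

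\emph{Step 1 (direction swap).} By \eqref{eq:4-8} the quantity $((\varUpsilon(t+h)\mathcal{X},\mathcal{X}))$ is twice the optimal value of the linear-quadratic problem \eqref{eq:4-4}--\eqref{eq:4-5} started from $\mathcal{X}$ at time $t+h$, and this value is a functional of the joint law of $(\mathcal{X},Y_{X,t+h}(\cdot))$ alone: the cost is an expectation of law-determined quantities (via \eqref{eq:4-101}), and the admissible controls range over processes adapted to the explicitly law-determined filtration $\sigma(X,\mathcal{X})\vee\mathcal{W}_{t+h}^{s}$. The distributional identity above therefore gives
\[
((D_{X}^{2}V(X,t+h)\sigma\xi_{h},\sigma\xi_{h}))=((D_{X}^{2}V(X,t+h)\sigma N,\sigma N)).
\]
It is essential that this swap be carried out at time $t+h$ rather than at $t$: the trajectory $Y_{X,t}(\cdot)$ depends on the increments of $w$ over $[t,t+h]$, so $\xi_{h}$ would \emph{not} be independent of the data of the $t$-problem, and the identity would fail.

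\emph{Step 2 (time swap, fixed direction).} It remains to estimate $|((\varUpsilon(t+h)\sigma N,\sigma N))-((\varUpsilon(t)\sigma N,\sigma N))|$, which I would do through an intermediate linear-quadratic problem posed on $[t+h,T]$ but carrying the coefficients $D_{X}^{2}F(Y_{X,t}(\cdot))$, $D_{X}^{2}F_{T}(Y_{X,t}(T))$ of the $t$-problem. First, the flow property $Y_{X,t}(s)=Y_{Y_{X,t}(t+h),\,t+h}(s)$ for $s\ge t+h$, combined with the Lipschitz dependence of $(Y,Z)$ on the initial datum established in proving Theorem \ref{Theo3-1} and the bound $\|X-Y_{X,t}(t+h)\|\le C(1+\|X\|)h^{1/2}$ (from \eqref{eq:3-1}, Proposition \ref{prop:3-4}, and $\|\sigma(w(t+h)-w(t))\|\le Ch^{1/2}$), yields $\sup_{t+h\le s\le T}\|Y_{X,t+h}(s)-Y_{X,t}(s)\|\le C(1+\|X\|)h^{1/2}$; hence by the H\"older hypothesis \eqref{eq:4-2} the two coefficient families differ in operator norm by at most $C(1+\|X\|)^{\delta}h^{\delta/2}$. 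A routine LQ perturbation estimate—insert the optimiser of one problem into the other and use the a priori bounds \eqref{eq:4-9} together with $\|\sigma N\|^{2}=\mathrm{tr}(\sigma\sigma^{\top})$ being a constant—controls $\varUpsilon(t+h)$ against the intermediate operator by $C(1+\|X\|)^{\delta}h^{\delta/2}$. The intermediate operator and $\varUpsilon(t)$ share the same coefficients but run on $[t+h,T]$ versus $[t,T]$; matching the states at $t+h$ (the LQ dynamics \eqref{eq:4-4} is noise-free) and invoking \eqref{eq:4-9} again shows their difference is $O(h^{1/2})\le Ch^{\delta/2}$ with no $\|X\|$-growth. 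Collecting the two bounds and absorbing $(1+\|X\|)^{\delta}\le C(1+\|X\|^{2\delta})$ gives \eqref{eq:5-1}.

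The main obstacle is Step 1: one must justify rigorously that inside the quadratic form the randomness of $\sigma\xi_{h}$ may be replaced by that of $\sigma N$, which rests entirely on the measurability and independence bookkeeping for the filtrations $\sigma(X)\vee\mathcal{W}_{t+h}^{s}$ supplied by Proposition \ref{prop:3-31} and assumption \eqref{eq:4-101}, and on the order in which the swaps are performed. Once this is in place, the time-regularity estimate of Step 2 is standard, the only quantitative inputs being the flow property and the bound on $\|Y_{X,t+h}-Y_{X,t}\|$.
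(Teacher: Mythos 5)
Your proof is correct, and its skeleton coincides with the paper's: both arguments first replace $\sigma\xi_{h}$ (with $\xi_{h}=(w(t+h)-w(t))/h^{1/2}$) by $\sigma N$ at time $t+h$ --- this is exactly the paper's identity \eqref{eq:5-51}, justified there, as by you, through the independence of the increment $w(t+h)-w(t)$ from the data $(X,\,w(\cdot)-w(t+h))$ driving $Y_{X,t+h}$, together with the measurability assumption \eqref{eq:4-101} --- and both then estimate the purely temporal difference with the direction frozen at $\sigma N$, using $\sup_{t+h\leq s\leq T}\|Y_{X,t+h}(s)-Y_{X,t}(s)\|\leq C(1+\|X\|)h^{1/2}$ (the paper's \eqref{eq:5-9}, obtained from \eqref{eq:3-10}) and the H\"older hypothesis \eqref{eq:4-2}. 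Where you diverge is the implementation of the time comparison: the paper works directly on the closed-loop systems, setting $\tilde{\mathcal{Y}}_{h}=\mathcal{Y}_{h}-\mathcal{Y}$, $\tilde{\mathcal{Z}}_{h}=\mathcal{Z}_{h}-\mathcal{Z}$ and deriving an energy identity whose right-hand side $((\sigma N-\mathcal{Y}(t+h),\tilde{\mathcal{Z}}_{h}(t+h)))$ absorbs the initial-state mismatch, concluding $\sup\|\tilde{\mathcal{Y}}_{h}\|,\sup\|\tilde{\mathcal{Z}}_{h}\|\leq C(1+\|X\|)^{\delta}h^{\delta/2}$ and then expanding the difference of quadratic forms; you instead argue variationally, interposing an LQ problem on $[t+h,T]$ carrying the coefficients of the $t$-problem, bounding the coefficient perturbation by inserting each optimizer into the other problem (via \eqref{eq:4-8} and the a priori bounds \eqref{eq:4-9}), and handling the horizon shift by dynamic programming plus the Lipschitz dependence of the quadratic value on the initial state --- which in fact yields $O(h)$ for that piece, better than your claimed $O(h^{1/2})$. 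The variational route spares you the difference-system estimates at the cost of the admissibility and filtration bookkeeping you rightly flag (your law-invariance argument for the swap is spelled out more than the paper's one-line assertion, and to the same standard of rigor); the paper's route gives stronger process-level estimates on $\tilde{\mathcal{Y}}_{h},\tilde{\mathcal{Z}}_{h}$, which it then reuses when expanding the quadratic forms. Both deliver \eqref{eq:5-1}, and your observation that the swap must occur at $t+h$ rather than at $t$ is precisely the order the paper follows.
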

\begin{proof}
Although $D_{X}^{2}V(X,t)\mathcal{X}$ has been defined only on arguments
$\mathcal{X}$ which are $\mathcal{F}^{t}$ measurable, we can also
take an initial condition like $\sigma N$ ($\sigma$ is there for
convenience), where $N$ is independent of the filtration $\mathcal{F}^{s}$.
We replace the system (\ref{eq:4-6}) by 
\begin{equation}
\begin{array}{c}
\mathcal{Y}(s) =\mathcal{\sigma}N-\dfrac{1}{\lambda}\int_{t}^{s}\mathcal{Z}(\tau)d\tau,\\
\mathcal{Z}(s)=\mathbb{E}[D_{X}^{2}F_{T}(Y(T))\mathcal{Y}(T)+\int_{s}^{T}D_{X}^{2}F(Y(\tau))\mathcal{Y}(\tau)d\tau|\mathcal{F}^{s}\cup\sigma(N)].
\end{array}
\label{eq:5-2}
\end{equation}
Note that $N$ is independent of the process $Y(s)$. Consider now
$D_{X}^{2}V(X,t+h)\sigma(\dfrac{w(t+h)-w(t)}{h^{1/2}})=\mathcal{Z}_{h}(t+h)$
where the pair $\mathcal{Z}_{h}(s),\mathcal{Y}_{h}(s)$ is defined
by 
\begin{equation}
\begin{array}{c}
\mathcal{Y}_{h}(s)=\mathcal{\sigma}(\dfrac{w(t+h)-w(t)}{h^{1/2}})-\dfrac{1}{\lambda}\int_{t+h}^{s}\mathcal{Z}_{h}(\tau)d\tau,\\
\mathcal{Z}_{h}(s)=\mathbb{E}[D_{X}^{2}F_{T}(Y^{h}(T))\mathcal{Y}_{h}(T)+\int_{s}^{T}D_{X}^{2}F(Y^{h}(\tau))\mathcal{Y}_{h}(\tau)d\tau|\mathcal{F}^{s}]
\end{array}\label{eq:5-3}
\end{equation}
for $s\geq t+h.$ Also $Y^{h}(s),Z^{h}(s)$ are defined by the system
\begin{equation}
\begin{array}{c}
Y^{h}(s)=X-\dfrac{1}{\lambda}\int_{t}^{s}Z^{h}(\tau)d\tau+\sigma(w(s)-w(t+h))\\
Z^{h}(s)=\mathbb{E}[D_{X}F_{T}(Y^{h}(T))+\int_{s}^{T}D_{X}F(Y^{h}(\tau))d\tau|\mathcal{F}^{s}]
\end{array}\label{eq:5-4}
\end{equation}
and we know that $Y^{h}(s),Z^{h}(s)$ are adapted to the filtration
$\mathcal{W}_{X,t+h}^{s}$ and thus are independent of $w(\tau)-w(t),\;\forall t\leq\tau\leq t+h.$
Moreover $\mathcal{Y}_{h}(s),\mathcal{Z}_{h}(s)$ are adapted to $\mathcal{W}_{X,w(t+h)-w(t),t+h}^{s}$. Recall that
\begin{multline}
((D_{X}^{2}V(X,t+h)\sigma(\dfrac{w(t+h)-w(t)}{h^{1/2}}),\sigma(\dfrac{w(t+h)-w(t)}{h^{1/2}}))=\dfrac{1}{\lambda}\int_{t+h}^{T}||\mathcal{Z}_{h}(s)||^{2}ds\\
+\int_{t+h}^{T}((D_{X}^{2}F(Y^{h}(s))\mathcal{Y}_{h}(s),\mathcal{Y}_{h}(s)))ds+((D_{X}^{2}F_{T}(Y^{h}(T))\mathcal{Y}_{h}(T),\mathcal{Y}_{h}(T))).
\label{eq:5-5}
\end{multline}
We do not change the value of the right hand side by replacing $\dfrac{w(t+h)-w(t)}{h^{1/2}}$
by a fixed $N$ which is standard Gaussian independent of $\mathcal{F}^{s}.$
We have 
\begin{equation}
((D_{X}^{2}V(X,t+h)\sigma(\dfrac{w(t+h)-w(t)}{h^{1/2}}),\sigma(\dfrac{w(t+h)-w(t)}{h^{1/2}}))=((D_{X}^{2}V(X,t+h)\sigma N,\sigma N))\label{eq:5-51}
\end{equation}
 with 
\begin{multline}\label{eq:5-6}
((D_{X}^{2}V(X,t+h)\sigma N,\sigma N))=\dfrac{1}{\lambda}\int_{t+h}^{T}||\mathcal{Z}_{h}(s)||^{2}ds \\
+\int_{t+h}^{T}((D_{X}^{2}F(Y^{h}(s))\mathcal{Y}_{h}(s),\mathcal{Y}_{h}(s)))ds+((D_{X}^{2}F_{T}(Y^{h}(T))\mathcal{Y}_{h}(T),\mathcal{Y}_{h}(T)))
\end{multline}
 and
\begin{align}
\mathcal{Y}_{h}(s)&=\mathcal{\sigma}N-\dfrac{1}{\lambda}\int_{t+h}^{s}\mathcal{Z}_{h}(\tau)d\tau,\label{eq:5-7}\\
\mathcal{Z}_{h}(s)&=\mathbb{E}[D_{X}^{2}F_{T}(Y^{h}(T))\mathcal{Y}_{h}(T)+\int_{s}^{T}D_{X}^{2}F(Y^{h}(\tau))\mathcal{Y}_{h}(\tau)d\tau|\mathcal{F}^{s}\cup\sigma(N)]. \notag
\end{align}
 We next estimate the difference $|((D_{X}^{2}V(X,t+h)\sigma N,\sigma N))-((D_{X}^{2}V(X,t)\sigma N,\sigma N))|.$
We have $Y^{h}(s)=Y_{X,t+h}(s),\,Z^{h}(s)=Z_{X,t+h}(s)$ and by
\eqref{eq:3-10} we obtain
\begin{equation}
\sup_{t+h\leq s\leq T}||Y^{h}(s)-Y(s)||,\:\sup_{t+h\leq s\leq T}||Z^{h}(s)-Z(s)||\leq C(1+||X||)h^{\frac{1}{2}}\label{eq:5-9}
\end{equation}
Recalling that $\mathcal{Y}(s),\mathcal{Z}(s)$ is the solution of
(\ref{eq:5-2}) we define $\mathcal{\tilde{Y}}_{h}(s)=\mathcal{Y}_{h}(s)-\mathcal{Y}(s),$
$\mathcal{\tilde{Z}}_{h}(s)=\mathcal{Z}_{h}(s)-\mathcal{Z}(s)$. After
calculations already done, we get 
\begin{multline*}
\dfrac{1}{\lambda}\int_{t+h}^{T}||\mathcal{\tilde{Z}}_{h}(s)||^{2}ds+((D_{X}^{2}F_{T}(Y(T))\mathcal{\tilde{Y}}_{h}(T),\mathcal{\tilde{Y}}_{h}(T)))+\int_{t+h}^{T}((D_{X}^{2}F(Y(s))\mathcal{\tilde{Y}}_{h}(s),\mathcal{\tilde{Y}}_{h}(s)))ds\\
+(((D_{X}^{2}F_{T}(Y^{h}(T))-D_{X}^{2}F_{T}(Y(T)))\mathcal{Y}_{h}(T),\mathcal{\tilde{Y}}_{h}(T)))+\int_{t+h}^{T}(((D_{X}^{2}F(Y^{h}(s))-D_{X}^{2}F(Y(s)))\mathcal{Y}_{h}(s),\mathcal{\tilde{Y}}_{h}(s)))ds=\\
((\sigma N-\mathcal{Y}(t+h),\mathcal{\tilde{Z}}_{h}(t+h))).
\end{multline*}
 Using the assumption (\ref{eq:4-2}) and the estimates (\ref{eq:5-9})
we deduce
\begin{equation*}
\begin{array}{c}
||(D_{X}^{2}F_{T}(Y^{h}(T))-D_{X}^{2}F_{T}(Y(T)))\mathcal{Y}_{h}(T)||\leq C(1+||X||)^{\delta}h^{\frac{\delta}{2}},\\
||(D_{X}^{2}F(Y^{h}(s))-D_{X}^{2}F(Y(s)))\mathcal{Y}_{h}(s)||\leq C(1+||X||)^{\delta}h^{\frac{\delta}{2}}.
\end{array}
\end{equation*}
 Using $\mathcal{\tilde{Y}}_{h}(s)=\sigma N-\mathcal{Y}(t+h)-\dfrac{1}{\lambda}\int_{t+h}^{s}\mathcal{\tilde{Z}}_{h}(\tau)d\tau,$
$||\mathcal{\tilde{Z}}_{h}(t+h)||\leq C(||\mathcal{\tilde{Y}}_{h}(T)||+\int_{t+h}^{T}||\mathcal{\tilde{Y}}_{h}(s)||ds+(1+||X||)^{\delta}h^{\frac{\delta}{2}}),$
and performing standard estimates we can obtain 
\begin{equation*}
\sup_{t+h\leq s\leq T}||\mathcal{\tilde{Y}}_{h}(s)||,\:\sup_{t+h\leq s\leq T}||\mathcal{\tilde{Z}}_{h}(s)||\leq C(1+||X||)^{\delta}h^{\frac{\delta}{2}}.
\end{equation*}
 Finally,
\begin{multline*}
((D_{X}^{2}V(X,t+h)\sigma N,\sigma N))-((D_{X}^{2}V(X,t)\sigma N,\sigma N))=-\dfrac{1}{\lambda}\int_{t}^{t+h}||\mathcal{Z}(s)||^{2}ds-\int_{t}^{t+h}((D_{X}^{2}F(Y(s))\mathcal{Y}(s),\mathcal{Y}(s)))ds\\
+\int_{t+h}^{T}[((D_{X}^{2}F(Y^{h}(s))\mathcal{Y}_{h}(s),\mathcal{Y}_{h}(s)))-((D_{X}^{2}F(Y(s))\mathcal{Y}(s),\mathcal{Y}(s)))]ds\\
+((D_{X}^{2}F_{T}(Y^{h}(T))\mathcal{Y}_{h}(T),\mathcal{Y}_{h}(T)))-((D_{X}^{2}F_{T}(Y(T))\mathcal{Y}(T),\mathcal{Y}(T))),
\end{multline*}
and from previous estimates and standard arguments we obtain the
estimate (\ref{eq:5-1}). This concludes the proof. 
\end{proof}

\subsection{BELLMAN EQUATION}

We can now state the main result 
\begin{theorem}
We make the assumptions of Proposition \ref{eq:5-1}. The value function
$V(X,t)$ is differentiable in $t$, with H\"older continuous derivative
and is a solution of the Bellman equation 
\begin{align}
\dfrac{\partial V}{\partial t}+\dfrac{1}{2}((D_{X}^{2}V(X,t)\sigma N,\sigma N))-\dfrac{1}{2\lambda}||D_{X}V(X,t)||^{2}+F(X)=0,\label{eq:5-10}\\
\nonumber
V(X,T)=F_{T}(X)
\end{align}
where $N$ is a standard Gaussian, which is independent of the filtration
$\mathcal{F}^{s},$ and in particular of $X$.
\end{theorem}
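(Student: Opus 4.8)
The plan is to combine the dynamic programming principle \eqref{eq:3-8} with the second order Taylor expansion \eqref{eq:4-26} of $V$ in the state variable, and to extract the infinitesimal generator by letting the time step $h\to 0^+$. Fix $X$ which is $\mathcal{F}^{t}$ measurable and abbreviate $Y(s)=Y_{X,t}(s)$, $Z(s)=Z_{X,t}(s)$, so that by Theorem \ref{Theo3-1} we have $D_{X}V(X,t)=Z(t)$, and by \eqref{eq:2-51}
\[
\Delta X := Y(t+h)-X = -\frac{1}{\lambda}\int_{t}^{t+h} Z(\tau)\,d\tau + \sigma(w(t+h)-w(t)).
\]
Since $Y(t+h)$ is $\mathcal{W}_{X,t}^{t+h}\subseteq\mathcal{F}^{t+h}$ measurable, the increment $\Delta X$ is $\mathcal{F}^{t+h}$ measurable, so the expansion \eqref{eq:4-26}, written at the base point $(X,t+h)$ with increment $\Delta X$, applies and gives
\[
V(Y(t+h),t+h)-V(X,t+h) = ((\Delta X, D_{X}V(X,t+h))) + \tfrac{1}{2}((D_{X}^{2}V(X,t+h)\Delta X, \Delta X)) + O(\|\Delta X\|^{2+\delta}).
\]

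Next I would track the orders in $h$. The drift part of $\Delta X$ is $O(h)$ in $\mathcal{H}$ while the noise part $\sigma(w(t+h)-w(t))$ is $O(h^{1/2})$, so $\|\Delta X\|=O(h^{1/2})$ and the remainder $O(\|\Delta X\|^{2+\delta})=O(h^{1+\delta/2})=o(h)$. For the first order term, the drift contribution, after dividing by $h$, tends to $-\tfrac{1}{\lambda}\|D_{X}V(X,t)\|^{2}$ using $Z(t)=D_{X}V(X,t)$ and the time continuity of $D_{X}V$ from Proposition \ref{prop:3-6}. The noise contribution $((\sigma(w(t+h)-w(t)), D_{X}V(X,t+h)))$ \emph{vanishes}: by Theorem \ref{Theo3-1} the vector $D_{X}V(X,t+h)=Z_{X,t+h}(t+h)$ is $\sigma(X)$ measurable, hence $\mathcal{F}^{t}$ measurable and independent of the mean-zero increment $w(t+h)-w(t)$, so the expectation defining this inner product is zero.

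The crux is the second order term. Expanding the quadratic form in the drift and noise pieces of $\Delta X$, the drift-drift and drift-noise contributions are $O(h^{2})$ and $O(h^{3/2})$ respectively (the operator $D_{X}^{2}V$ being bounded by Proposition \ref{prop:4-1} and Theorem \ref{theo4-1}), hence $o(h)$; only the noise-noise piece survives at order $h$. After factoring out $h$ this piece is exactly $h\,((D_{X}^{2}V(X,t+h)\sigma\tfrac{w(t+h)-w(t)}{h^{1/2}}, \sigma\tfrac{w(t+h)-w(t)}{h^{1/2}}))$, and Proposition \ref{prop:5-1}, estimate \eqref{eq:5-1}, lets me replace it by $h\,((D_{X}^{2}V(X,t)\sigma N, \sigma N))$ up to an error $O(h^{1+\delta/2})=o(h)$. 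This is the main obstacle of the whole argument: the normalized increment at time $t+h$ behaves like a fresh Gaussian that is not measurable with respect to the filtration carried by $X$, so the quadratic form cannot be evaluated naively; Proposition \ref{prop:5-1} is precisely the device that disentangles the randomness of the Wiener increment from the randomness carried by the law of $X$ and transfers the second order derivative back to time $t$ acting on an independent Gaussian $\sigma N$.

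Finally I would assemble the pieces. Dividing the identity coming from \eqref{eq:3-8} by $h$, passing to the limit, and using the time continuity to get $\tfrac{1}{h}\int_{t}^{t+h}\|Z\|^{2}\to\|D_{X}V(X,t)\|^{2}$ and $\tfrac{1}{h}\int_{t}^{t+h}F(Y)\to F(X)$, yields
\[
\lim_{h\to 0^{+}}\frac{V(X,t+h)-V(X,t)}{h} = \frac{1}{2\lambda}\|D_{X}V(X,t)\|^{2} - F(X) - \frac{1}{2}((D_{X}^{2}V(X,t)\sigma N, \sigma N)) =: G(X,t),
\]
so $G$ is the right time-derivative of $V$; rearranging reproduces \eqref{eq:5-10}, while the terminal condition $V(X,T)=F_{T}(X)$ is the second line of \eqref{eq:3-8}. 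To upgrade the one-sided limit to genuine differentiability with Hölder continuous derivative, I would note that $V(X,\cdot)$ is continuous (indeed Lipschitz) in time by Proposition \ref{prop:3-6}, while $G(X,\cdot)$ is Hölder continuous in $t$: the term $\|D_{X}V(X,t)\|^{2}$ inherits Hölder-$\tfrac{1}{2}$ regularity from \eqref{eq:3-90}, the quadratic form $((D_{X}^{2}V(X,t)\sigma N, \sigma N))$ is Hölder-$\tfrac{\delta}{2}$ by Proposition \ref{prop:5-1}, and $F(X)$ is constant in $t$. Since a continuous function whose right derivative exists everywhere and is itself continuous is automatically $C^{1}$ with that derivative, we conclude $\partial_{t}V=G$, and the Hölder modulus of $G$ furnishes the claimed time regularity of $\partial_{t}V$.
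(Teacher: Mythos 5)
Your proposal is correct and follows the paper's own proof essentially step for step: the dynamic programming identity \eqref{eq:3-8} combined with the second-order expansion \eqref{eq:4-26} at the base point $(X,t+h)$, the measurability/independence argument killing the first-order noise contribution, the order count in $h$ leaving only the noise--noise part of the quadratic form, and Proposition \ref{prop:5-1} via \eqref{eq:5-51} and \eqref{eq:5-1} to replace the normalized Wiener increment by the fresh Gaussian $N$ at time $t$. The only cosmetic differences are that you invoke the already-established H\"older estimate \eqref{eq:3-90} of Proposition \ref{prop:3-6} where the paper re-derives the same $h^{1/2}$ modulus as \eqref{eq:5-15} by differentiating the dynamic programming principle, and that you make explicit the standard ``continuous right derivative implies differentiability'' upgrade which the paper leaves implicit --- both are legitimate and, if anything, slight streamlinings.
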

\begin{proof}
We go back to the optimality principle (\ref{eq:3-8}). We write 
\begin{equation}
V(X,t)-V(X,t+h)=\dfrac{1}{2\lambda}\int_{t}^{t+h}||Z(s)||^{2}ds+\int_{t}^{t+h}F(Y(s))ds+V(Y(t+h),t+h)-V(X,t+h)\label{eq:5-11}
\end{equation}
Next, using Theorem $\ref{theo4-1},$ we write 
\begin{multline*}
V(Y(t+h),t+h)-V(X,t+h)=((D_{X}V(X,t+h),Y(t+h)-X))+((D_{X}^{2}V(X,t+h)(Y(t+h)-X),Y(t+h)-X))\\
+\int_{0}^{1}\int_{0}^{1}\theta(((D_{X}^{2}V(X+\theta\mu(Y(t+h)-X),t+h)-D_{X}^{2}V(X,t+h))(Y(t+h)-X),Y(t+h)-X))d\theta d\mu.
\end{multline*}
But
\begin{equation*}
((D_{X}V(X,t+h),Y(t+h)-X))=-\dfrac{1}{\lambda}((D_{X}V(X,t+h),\int_{t}^{t+h}Z(s)ds))
\end{equation*}
since $D_{X}V(X,t+h)$ is independent of $w(t+h)-w(t)$. From \eqref{eq:4-3}
we can assert that 
\begin{multline*}
|(((D_{X}^{2}V(X+\theta\mu(Y(t+h)-X),t+h)-D_{X}^{2}V(X,t+h))(Y(t+h)-X),Y(t+h)-X))|
\leq C||Y(t+h)-X||^{1+\delta}.
\end{multline*}
Also, thanks to \eqref{eq:5-51},
\begin{multline*}
((D_{X}^{2}V(X,t+h)(Y(t+h)-X),Y(t+h)-X))=\dfrac{1}{\lambda^{2}}((D_{X}^{2}V(X,t+h)\int_{t}^{t+h}Z(s)ds,\int_{t}^{t+h}Z(s)ds))
\\+h((D_{X}^{2}V(X,t+h)\sigma N,\sigma N)),
\end{multline*}
so 
\begin{multline}
|\dfrac{1}{h}(V(X,t)-V(X,t+h))-\dfrac{1}{2\lambda}\dfrac{\int_{t}^{t+h}||Z(s)||^{2}}{h}ds-\dfrac{\int_{t}^{t+h}F(Y(s))ds}{h}\label{eq:5-12} \\
+\dfrac{1}{\lambda}((D_{X}V(X,t+h),\dfrac{\int_{t}^{t+h}Z(s)ds}{h}))-((D_{X}^{2}V(X,t+h)\sigma N,\sigma N))|\leq C(1+||X||^{2})h+Ch^{\delta}.
\end{multline}
We have $|\dfrac{\int_{t}^{t+h}F(Y(s))ds}{h}-F(X)|\leq C(1+||X||^{2})h.$
Next $Z(s)=Z_{Xt}(s)$ and according to (\ref{eq:3-101}) we have
$||Z(s)-Z_{Xs}(s)||\leq C(1+||X||)(s-t)^{\frac{1}{2}}$. Moreover
$Z_{Xs}(s)=D_{X}V(X,s),$ so we must estimate the difference $D_{X}V(X,s)-D_{X}V(X,t).$
To do that, we go to (\ref{eq:3-8}) and we differentiate in $X.$
This is possible in view of the smoothness which is available.
We can start by considering $X+\theta\tilde{X}$ with $\tilde{X}$
$\mathcal{F}^{t}$ measurable and differentiate in $\theta.$ We obtain
the formula 
\begin{multline}
((D_{X}V(X,t)-D_{X}V(X,t+h),\tilde{X}))=\frac{1}{\lambda}\int_{t}^{t+h}((Z(s),\mathcal{\tilde{Z}}(s)))ds+\label{eq:5-13}\\
+\int_{t}^{t+h}((D_{X}F(Y(s)),\mathcal{\tilde{Y}}(s)))ds-\dfrac{1}{\lambda}((D_{X}V(Y(t+h),t+h),\int_{t}^{t+h}\mathcal{\tilde{Z}}(s)ds))\\
+((D_{X}V(Y(t+h),t+h)-D_{X}V(X,t+h),\tilde{X}))
\end{multline}
with $\mathcal{\tilde{Y}}(s),\mathcal{\tilde{Z}}(s)$ defined by
\begin{equation}
\begin{array}{c}
\mathcal{\tilde{Y}}(s)=\tilde{X}-\dfrac{1}{\lambda}\int_{t}^{s}\mathcal{\tilde{Z}}(\tau)d\tau\\
\mathcal{\tilde{Z}}(s)=\mathbb{E}[D_{X}^{2}F_{T}(Y(T))\mathcal{\tilde{Y}}(T)+\int_{s}^{T}D_{X}^{2}F(Y(\tau))\mathcal{\tilde{Y}}(\tau)d\tau|\mathcal{F}^{s}]
\end{array}
\label{eq:5-14}
\end{equation}
and after some calculations which are now standard, we obtain 
\begin{equation*}
|((D_{X}V(X,t)-D_{X}V(X,t+h),\tilde{X}))|\leq Ch(1+||X||)||\tilde{X}||+Ch^{\frac{1}{2}}||\tilde{X}||.
\end{equation*}
Noting that $D_{X}V(X,t+h)$ is $\mathcal{F}^{t}$ measurable, the
above inequality implies 
\begin{equation}
||D_{X}V(X,t)-D_{X}V(X,t+h)||\leq Ch(1+||X||)+Ch^{\frac{1}{2}}.\label{eq:5-15}
\end{equation}
This implies $||Z(s)-D_{X}V(X,t)||\leq C(1+||X||)h^{\frac{1}{2}}.$
Using established estimates we obtain 
\begin{equation*}
|-\dfrac{1}{2\lambda}\dfrac{\int_{t}^{t+h}||Z(s)||^{2}}{h}ds+\dfrac{1}{\lambda}((D_{X}V(X,t+h),\dfrac{\int_{t}^{t+h}Z(s)ds}{h}))-\dfrac{1}{2\lambda}||D_{X}V(X,t)||^{2}|\leq C(1+||X||^{2})h^{\frac{1}{2}}.
\end{equation*}
Finally, from \eqref{eq:5-51} and \eqref{eq:5-1} we have 
\begin{equation}
|((D_{X}^{2}V(X,t+h)\sigma N,\sigma N))-((D_{X}^{2}V(X,t)\sigma N,\sigma N))|\leq Ch^{\frac{\delta}{2}}(1+||X||^{2\delta}).\label{eq:5-150}
\end{equation}
Collecting results we can assert that 
\begin{multline}
|\dfrac{1}{h}(V(X,t)-V(X,t+h))+\dfrac{1}{2\lambda}||D_{X}V(X,t)||^{2}-F(X)\label{eq:5-16} \\
-((D_{X}^{2}V(X,t)\sigma N,\sigma N))|\leq Ch^{\frac{\delta}{2}}(1+||X||^{2\delta})
\end{multline}
and $V(X,t)$ is differentiable in $t,$ with derivative given by
equation \eqref{eq:5-10}. Now from the estimates \eqref{eq:5-15}
and \eqref{eq:5-150}, it follows imediately from the equation \eqref{eq:5-10}
that $\dfrac{\partial V}{\partial t}$ is H\"older continuous in $t.$
The proof has been completed.
\end{proof}

\section{THE MASTER EQUATION }

\subsection{DEFINITION AND PRELIMINARIES}

The master equation concerns the equation for $\mathcal{U}(X,t)=D_{X}V(X,t).$
So it is a function from $\mathcal{H}\times(0,T)\rightarrow\mathcal{H}$.
We know already that with the definition of the filtration $\mathcal{F}^{t}$
above, for $X$ $\mathcal{F}^{t}$ measurable, then $\mathcal{U}(X,t)$
is $\mathcal{F}^{t}$ measurable. In fact $\mathcal{U}(X,t)$ is $\sigma(X)$
measurable. Moreover, by (\ref{eq:4-3}) this function is differentiable
in $X,$ with H\"older derivative
\begin{equation}
||D_{X}\mathcal{U}(X_{1},t)-D_{X}\mathcal{U}(X_{2},t)||\leq C||X_{1}-X_{2}||^{\delta}\label{eq:6-1}
\end{equation}
 and $D_{X}\mathcal{U}(X,t)$ $\in\mathcal{L}(\mathcal{H};\mathcal{H})$,
self adjoint. We have also shown in (\ref{eq:5-15}) that $\mathcal{U}(X,t)$
is H\"older in $t.$ To obtain further regularity, we need further assumptions.
We shall assume the existence of $D_{X}^{3}F(X),$ $D_{X}^{3}F_{T}(X)$.
These are objects in $\mathcal{L}(\mathcal{H};\mathcal{L}(\mathcal{H};\mathcal{H})).$
So if $\Xi,\Upsilon$ are in $\mathcal{H},$ the value $D_{X}^{3}F(X)\Xi$
belongs to $\mathcal{L}(\mathcal{H};\mathcal{H}).$ We can then consider
the result of this linear map on $\Upsilon,$ denoted $D_{X}^{3}F(X)\Xi\Upsilon$
which is an element of $\mathcal{H}$. We shall assume 
\begin{equation}
\begin{array}{c}
||D_{X}^{3}F(X_{1})\Xi\Upsilon-D_{X}^{3}F(X_{2})\Xi\Upsilon||\leq c||X_{1}-X_{2}||^{\delta}||\Xi||\,||\Upsilon||,\\
||D_{X}^{3}F_{T}(X_{1})\Xi\Upsilon-D_{X}^{3}F_{T}(X_{2})\Xi\Upsilon||\leq c_{T}||X_{1}-X_{2}||^{\delta}||\Xi||\,||\Upsilon||,\\
||D_{X}^{3}F(X)\Xi\Upsilon||\leq c||\Xi||\,||\Upsilon||,\:||D_{X}^{3}F_{T}(X)\Xi\Upsilon||\leq c_{T}||\Xi||\,||\Upsilon||.
\end{array}\label{eq:6-2}
\end{equation}
We make the measurability assumption 
\begin{equation}
D_{X}^{3}F(X),\,D_{X}^{3}F_{T}(X)\,\text{are }\sigma(X)\,\text{measurable}\label{eq:6-3}
\end{equation}
This must be interpreted as for the second derivative, cf.~(\ref{eq:4-102}).
The assumption means 
\begin{equation}
D_{X}^{3}F(X)\Xi\Upsilon=A_{X}(X)\Xi\Upsilon+B_{X\Xi}(X)\Upsilon+C_{X\Upsilon}(X)\Xi+D_{X\Xi\Upsilon}\label{eq:6-41}
\end{equation}
in which the maps $x\rightarrow A_{X}(x),\,x\rightarrow B_{X\Xi}(x),\,x\rightarrow C_{X\Upsilon}(x),$
$x\rightarrow D_{X\Xi\Upsilon}$ are deterministic from $\mathbb{R}^{n}$ to
 $\mathcal{L}(\mathbb{R}^{n};\mathcal{L}(\mathbb{R}^{n};\mathbb{R}^{n})),\,\mathcal{L}(\mathbb{R}^{n};\mathbb{R}^{n}),\,\mathcal{L}(\mathbb{R}^{n};\mathbb{R}^{n})$
and $\mathbb{R}^{n},$ respectively.
The maps $\Xi\rightarrow B_{X\Xi}(x),\,\Upsilon\rightarrow C_{X\Upsilon}(x)$
are linear and $\Xi,\Upsilon\rightarrow D_{X\Xi\Upsilon}$ is bilinear.
It follows that $D_{X}^{3}F(X)\Xi\Upsilon$ is $\sigma(X,\Xi,\Upsilon)$
measurable.

\subsection{THE MASTER EQUATION}

Our objective is to prove the following 
\begin{theorem}
\label{theo6-1} We make the assumptions of Proposition \ref{prop:5-1}
and \eqref{eq:6-2}, \eqref{eq:6-3}. Then $\mathcal{U}(X,t)$ is
differentiable in $t,$ and $((D_{X}\mathcal{U}(X,t)\sigma N,\sigma N))$
is differentiable in $X$. Moreover it is the solution of the following \emph{Master Equation}:
\begin{equation}
\begin{array}{c}
\dfrac{\partial\mathcal{U}}{\partial t}+\dfrac{1}{2}D_{X}((D_{X}\mathcal{U}(X,t)\sigma N,\sigma N))-\dfrac{1}{\lambda}D_{X}\mathcal{U}(X,t)\mathcal{\,U}(X,t)+D_{X}F(X)=0\\
\mathcal{U}(X,T)=D_{X}F_{T}(X)
\end{array}\label{eq:6-4}
\end{equation}
We have 
\begin{equation}
||\dfrac{\partial\mathcal{U}}{\partial t}(X,t)||\leq C(1+||X||)\label{eq:6-5}
\end{equation}
 and $\dfrac{\partial\mathcal{U}}{\partial t}(X,t)$ is $\sigma(X)$
measurable. Also $\dfrac{\partial\mathcal{U}}{\partial t}$ is H\"older
in $X.$
\end{theorem}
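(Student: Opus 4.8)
The plan is to obtain the master equation by differentiating the Bellman equation \eqref{eq:5-10} with respect to $X$. Writing $\mathcal{U}=D_{X}V$ and recalling from Theorem \ref{theo4-1} that $D_{X}\mathcal{U}=D_{X}^{2}V$ is a bounded self-adjoint operator, a formal term-by-term differentiation gives $D_{X}(\partial_{t}V)=\partial_{t}\mathcal{U}$, $\tfrac{1}{2}D_{X}((D_{X}^{2}V\sigma N,\sigma N))=\tfrac{1}{2}D_{X}((D_{X}\mathcal{U}\sigma N,\sigma N))$, $-\tfrac{1}{2\lambda}D_{X}\|D_{X}V\|^{2}=-\tfrac{1}{\lambda}D_{X}\mathcal{U}\,\mathcal{U}$ (using self-adjointness), and $D_{X}F$; summing these recovers \eqref{eq:6-4}, while the terminal condition is inherited from $V(\cdot,T)=F_{T}$. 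The substance of the proof is therefore to justify that each of these derivatives exists and that the interchange $D_{X}\partial_{t}=\partial_{t}D_{X}$ is legitimate.

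The crucial regularity input is that $X\mapsto((D_{X}\mathcal{U}(X,t)\sigma N,\sigma N))$ is differentiable. I would establish this by differentiating the linear-quadratic system \eqref{eq:5-2} with respect to the base point $X$, in the spirit of the passage from \eqref{eq:4-6} to \eqref{eq:4-26} that produced $D_{X}^{2}V$. Fixing an $\mathcal{F}^{t}$-measurable direction $\tilde{X}$ and differentiating at $X+\epsilon\tilde{X}$, the optimal trajectory $Y_{X,t}$ contributes its first-order sensitivity (a process of the type $Y'$ in \eqref{eq:4-15}), while the coefficients $D_{X}^{2}F(Y),\,D_{X}^{2}F_{T}(Y)$ contribute forcing terms of the form $D_{X}^{3}F(Y)Y'$ and $D_{X}^{3}F_{T}(Y)Y'$. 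This yields a further linearized forward-backward system whose unique solvability and a priori bounds follow from the convexity condition $\lambda_{T}>0$ of \eqref{eq:2-9} together with the boundedness in \eqref{eq:6-2}; the H\"older bounds in \eqref{eq:6-2} then give a Taylor estimate of exactly the type \eqref{eq:4-26}, delivering the gradient $D_{X}((D_{X}\mathcal{U}\sigma N,\sigma N))$ together with its uniform boundedness, its H\"older continuity, and its $\sigma(X)$-measurability (the latter by the bookkeeping of Proposition \ref{prop:3-31} and the decomposition \eqref{eq:6-41}, carried out conditionally on $\sigma(N)$ since $N$ is independent of $\mathcal{F}^{t}$).

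With this regularity secured, I would write the Bellman equation as $\partial_{t}V=-\tfrac{1}{2}((D_{X}^{2}V\sigma N,\sigma N))+\tfrac{1}{2\lambda}\|D_{X}V\|^{2}-F(X)$, whose right-hand side is now differentiable in $X$ (the middle term because $D_{X}V$ is $C^{1}$ by Theorem \ref{theo4-1}). Differentiating produces a formula for $D_{X}\partial_{t}V$ equal to the non-time terms of \eqref{eq:6-4}, so it remains only to commute $D_{X}$ and $\partial_{t}$, that is, to identify $D_{X}\partial_{t}V$ with $\partial_{t}D_{X}V=\partial_{t}\mathcal{U}$. This I would justify either by a standard interchange-of-derivatives argument, using that $\partial_{t}V$ is $C^{1}$ in $X$ and $D_{X}V$ is (H\"older) continuous in $t$ by \eqref{eq:5-15}, or more directly by dividing the differentiated optimality principle \eqref{eq:5-13} by $h$ and passing to the limit: the first-order contribution of $\sigma(w(t+h)-w(t))$ vanishes in expectation (as $D_{X}^{2}V(X,t+h)$ is $\mathcal{F}^{t}$-measurable and the increment is independent with mean zero), its quadratic contribution reproduces $\tfrac{h}{2}D_{X}((D_{X}\mathcal{U}\sigma N,\sigma N))$ via \eqref{eq:5-51}, and \eqref{eq:5-15} controls the remainder. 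Either way $\mathcal{U}$ is differentiable in $t$ and satisfies \eqref{eq:6-4}.

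Finally, the quantitative claims are read off the equation. The bound \eqref{eq:6-5} follows since $\|D_{X}F(X)\|\leq C(1+\|X\|)$ by \eqref{eq:2-2}, $\|D_{X}\mathcal{U}\,\mathcal{U}\|\leq\|D_{X}^{2}V\|\,\|D_{X}V\|\leq C(1+\|X\|)$ by Proposition \ref{prop:4-1} and Theorem \ref{Theo3-1}, and $\|D_{X}((D_{X}\mathcal{U}\sigma N,\sigma N))\|\leq C$ from the second step; the $\sigma(X)$-measurability and the H\"older continuity in $X$ of $\partial_{t}\mathcal{U}$ transfer termwise from the corresponding properties of $D_{X}F$, of $D_{X}\mathcal{U}\,\mathcal{U}$, and of $D_{X}((D_{X}\mathcal{U}\sigma N,\sigma N))$, using \eqref{eq:4-3} and the H\"older assumptions \eqref{eq:6-2}. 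I expect the main obstacle to be the second paragraph: setting up the third-order linearized system correctly, tracking the independent Gaussian $N$ through the conditional expectations, and pushing the Taylor estimate one order beyond \eqref{eq:4-26} while retaining the precise $\sigma N$ structure needed for the term $\tfrac{1}{2}D_{X}((D_{X}\mathcal{U}\sigma N,\sigma N))$.
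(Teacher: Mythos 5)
Your proposal follows essentially the same route as the paper: the paper likewise obtains \eqref{eq:6-4} by differentiating the Bellman equation \eqref{eq:5-10} term by term, using self-adjointness of $D_X^2V$ to get $D_X\|\mathcal{U}\|^2=2D_X\mathcal{U}\,\mathcal{U}$, and establishing the key differentiability of $\Phi(X,t)=((D_X\mathcal{U}(X,t)\sigma N,\sigma N))$ exactly as you propose, namely by differentiating the linear--quadratic representation \eqref{eq:5-2}--\eqref{eq:6-9} in the base point, which produces the auxiliary linearized pairs $(\tilde{\mathcal{Y}},\tilde{\mathcal{Z}})$ and $(\mathcal{Y}',\mathcal{Z}')$ with third-derivative forcing terms $D_X^3F(Y)\tilde{\mathcal{Y}}\mathcal{Y}$ controlled by \eqref{eq:6-2}, \eqref{eq:6-3}, with the bounds \eqref{eq:6-5}, the $\sigma(X)$-measurability, and the H\"older continuity then read off termwise just as you describe. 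If anything, you are more explicit than the paper about justifying the interchange $D_X\partial_t=\partial_tD_X$ (the paper simply deduces time-differentiability of $\mathcal{U}$ from \eqref{eq:6-6} and \eqref{eq:6-14}), which is a welcome addition rather than a deviation.
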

\begin{proof}
Consider the various terms in equation (\ref{eq:5-10}). Consider
first $||D_{X}V(X,t)||^{2}.$ We first check the G\^ateaux derivative 
\[
\dfrac{d}{d\theta}||D_{X}V(X+\theta\tilde{X},t)||^{2}|_{\theta=0}=2((D_{X}V(X,t),D_{X}^{2}V(X,t)\tilde{X}))
\]
for any $\tilde{X}$ $\mathcal{F}^{t}$ measurable. Since $D_{X}^{2}V(X,t)$
is self adjoint we have $((D_{X}V(X,t),D_{X}^{2}V(X,t)\tilde{X}))=((D_{X}^{2}V(X,t)D_{X}V(X,t),\tilde{X}))$
and thus we obtain immediately 
\begin{equation}
D_{X}||\mathcal{U}(X,t)||^{2}=2D_{X}\mathcal{U}(X,t)\mathcal{\,U}(X,t)\label{eq:6-6}
\end{equation}
 We consider next $((D_{X}\mathcal{U}(X,t)\sigma N,\sigma N)).$ To
study this function we need a formula. Recalling (\ref{eq:5-2}) we
write successively 
\begin{equation}
\begin{array}{c}
\mathcal{Y}^{Xt}(s)  =\mathcal{\sigma}N-\dfrac{1}{\lambda}\int_{t}^{s}\mathcal{Z}^{Xt}(\tau)d\tau\\
\mathcal{Z}^{Xt}(s)=\mathbb{E}[D_{X}^{2}F_{T}(Y_{Xt}(T))\mathcal{Y}^{Xt}(T)+\int_{s}^{T}D_{X}^{2}F(Y_{Xt}(\tau))\mathcal{Y}^{Xt}(\tau)d\tau|\mathcal{F}^{s}\cup\sigma(N)]
\end{array}
\label{eq:6-7}
\end{equation}
in which we emphasize the dependence in the pair $X,t.$ Also 
\begin{equation}
\begin{array}{c}
Y_{Xt}(s)=X-\dfrac{1}{\lambda}\int_{t}^{s}Z_{Xt}(\tau)d\tau+\sigma(w(s)-w(t))\\
Z_{Xt}(s)=\mathbb{E}[D_{X}F_{T}(Y_{Xt}(T))+\int_{s}^{T}D_{X}F(Y_{Xt}(\tau))d\tau|\mathcal{F}^{s}]
\end{array}\label{eq:6-8}
\end{equation}
 Then the function $\Phi(X,t)=((D_{X}\mathcal{U}(X,t)\sigma N,\sigma N))$
is given by the formula 
\begin{equation}
\Phi(X,t)=\dfrac{1}{\lambda}\int_{t}^{T}||\mathcal{Z}^{Xt}(s)||^{2}ds+\int_{t}^{T}((D_{X}^{2}F(Y_{Xt}(s))\mathcal{Y}^{Xt}(s),\mathcal{Y}^{Xt}(s)))ds+((D_{X}^{2}F_{T}(Y_{Xt}(T))\mathcal{Y}^{Xt}(T),\mathcal{Y}^{Xt}(T))),\label{eq:6-9}
\end{equation}
cf.~\eqref{eq:5-5}.
 Thanks to the assumptions $(\ref{eq:6-2}),$ (\ref{eq:6-3}) we can
differentiate in $X.$ Let $\tilde{X}$ be $\mathcal{F}^{t}$ measurable.
We define the pair $\tilde{\mathcal{Y}}(s),\tilde{\mathcal{Z}}(s)$
by 
\begin{equation}
\begin{array}{c}
\tilde{\mathcal{Y}}(s)=\tilde{X}-\dfrac{1}{\lambda}\int_{t}^{s}\tilde{\mathcal{Z}(}\tau)d\tau\\
\tilde{\mathcal{Z}(}s)=\mathbb{E}[D_{X}^{2}F_{T}(Y(T))\tilde{\mathcal{Y}}(T)+\int_{s}^{T}D_{X}^{2}F(Y(\tau))\tilde{\mathcal{Y}}(\tau)d\tau|\mathcal{F}^{s}]
\end{array}\label{eq:6-10}
\end{equation}
and the pair $\mathcal{Y}'(s),$$\mathcal{Z}'(s)$ by the equations
\begin{multline}
\label{eq:6-11}\mathcal{Y}'(s)=-\dfrac{1}{\lambda}\int_{t}^{s}\mathcal{Z}'(\tau)d\tau,\\
\mathcal{Z}'(s)=\mathbb{E}[D_{X}^{3}F_{T}(Y(T))\tilde{\mathcal{Y}}(T)\mathcal{Y}(T)+D_{X}^{2}F_{T}(Y(T))\mathcal{Y}'(T)\\
+\int_{s}^{T}(D_{X}^{3}F(Y(\tau))\tilde{\mathcal{Y}}(\tau)\mathcal{Y}(\tau)+D_{X}^{2}F(Y(\tau))\mathcal{Y}'(\tau))d\tau|\mathcal{F}^{s}\cup\sigma(N)]
\end{multline}
 and we have 
\begin{multline*}
\dfrac{d}{d\theta}\Phi(X+\theta\tilde{X},t)|_{\theta=0}=\dfrac{2}{\lambda}\int_{t}^{T}((\mathcal{Z}(s),\mathcal{Z}'(s)))ds+\int_{t}^{T}((D_{X}^{3}F(Y(s))\tilde{\mathcal{Y}}(s)\mathcal{Y}(s),\mathcal{Y}(s)))ds\\
+2\int_{t}^{T}((D_{X}^{2}F(Y(s))\mathcal{Y}'(s),\mathcal{Y}(s)\,))ds+((D_{X}^{3}F_{T}(Y(T))\tilde{\mathcal{Y}}(T)\mathcal{Y}(T),\mathcal{Y}(T)))+2((D_{X}^{2}F(Y(T))\mathcal{Y}'(T),\mathcal{Y}(T)\,)).
\end{multline*}
We deduce that 
\begin{equation}
\begin{array}{c}
\sup_{t\leq s\leq T}||\mathcal{Y}(s)||,\:\sup_{t\leq s\leq T}||\mathcal{Z}(s)||\leq C,\\
\sup_{t\leq s\leq T}||\mathcal{\tilde{Y}}(s)||,\:\sup_{t\leq s\leq T}||\mathcal{\tilde{Z}}(s)||\leq C||\tilde{X}||,\\
\sup_{t\leq s\leq T}||\mathcal{Y}(s)||,\:\sup_{t\leq s\leq T}||\mathcal{Z}(s)||\leq C||\tilde{X}||.
\end{array}\label{eq:6-12}
\end{equation}
Therefore the map $\tilde{X}\rightarrow\dfrac{d}{d\theta}\Phi(X+\theta\tilde{X},t)|_{\theta=0}$
is linear and $|\dfrac{d}{d\theta}\Phi(X+\theta\tilde{X},t)|_{\theta=0}|\leq C||\tilde{X}||.$
Therefore $\Phi(X,t)$ is differentiable in $X$ and 
\begin{multline}
((D_{X}\Phi(X,t),\tilde{X}))=\dfrac{2}{\lambda}\int_{t}^{T}((\mathcal{Z}(s),\mathcal{Z}'(s)))ds+\int_{t}^{T}((D_{X}^{3}F(Y(s))\tilde{\mathcal{Y}}(s)\mathcal{Y}(s),\mathcal{Y}(s)))ds\\
+2\int_{t}^{T}((D_{X}^{2}F(Y(s))\mathcal{Y}'(s),\mathcal{Y}(s)\,))ds+((D_{X}^{3}F_{T}(Y(T))\tilde{\mathcal{Y}}(T)\mathcal{Y}(T),\mathcal{Y}(T)))+2((D_{X}^{2}F(Y(T))\mathcal{Y}'(T),\mathcal{Y}(T)\,)).\label{eq:6-13}
\end{multline}
We can slightly rearrange this formula. We note that $\mathcal{Z}'(s)=\mathbb{E}[\varUpsilon'(s)|\mathcal{F}^{s}\cup\sigma(N)]$
with 
\begin{multline*}
\varUpsilon'(s)=D_{X}^{3}F_{T}(Y(T))\tilde{\mathcal{Y}}(T)\mathcal{Y}(T)+D_{X}^{2}F_{T}(Y(T))\mathcal{Y}'(T)\\
+\int_{s}^{T}(D_{X}^{3}F(Y(\tau))\tilde{\mathcal{Y}}(\tau)\mathcal{Y}(\tau)+D_{X}^{2}F(Y(\tau))\mathcal{Y}'(\tau))d\tau
\end{multline*}
and $\dfrac{1}{\lambda}\int_{t}^{T}((\mathcal{Z}(s),\mathcal{Z}'(s)))ds=-\int_{t}^{T}((\dfrac{d\mathcal{Y}(s)}{ds},\varUpsilon'(s)\,)).$
Performing integration by parts, we finally obtain the formula
\begin{multline}
((D_{X}((D_{X}\mathcal{U}(X,t)\sigma N,\sigma N)),\tilde{X}))=-((\mathcal{Y}(T),D_{X}^{3}F_{T}(Y(T))\tilde{\mathcal{Y}}(T)\mathcal{Y}(T)\,)) \label{eq:6-14}\\
-\int_{t}^{T}((\mathcal{Y}(s),D_{X}^{3}F_{T}(Y(s))\tilde{\mathcal{Y}}(s)\mathcal{Y}(s)\,))ds\\
+ 2((\sigma N,D_{X}^{3}F_{T}(Y(T))\tilde{\mathcal{Y}}(T)\mathcal{Y}(T)+D_{X}^{2}F_{T}(Y(T))\mathcal{Y}'(T)+\int_{s}^{T}(D_{X}^{3}F(Y(\tau))\tilde{\mathcal{Y}}(\tau)\mathcal{Y}(\tau)+D_{X}^{2}F(Y(\tau))\mathcal{Y}'(\tau))d\tau)).
\end{multline}
Going back
to the Bellman equation \eqref{eq:5-10}, rewritten as 
\begin{equation*}
\dfrac{\partial V}{\partial t}+\dfrac{1}{2}((D_{X}\mathcal{U}(X,t)\sigma N,\sigma N))-\dfrac{1}{2\lambda}||\mathcal{U}(X,t)||^{2}+F(X)=0,
\end{equation*}
we deduce from \eqref{eq:6-6} and \eqref{eq:6-14} that $D_{X}V(X,t)$ is differentiable in $t,$ and equation \eqref{eq:6-4}
holds. We have $||D_{X}((D_{X}\mathcal{U}(X,t)\sigma N,\sigma N))||\leq C.$
From Proposition \ref{prop:4-1} we have $||D_{X}^{2}V(X,t)||\leq C.$
Since
$||D_{X}V(X,t)||\leq C(1+||X||)$, the estimate (\ref{eq:6-5}) follows. The fact that $\dfrac{\partial\mathcal{U}}{\partial t}$ is
H\"older in $X$ follows from the assumption (\ref{eq:6-2}) and formulas
(\ref{eq:6-6}) and (\ref{eq:6-14}), with tedious but straightforward calculations.
The proof has been completed. 
\end{proof}

\section{MEAN FIELD TYPE CONTROL PROBLEMS}

\subsection{FUNCTIONALS}

We apply the preceding results to the mean field type control problem
situation in which 
\begin{align}
F(X) & =F(\mathcal{L}_{X})=F(m)=\int_{\mathbb{R}^{n}}f(x,m)m(x)dx\label{eq:7-1}\\
F_{T}(X) & =F_{T}(\mathcal{L}_{X})=F_{T}(m)=\int_{\mathbb{R}^{n}}h(x,m)m(x)dx\nonumber 
\end{align}
We consider random variables with densities $m(x)$ belonging to $L^{2}(\mathbb{R}^{n}).$ We
assume differentiability in $m$ as follows $\dfrac{\partial F(m)}{\partial m}(x),$$\dfrac{\partial^{2}F(m)}{\partial m^{2}}(x_{1},x_{2})$,
$\dfrac{\partial^{3}F(m)}{\partial m^{3}}(x_{1},x_{2})(x)$ such that
the following expansion is valid 
\begin{multline}
F(m+\tilde{m})=F(m)+\int_{\mathbb{R}^{n}}\dfrac{\partial F(m)}{\partial m}(x)\tilde{m}(x)dx+\dfrac{1}{2}\int_{\mathbb{R}^{n}}\int_{\mathbb{R}^{n}}\dfrac{\partial^{2}F(m)}{\partial m^{2}}(x_{1},x_{2})\tilde{m}(x_{1})\tilde{m}(x_{2})dx_{1}dx_{2}\label{eq:7-2} \\
+\int_{0}^{1}\int_{0}^{1}\theta^{2}\mu\int_{\mathbb{R}^{n}}\int_{\mathbb{R}^{n}}\int_{\mathbb{R}^{n}}\dfrac{\partial^{3}F(m+\theta\mu\tilde{m})}{\partial m^{3}}(x_{1},x_{2})(x)\tilde{m}(x_{1})\tilde{m}(x_{2})\tilde{m}(x)dx_{1}dx_{2}dx.
\end{multline}
 We have analogous formulas for $F_{T}.$ In \cite{bensoussan2017interpretation} it is shown
that ($m$ represents the probability density of $X)$
\begin{equation}
D_{X}F(X)=D_{x}\dfrac{\partial F(m)}{\partial m}(X),\label{eq:7-3}
\end{equation}
\begin{equation}
D_{X}^{2}F(X)Z=D_{x}^{2}\dfrac{\partial F(m)}{\partial m}(X)Z+\mathbb{E}_{\tilde{X}\tilde{Z}}[D_{x}D_{\tilde{x}}\dfrac{\partial^{2}F(m)}{\partial m^{2}}(X,\tilde{X})\tilde{Z}]\label{eq:7-4}
\end{equation}
 in which $\tilde{X}, \tilde{Z}$ is an independent copy of $X, Z,$
and $\mathbb{E}_{\tilde{X}\tilde{Z}}$ is the expectation with respect to the
variables $\tilde{X}, \tilde{Z}$ leaving $X$ fixed. A more elaborate
formula can be given for $D_{X}^{3}F(X)\Xi\Upsilon.$ We state it
formally:
\begin{multline}
D_{X}^{3}F(X)\Xi\Upsilon=D_{x}\Upsilon^{*}D_{x}^{2}\dfrac{\partial F(m)}{\partial m}(X)\Xi \label{eq:7-5}\\
+D_{x}\mathbb{E}_{\tilde{X}\tilde{\Xi}}\tilde{\Xi}^{*}D_{\tilde{x}}D_{x}\dfrac{\partial^{2}F(m)}{\partial m^{2}}(X,\tilde{X})\Upsilon+D_{x}\mathbb{E}_{\tilde{X}\tilde{\Upsilon}}\tilde{\Upsilon}^{*}D_{\tilde{x}}D_{x}\dfrac{\partial^{2}F(m)}{\partial m^{2}}(X,\tilde{X})\Xi \\
+D_{x}\mathbb{E}_{\tilde{X}_{1}\tilde{\Xi}}\mathbb{E}_{\tilde{X}_{2}\tilde{\Upsilon}}\tilde{\Upsilon}^{*}D_{\tilde{x}_{1}}D_{\tilde{x}_{2}}\dfrac{\partial^{3}F(m)}{\partial m^{3}}(\tilde{X}_{1},\tilde{X}_{2})(X)\Xi.
\end{multline}
 With these formulas, one can easily check the measurability properties
(\ref{eq:2-4}), (\ref{eq:4-102}), (\ref{eq:6-3}).

\subsection{INTERPRETATION }

Let us write 
\begin{equation}
F(x,m)=\dfrac{\partial F(m)}{\partial m}(x)=f(x,m)+\int_{\mathbb{R}^{n}}\dfrac{\partial f(\xi,m)}{\partial m}(x)m(\xi)d\xi,\label{eq:7-6}
\end{equation}
\[
F_{T}(x,m)=\dfrac{\partial F_{T}(m)}{\partial m}(x)=h(x,m)+\int_{\mathbb{R}^{n}}\dfrac{\partial h(\xi,m)}{\partial m}(x)m(\xi)d\xi,
\]
 so that $D_{X}F(X)=D_{x}F(X,\mathcal{L}_{X}),$ $D_{X}F_{T}(X)=D_{x}F_{T}(X,\mathcal{L}_{X})$.
The system (\ref{eq:3-1}) becomes 
\begin{equation}
Y(s)=X-\dfrac{1}{\lambda}\int_{t}^{s}Z(\tau)d\tau+\sigma(w(s)-w(t))\label{eq:7-7}
\end{equation}
\[
Z(s)=\mathbb{E}[D_{x}F_{T}(Y(T),\mathcal{L}_{Y(T)})+\int_{s}^{T}D_{x}F(Y(\tau),\mathcal{L}_{Y(\tau)})d\tau\:|\mathcal{W}_{X,t}^{s}]
\]

To relate this system to a mean field type control problem, we associate
to (\ref{eq:7-7}) another system as follows. We consider a triple
$x,m,t$ where $x\in \mathbb{R}^{n},$ $m$ is a probability density on $\mathbb{R}^{n}$. Let us consider a stochastic process $y_{xmt}(s)$
with values in $\mathbb{R}^{n}$, which is adapted to $\mathcal{W}_{t}^{s}=\sigma(w(\tau)-w(t),t\leq\tau\leq s)$
and such that $y_{xmt}(t)=x.$ We write $y_{mt}(s)(x)=y_{xmt}(s)$
and call $y_{mt}(s)(.)\#m$ the image measure of $m$ by the random
function $y_{mt}(s)(.),$ called also the push forward probability
measure of $m$. It is simply the probability distribution of $y_{\xi mt}(s)$
when $\xi$ is random variable on $\mathbb{R}^{n}$, with probability distribution
$m,$ independent of $\mathcal{W}_{t}^{s}.$ We then define the system,
in which $y_{xmt}(s),z_{xmt}(s)$ are two processes with values in
$\mathbb{R}^{n}$ adapted to $\mathcal{W}_{t}^{s}$ such that 
\begin{equation}
\begin{array}{c}
y_{xmt}(s)=x-\dfrac{1}{\lambda}\int_{t}^{s}z_{xmt}(\tau)d\tau+\sigma(w(s)-w(t)),\\
z_{xmt}(s)=\mathbb{E}[D_{x}F_{T}(y_{xmt}(T),y_{mt}(T)(.)\#m)+\int_{s}^{T}D_{x}F(y_{xmt}(\tau),y_{mt}(\tau)(.)\#m)d\tau|\mathcal{W}_{t}^{s}].
\end{array}\label{eq:7-8}
\end{equation}

\subsection{MEAN FIELD TYPE CONTROL PROBLEM}

We first note that if we take $x=X$ and $m=\mathcal{L}_{X}$ then
$y_{X\mathcal{L}_{X}t}(s)=Y(s)$ and $z_{X\mathcal{L}_{X}t}(s)=Z(s),$
since clearly $y_{\mathcal{L}_{X}t}(s)(.)\#\mathcal{L}_{X}=\mathcal{L}_{y_{X\mathcal{L}_{X}t}(s)}.$
We now interpret (\ref{eq:7-8}) as a necessary and sufficient optimality condition
of a control problem. The controls are stochastic processes with values
in $\mathbb{R}^{n}$ adapted to $\mathcal{W}_{t}^{s}=\sigma(w(\tau)-w(t),\,t\leq\tau\leq s)$
and dependent on initial conditions $x,t.$ We denote a control by
$v_{xt}(s).$ We leave $x$ as an index rather than an argument, to
emphasize we are not considering deterministic feedback controls;
it is really an initial condition parameter. We assume $\mathbb{E}\int_{t}^{T}\int_{\mathbb{R}^{n}}|v_{xt}(s)|^{2}m(x)dxds<+\infty,$
in which, as above, $m$ is a probability density. So the space of
controls is the Hilbert space $L_{\mathcal{W}_{t}^{s}}^{2}(t,T;L^{2}(\Omega,\mathcal{A},\mathbb{P};L_{m}^{2}(\mathbb{R}^{n};\mathbb{R}^{n}))).$
We define the state $x_{x,t}(s)$ by 
\begin{equation}
x_{x,t}(s)=x+\int_{t}^{s}v_{xt}(\tau)d\tau+\sigma(w(s)-w(t)). \label{eq:7-9}
\end{equation}
We use the notation $x_{x,t}(s;v(\cdot))$ to emphasize the dependence
in the control. We write, as above $x_{t}(s;v(\cdot))(x)=x_{x,t}(s;v(\cdot))$
to focus on the function $x\rightarrow x_{x,t}(s;v(\cdot)).$ We shall
use the push forward probability $x_{t}(s;v(\cdot))(.)\#m.$ We then define
the cost function 
\begin{multline}
J_{mt}(v(\cdot))=\dfrac{\lambda}{2}\mathbb{E}\int_{t}^{T}\int_{\mathbb{R}^{n}}|v_{xt}(s)|^{2}m(x)dxds+\mathbb{E}\int_{t}^{T}\int_{\mathbb{R}^{n}}f(x_{x,t}(s;v(\cdot)),x_{t}(s;v(\cdot))(.)\#m)m(x)dxds \label{eq:7-10} \\
+\mathbb{E}\int_{\mathbb{R}^{n}}h(x_{x,t}(T;v(\cdot)),x_{t}(T;v(\cdot))(.)\#m)m(x)dx.
\end{multline}
If we compare \eqref{eq:7-10} with \eqref{eq:2-52} it is easy to
convince ourselves that they are identical, provided $m=\mathcal{L}_{X}.$
Indeed $v_{Xt}(s)$ is adapted to $\mathcal{W}_{X,t}^{s}$ that we
write $v(s)$ and $\mathbb{E}\int_{t}^{T}\int_{\mathbb{R}^{n}}|v_{xt}(s)|^{2}m(x)dxds=\int_{t}^{T}||v(s)||^{2}ds.$
Moreover $x_{X,t}(s;v(\cdot))=X_{X,t}(s;v(\cdot))$ and $x_{t}(s;v(\cdot))(.)\#\mathcal{L}_{X}=\mathcal{L}_{X_{X,t}(s;v(\cdot))}.$ Therefore 
\begin{equation}
\mathbb{E}\int_{t}^{T}\int_{\mathbb{R}^{n}}f(x_{x,t}(s;v(\cdot)),x_{t}(s;v(\cdot))(.)\#m)m(x)dxds=\int_{t}^{T}Ef(X(s);\mathcal{L}_{X(s)})ds=\int_{t}^{T}F(X(s))ds\label{eq:7-101}
\end{equation}
\[
\mathbb{E}\int_{\mathbb{R}^{n}}h(x_{x,t}(T;v(\cdot)),x_{t}(T;v(\cdot))(.)\#m)m(x)dx=Eh(X(T);\mathcal{L}_{X(T)})=F_{T}(X(T))
\]
 and thus $J_{mt}(v(\cdot))=J_{X,t}(v(\cdot)).$ Conversely, we can write any
control adapted to $\mathcal{W}_{X,t}^{s}$ as $v_{Xt}(s)$ where $v_{xt}(s)$ is
adapted to $\mathcal{W}_{t}^{s}.$ Therefore the value function $V(X,t)$
depends only on the law of $X,$ and 
\begin{equation}
V(X,t)=V(m,t)=\inf_{v(\cdot)}J_{mt}(v(\cdot)).\label{eq:7-11}
\end{equation}
We can then compute the G\^ateaux derivative of $J_{mt}(v(\cdot))$ in the
Hilbert space\\ $L_{\mathcal{W}_{t}^{s}}^{2}(t,T;L^{2}(\Omega,\mathcal{A},\mathbb{P};L_{m}^{2}(\mathbb{R}^{n};\mathbb{R}^{n}))).$
We have the following result which mimics that of Proposition \ref{prop:2-1} .
\begin{prop}
\label{prop:7-1}We assume that the funtions $F(X)$ and $F_{T}(X)$
defined by (\ref{eq:7-1}) satisfy (\ref{eq:2-1}). We have 
\begin{multline}
D_{v(\cdot);xmt}(s)=\lambda v_{xt}(s)+\mathbb{E}[D_{x}F_{T}(x_{x,t}(T;v(\cdot));x_{t}(T;v(\cdot))(.)\#m) \label{eq:7-12}\\
+\int_{s}^{T}D_{x}F(x_{x,t}(\tau;v(\cdot));x_{t}(\tau;v(\cdot))(.)\#m)d\tau\,|\mathcal{W}_{t}^{s}].
\end{multline}
\end{prop}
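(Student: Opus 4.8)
The plan is to reproduce the variational argument behind Proposition~\ref{prop:2-1}, now carried out directly in the mean field (un-lifted) formulation (\ref{eq:7-9})--(\ref{eq:7-10}). First I would fix a perturbation direction $\phi_{xt}(\cdot)$ in the control space $L^2_{\mathcal{W}_t^s}(t,T;L^2(\Omega,\mathcal{A},\mathbb{P};L^2_m(\mathbb{R}^n;\mathbb{R}^n)))$ and form $x_{x,t}(s;v+\epsilon\phi)$. Since the state (\ref{eq:7-9}) is affine in the control, its variation is the field $\eta_{xt}(s):=\partial_\epsilon x_{x,t}(s;v+\epsilon\phi)|_{\epsilon=0}=\int_t^s\phi_{xt}(\tau)\,d\tau$. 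Differentiating the quadratic term of (\ref{eq:7-10}) gives $\lambda\,\mathbb{E}\int_t^T\int v_{xt}(s)\cdot\phi_{xt}(s)\,m(x)\,dx\,ds$, which already isolates the first summand $\lambda v_{xt}(s)$ of the claimed gradient.

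The crux is differentiating the measure-dependent terms. The key simplification is that, because $m_s:=x_t(s;v)(\cdot)\#m$ is by definition the law of $x_{\xi,t}(s;v)$ with $\xi\sim m$ independent of $\mathcal{W}_t^s$, the running cost collapses to $\mathbb{E}\int f(x_{x,t}(s),m_s)\,m(x)\,dx=\int f(y,m_s)\,m_s(dy)=F(m_s)$, and the terminal term to $F_T(m_T)$. I would then differentiate $F$ and $F_T$ along the perturbed flow of measures $m_s^\epsilon:=x_t(s;v+\epsilon\phi)(\cdot)\#m$. Reading off the variation of the pushforward, for any test function $g$ one has $\partial_\epsilon\big(\int g\,dm_s^\epsilon\big)\big|_0=\mathbb{E}\int Dg(x_{x,t}(s))\cdot\eta_{xt}(s)\,m(x)\,dx$; taking $g=\frac{\partial F(m_s)}{\partial m}$ and invoking (\ref{eq:7-6}) and (\ref{eq:7-3}), which give $D_y\frac{\partial F(m_s)}{\partial m}(y)=D_xF(y,m_s)$, yields $\partial_\epsilon F(m_s^\epsilon)|_0=\mathbb{E}\int D_xF(x_{x,t}(s),m_s)\cdot\eta_{xt}(s)\,m(x)\,dx$, and analogously for $F_T$.

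It then remains to substitute $\eta_{xt}(s)=\int_t^s\phi_{xt}(\tau)\,d\tau$, apply Fubini to interchange the $s$- and $\tau$-integrals, and collect the coefficient of $\phi_{xt}(\tau)$. Since $\phi_{xt}(\tau)$ is $\mathcal{W}_t^\tau$-measurable while the measures $m_s,m_T$ are deterministic, the tower property lets me replace each coefficient by its conditional expectation given $\mathcal{W}_t^\tau$, which produces exactly (\ref{eq:7-12}); this also confirms that $D_{v(\cdot);xmt}(\cdot)$ lies in the control space, being $\mathcal{W}_t$-adapted. Identifying this expression as the representative of the G\^ateaux derivative in the inner product $\mathbb{E}\int_t^T\int(\cdot)\cdot(\cdot)\,m(x)\,dx\,ds$ completes the argument.

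I expect the main obstacle to be the rigorous justification of differentiation under the spatial integrals and of the chain rule along the nonlinear path $\epsilon\mapsto m_s^\epsilon$ --- in particular, verifying that the ``direct'' and ``through-the-measure'' variations recombine into the single gradient $D_xF(\cdot,m_s)=D_x\frac{\partial F(m_s)}{\partial m}$. The cleanest rigorous route is to pass to the lifted picture, where $F(m_s^\epsilon)=F(X^\epsilon(s))$ is G\^ateaux differentiable by the rule of correspondence of Section~\ref{sec:FORMALISM}, so that $\partial_\epsilon F(X^\epsilon(s))|_0=\mathbb{E}[D_XF(X(s))\cdot\partial_\epsilon X^\epsilon(s)|_0]$ with $D_XF(X(s))=D_xF(X(s),m_s)$; the Lipschitz and growth hypotheses (\ref{eq:2-1})--(\ref{eq:2-2}) then supply the domination needed to pass the limit through the time integral. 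Everything else is the same bookkeeping as in Proposition~\ref{prop:2-1}.
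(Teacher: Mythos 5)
Your proposal is correct and, in its rigorous form, coincides with the paper's own (sketched) proof: the paper likewise perturbs the control $v(\cdot)\to v(\cdot)+\theta\tilde v(\cdot)$, notes the state varies affinely by $\theta\int_t^s\tilde v_{xt}(\tau)\,d\tau$, passes to the lifted functional via \eqref{eq:7-101} so that the assumed differentiability of $F(X)$, $F_T(X)$ (hypothesis \eqref{eq:2-1}) combined with the correspondence $D_XF(X)=D_xF(X,\mathcal{L}_X)$ from \eqref{eq:7-3} and \eqref{eq:7-6} yields the variation, and then performs the same Fubini/tower-property rearrangement as in Proposition \ref{prop:2-1}. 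Your preliminary ``direct'' computation through the pushforward flow $m_s^\epsilon$ is a heuristic the paper omits, and you correctly diagnose that its weak point --- the chain rule along the nonlinear path of measures --- is what forces the retreat to the lifted picture, which is precisely the route the paper takes.
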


\begin{proof}
We only sketch it. We recall that functions $F(x,m)$ and $F_{T}(x,m)$
are defined by (\ref{eq:7-6}). It is of course useful to connect
the calculation with that of Proposition \ref{prop:2-1}. The important
observation is the following: If we modify $v(\cdot)$ into $v(\cdot)+\theta\tilde{v}(.)$,
then the state $x_{x,t}(s;v(\cdot))$ is changed into $x_{x,t}(s;v(\cdot))+\theta\int_{t}^{s}\tilde{v}_{xt}(\tau)d\tau.$
Recalling the notation $x_{X,t}(s;v(\cdot))=X_{X,t}(s;v(\cdot))=X(s),$ this
amounts to changing $X(s)$ into $X(s)+\theta\int_{t}^{s}\tilde{v}(\tau)d\tau,$
where $\tilde{v}(s)=\tilde{v}_{Xt}(s)$ which is adapted $\mathcal{W}_{X,t}^{s}.$
From (\ref{eq:7-101}) it follows that $\int_{t}^{T}F(X(s))ds$ is
changed into $\int_{t}^{T}F(X(s)+\theta\int_{t}^{s}\tilde{v}(\tau)d\tau)ds$.
From the differentiability of $F(X)$ we see that 
\begin{multline*}
\dfrac{\int_{t}^{T}F(X(s)+\theta\int_{t}^{s}\tilde{v}(\tau)d\tau)ds-\int_{t}^{T}F(X(s)ds}{\theta}\rightarrow\int_{t}^{T}\mathbb{E}\,D_{x}F(X(s),\mathcal{L}_{X(s)}).\int_{t}^{s}\tilde{v}(\tau)d\tau)ds\\
=\mathbb{E}\int_{t}^{T}\int_{\mathbb{R}^{n}}D_{x}F(x_{x,t}(s;v(\cdot)),x_{t}(s;v(\cdot))(.)\#m).\int_{t}^{s}\tilde{v}_{xt}(\tau)d\tau\,m(x)dxds
\end{multline*}
 and similar results for $F_{T}.$ Performing rearrangements as in
the proof of Proposition \ref{prop:2-1}, we obtain formula
(\ref{eq:7-12}). 
\end{proof}
From formula (\ref{eq:7-12}), it is immediate that the optimal control
is $u_{xmt}(s)=-\dfrac{1}{\lambda}z_{xmt}(s),$ in which the pair $y_{xmt}(s),$$z_{xmt}(s)$
is the unique solution of (\ref{eq:7-8}).

\subsection{SYSTEM OF HJB-FP EQUATIONS }

We first define the probability $m_{mt}(s)=y_{mt}(s)(.)\#m$. We then
define an ordinary stochastic control problem, with $m,t$ as parameters
and $x,s$ as initial conditions. Let $v(\tau)$ be adapted to $\mathcal{W}_{s}^{\tau}=\sigma(w(\theta)-w(s),s\leq\theta\leq\tau).$
The state $x(\tau)$ is given by 
\begin{equation}
x(\tau)=x+\int_{s}^{\tau}v(\theta)d\theta+\sigma(w(\tau)-w(s))\label{eq:7-13}
\end{equation}
and we define the cost function by 
\begin{equation}
J_{mt}(x,s;v(\cdot))=\dfrac{\lambda}{2}\mathbb{E}\int_{s}^{T}|v(\tau)|^{2}d\tau+\mathbb{E}\int_{s}^{T}F(x(\tau),m_{mt}(\tau))d\tau+EF_{T}(x(T),m_{mt}(T)).\label{eq:7-14}
\end{equation}
In this functional $m_{mt}(\tau)$ is frozen . This is why the problem
(\ref{eq:7-13}), (\ref{eq:7-14}) is a standard stochastic control
problem. Writing the necessary and sufficient condition of optimality,
it is easy to check that the optimal control is $-\dfrac{1}{\lambda}z_{mt}(x,s;\tau)$
and the optimal trajectory is $y_{mt}(x,s;\tau),$ where $y_{mt}(x,s;\tau),$
$z_{mt}(x,s;\tau)$ are solutions of 
\begin{equation}
y_{mt}(x,s;\tau)=x-\dfrac{1}{\lambda}\int_{s}^{\tau}z_{mt}(x,s;\theta)d\theta+\sigma(w(\tau)-w(s)),\label{eq:7-15}
\end{equation}
\[
z_{mt}(x,s;\tau)=\mathbb{E}[D_{x}F_{T}(y_{mt}(x,s;T),m_{mt}(T))+\int_{\tau}^{T}D_{x}F(y_{mt}(x,s;\theta),m_{mt}(\theta))d\theta|\mathcal{W}_{s}^{\tau}].
\]
 Comparing with \eqref{eq:7-8} we see that $y_{xmt}(\tau)=y_{mt}(x,t;\tau)$
and $z_{xmt}(\tau)=z_{mt}(x,t;\tau).$ We next define 
\begin{equation}
u_{mt}(x,s)=\inf_{v(\cdot)}J_{mt}(x,s;v(\cdot))\label{eq:7-16}
\end{equation}
and thus we get the formula 
\begin{equation}
u_{mt}(x,s)=\dfrac{1}{2\lambda}\mathbb{E}\int_{s}^{T}|z_{mt}(x,s;\tau)|^{2}d\tau+\mathbb{E}\int_{s}^{T}F(y_{mt}(x,s;\tau),m_{mt}(\tau))d\tau+EF_{T}(y_{mt}(x,s;T),m_{mt}(T)).\label{eq:7-17}
\end{equation}
From \eqref{eq:7-16} and a simple application of the envelope theorem
we can write 
\begin{align}
D_{x}u_{mt}(x,s) & =\mathbb{E}[D_{x}F_{T}(y_{mt}(x,s;T),m_{mt}(T))+\int_{s}^{T}D_{x}F(y_{mt}(x,s;\theta),m_{mt}(\theta))d\theta]\label{eq:7-18}\\
 & =z_{mt}(x,s;s).\nonumber 
\end{align}
 Now we observe that $y_{mt}(y_{xmt}(s),s;\tau)=y_{xmt}(\tau)$. Also,
from \eqref{eq:7-18},
\begin{align*}
z_{mt}(y_{xmt}(s),s;s)&=\mathbb{E}[D_{x}F_{T}(y_{mt}(x,s;T),m_{mt}(T))+\int_{s}^{T}D_{x}F(y_{mt}(x,s;\theta),m_{mt}(\theta))d\theta]|_{x=y_{xmt}(s)}\\
 &=\mathbb{E}[D_{x}F_{T}(y_{xmt}(T),m_{mt}(T))+\int_{s}^{T}D_{x}F(y_{xmt}(\tau),m_{mt}(\tau))d\tau|\mathcal{W}_{t}^{s}]\\
 &=z_{xmt}(s)
\end{align*}
and thus we have obtained 
\begin{equation}
z_{xmt}(s)=D_{x}u_{mt}(y_{xmt}(s),s).\label{eq:7-19}
\end{equation}
It follows that the evolution of $y_{xmt}(s)$ is given by 
\begin{equation}
y_{xmt}(s)=x-\dfrac{1}{\lambda}\int_{t}^{s}D_{x}u_{mt}(y_{xmt}(\tau),\tau)d\tau+\sigma(w(s)-w(t))\label{eq:7-20}
\end{equation}
cf.~\eqref{eq:7-8}.
But then the push-forward probability density $m_{mt}(s)(.)=y_{mt}(s)(.)\#m$
is the solution of Fokker Planck equation
\begin{align}
\dfrac{\partial m}{\partial s}+Am-\dfrac{1}{\lambda}\text{div }(mDu) & =0\label{eq:7-21}\\
m(x,t)=m(x)\nonumber 
\end{align}
 where we have dropped the indices $m,t$ to simplify notation. On
the other hand, from the definition of $u_{mt}(x,s)$ (see \eqref{eq:7-16}),
and assuming appropriate regularity, it is the solution of the HJB equation 
\begin{align}
-\dfrac{\partial u}{\partial s}+Au+\dfrac{1}{2\lambda}|Du|^{2} & =F(x,m(s))\label{eq:7-22}\\
u(x,T) & =F_{T}(x,m(T))\nonumber 
\end{align}
 In (\ref{eq:7-21}), (\ref{eq:7-22}) the operator $A$ is defined
by $A\varphi(x)=-\dfrac{1}{2}\text{tr }(aD^{2}\varphi)(x),$ where
$a=\sigma\sigma^{*}.$ 

We obtain the classical system of HJB-FP equations of mean field type
control.

\subsection{INTERPRETATION OF $u_{mt}(x,t)$ }

In \cite{bensoussan2015control} in the case $\sigma=0,$ deterministic case, we have
proven that 
\begin{equation}
\dfrac{\partial V(m,t)}{\partial m}(x)=u_{mt}(x,t)\label{eq:7-23}
\end{equation}
Let us give a formal proof in the present case of this result. We
use 
\[
Z_{Xt}(t)=D_{X}V(X,t)
\]
 But from the above discussion $Z_{Xt}(t)=z_{X\mathcal{L}_{X}t}(t)=D_{x}u_{\mathcal{L}_{X}t}(X,t).$
On the other hand, since $V(X,t)=V(\mathcal{L}_{X},t),$ we have
also $D_{X}V(X,t)=D_{x}\dfrac{\partial V(\mathcal{L}_{X},t)}{\partial m}(X).$
Therefore
$D_{x}u_{\mathcal{L}_{X}t}(X,t)=D_{x}\dfrac{\partial V(\mathcal{L}_{X},t)}{\partial m}(X),$
which means 
\[
D_{x}u_{m}(x,t)=D_{x}\dfrac{\partial V(m,t)}{\partial m}(x)
\]
 from which we infer (\ref{eq:7-23}).

\newpage
\setcounter{page}{1}

\appendix
\section{Proofs of the statements in Section \ref{sec:FORMALISM}}
\subsection{Proof of Lemma \ref{Lemma2_2}}
\begin{proof}
As before, we introduce a pair of independent copy of $(X_{m},X_{m'})$, namely $(\tilde{X}_{m}, \tilde{X}_{m'})$, the relation (\ref{eq:2-112}) is now equivalent to 
\small{
\begin{align} \label{eq:2-114}
&\dfrac{d^{2}}{dt^{2}}u(m+t(m'-m))\\
& \quad =\mathbb{E}\left(\dfrac{\partial^{2}u}{\partial m^{2}}(m+t(m'-m))(X_{m'},\tilde{X}_{m'})\right)+\mathbb{E}\left(\dfrac{\partial^{2}u}{\partial m^{2}}(m+t(m'-m))(X_{m},\tilde{X}_{m})\right) \nonumber \\
& \quad \quad -\mathbb{E}\left(\dfrac{\partial^{2}u}{\partial m^{2}}(m+t(m'-m))(X_{m},\tilde{X}_{m}')\right)-\mathbb{E}\left(\dfrac{\partial^{2}u}{\partial m^{2}}(m+t(m'-m))(X_{m'},\tilde{X}_{m})\right),  \nonumber
\end{align}}
a simple application of mean value theorem gives that :
\small{\begin{align}\label{eq:2-116}
&\dfrac{d^{2}}{dt^{2}}u(m+t(m'-m)) \\
& \quad =\mathbb{E}\left(\int_{0}^{1}\int_{0}^{1}D_{\xi}D_{\eta}\dfrac{\partial^{2}u}{\partial m^{2}}(m+t(m'-m))(X_{m}+\alpha(X_{m'}-X_{m}),\tilde{X}_{m}+\beta(\tilde{X}_{m'}-\tilde{X}_{m}))(\tilde{X}_{m'}-\tilde{X}_{m}) \cdot (X_{m'}-X_{m}) d\alpha d\beta \right). \nonumber 
\end{align}}
Let $f(t)=u(m+t(m'-t))$, under the assumptions, $f$ is then $C^{2}$ in t. Therefore, we have
\[
f(1)=f(0)+f'(0)+\int_{0}^{1}\int_{0}^{1}tf''(st)dsdt.
\]
Hence, by combining (\ref{eq:2-1111}) and (\ref{eq:2-114}), we conclude with the claimed formula. 
\end{proof}
\subsection{Proof of Lemma \ref{lemma_2_3}}
\begin{proof}
Due to the symmetry of the matrix $D_{x}^{2}\dfrac{\partial u}{\partial m}(\mathcal{L}_{X})(x)$ for each $x \in \mathbb{R}^n$, we have
\[
D_{x}^{2}\dfrac{\partial u}{\partial m}(\mathcal{L}_{X})(X)Z \cdot Y=D_{x}^{2}\dfrac{\partial u}{\partial m}(\mathcal{L}_{X})(X)Y \cdot Z.
\]
On the other hand, we also have: 
\small{\begin{eqnarray*}
\mathbb{E}\left(D_{\xi}D_{\eta}\dfrac{\partial^{2}u}{\partial m^{2}}(\mathcal{L}_{X})(X,\tilde{X})\tilde{Z} \cdot Y \right)&=&\sum_{i,j}\mathbb{E} \left( D_{\xi_{i}}D_{\eta_{j}} \dfrac{\partial^{2}u}{\partial m^{2}}(\mathcal{L}_{X})(X,\tilde{X})\tilde{Z}_{j}Y_{i}\right) \\
&=& \sum_{i,j}\mathbb{E} \left(D_{\xi_{j}}D_{\eta_{i}}\dfrac{\partial^{2}u}{\partial m^{2}}(\mathcal{L}_{X})(X,\tilde{X})\tilde{Z}_{i}Y_{j}\right) \\
&=&\sum_{i,j}\mathbb{E} \left( D_{\eta_{j}}D_{\xi_{i}}\dfrac{\partial^{2}u}{\partial m^{2}}(\mathcal{L}_{X})(\tilde{X},X)\tilde{Z}_{i}Y_{j} \right) \\
&=& \sum_{i,j}\mathbb{E} \left( D_{\eta_{j}}D_{\xi_{i}}\dfrac{\partial^{2}u}{\partial m^{2}}(\mathcal{L}_{X})(X,\tilde{X})Z_{i}\tilde{Y}_{j} \right) =\mathbb{E} \left(D_{\xi}D_{\eta}\dfrac{\partial^{2}u}{\partial m^{2}}(\mathcal{L}_{X})(X,\tilde{X})\tilde{Y} \cdot Z\right) ,
 \end{eqnarray*}}
where the third equality follows by recalling another symmetry property: $\dfrac{\partial^{2}u}{\partial m^{2}}(\mathcal{L}_{X})(X,\tilde{X})=\dfrac{\partial^{2}u}{\partial m^{2}}(\mathcal{L}_{X})(\tilde{X},X)$; the fourth equality follows by noting that $(\tilde{X},\tilde{Y},\tilde{Z})$ is an independent copy of $(X,Y,Z)$. Hence, the symmetry of the bilinear functional is concluded. 
\end{proof}
\section{Proofs of the statements in Section \ref{ABSTRACT_CONTROL_PROBLEM}}
\subsection{Proof of Proposition \ref{prop:2-1}}
\begin{proof} Let $\tilde{v}(\cdot) \in L_{\mathcal{W}_{X,t}}^{2}(0,T;\mathcal{H})$, and we consider the perturbed objective functional $J_{X,t}(v+\theta\tilde{v})$. One has the
formula
\begin{align*}
J_{X,t}(v+\theta\tilde{v}) = & J_{X,t}(v)+ \dfrac{\lambda}{2}\, \theta^{2}\int_{t}^{T}||\tilde{v}(s)||^{2}ds \\
& + \theta\left\{ \int_{t}^{T}((\lambda v(s),\tilde{v}(s)))ds+\int_{t}^{T}((D_{X}F(X(s)),\int_{t}^{s}\tilde{v}(\tau)d\tau))ds+((D_{X}F_{T}(X(T)),\int_{t}^{T}\tilde{v}(\tau)d\tau))\right\} \\
& +\theta\int_{t}^{T}\int_{0}^{1}((D_{X}F\left(X(s)+\mu\theta\int_{t}^{s}\tilde{v}(\tau)d\tau\right)-D_{X}F(X(s)),\int_{t}^{s}\tilde{v}(\tau)d\tau)) d\mu ds \\
& + \theta\int_{0}^{1}((D_{X}F_{T}\left(X(T)+\mu\theta\int_{t}^{T}\tilde{v}(\tau)d\tau\right)-D_{X}F(X(T)),\int_{t}^{T}\tilde{v}(\tau)d\tau))d\mu .
\end{align*}
According to the assumptions (\ref{eq:2-1}), we obtain, as $\theta \rightarrow 0$,
\begin{align*}
\left|\int_{t}^{T}\int_{0}^{1}((D_{X}F\left(X(s)+\mu\theta\int_{t}^{s}\tilde{v}(\tau)d\tau\right)-D_{X}F(X(s)),\int_{t}^{s}\tilde{v}(\tau)d\tau))dsd\mu\right| & \leq \dfrac{c}{2} \, \theta\int_{t}^{T} \left| \int_{t}^{s}\tilde{v}(\tau)d\tau \right|^{2}ds \rightarrow 0 ; 
\end{align*} 
similarly, we can also see the convergence of $\int_{0}^{1}((D_{X}F_{T}\left(X(T)+\mu\theta\int_{t}^{T}\tilde{v}(\tau)d\tau\right)-D_{X}F(X(T)),\int_{t}^{T}\tilde{v}(\tau)d\tau))d\mu$ to $0$ as $\theta$ tends to $0$. Therefore, we get by using integration by parts, 
\begin{align*}
& \quad \lim_{\theta \rightarrow 0} \dfrac{J_{X,t}(v+\theta\tilde{v})-J_{X,t}(v)}{\theta} \\
& = \int_{t}^{T}((\lambda v(s),\tilde{v}(s)))ds
+\int_{t}^{T}((D_{X}F(X(s)),\int_{t}^{s}\tilde{v}(\tau)d\tau))ds+((D_{X}F_{T}(X(T)),\int_{t}^{T}\tilde{v}(\tau)d\tau)) \\
& =\int_{t}^{T}((\lambda v(s)+ \mathbb{E}\left[ D_{X}F_{T}(X(T))+\int_{s}^{T}D_{X}F(X(\tau))d\tau \Bigg| \mathcal{F}^{s} \right],\tilde{v}(s)))ds ,
\end{align*}
where the last line follows by a simple application of the tower property, and therefore the result (\ref{eq:2-8}) follows.
\end{proof}

\subsection{Proof of Proposition \ref{prop:2-2}}
\begin{proof}
Using $X_{1}(s)$ and $X_{2}(s)$ to represent the states corresponding to the controls $v_{1}$ and $v_{2}$ respectively, and invoking the formula (\ref{eq:2-8}) and the fact that both $v_{1}(s)$ and $v_{2}(s)$ are adapted to $\mathcal{W}_{X,t}^{s}$, 
\begin{align*}
& \int_{t}^{T}((D_{v}J_{X,t}(v_{1})(s)-D_{v}J_{X,t}(v_{2})(s),v_{1}(s)-v_{2}(s)\,))ds \\
= & \lambda\int_{t}^{T}||v_{1}(s)-v_{2}(s)||^{2}ds + \int_{t}^{T}((D_{X}F_{T}(X_{1}(T))-D_{X}F_{T}(X_{2}(T)) \\
& + \int_{s}^{T}(D_{X}F(X_{1}(\tau))-D_{X}F(X_{2}(\tau))) d\tau,\dfrac{d}{ds}(X_{1}(s)-X_{2}(s))\,))ds \\
= & \lambda\int_{t}^{T}||v_{1}(s)-v_{2}(s)||^{2}ds +((D_{X}F_{T}(X_{1}(T))-D_{X}F_{T}(X_{2}(T)),X_{1}(T)-X_{2}(T)\,)) \\
& +\int_{t}^{T}((D_{X}F(X_{1}(s))-D_{X}F(X_{2}(s)),X_{1}(s)-X_{2}(s)\,))ds \\
\geq & \lambda\int_{t}^{T}||v_{1}(s)-v_{2}(s)||^{2}ds-c'_{T}||X_{1}(T)-X_{2}(T)||^{2}-c'\int_{t}^{T}||X_{1}(s)-X_{2}(s)||^{2}ds  ,
\end{align*}
where the last inequality follows due to the assumption (\ref{eq:2-3}). Since $X_{1}(s)-X_{2}(s)=\int_{t}^{s}(v_{1}(\tau)-v_{2}(\tau))d\tau$, we get immediately the estimates: 
\begin{align*}
||X_{1}(T)-X_{2}(T)||^{2} \leq T \int_{t}^{T}||v_{1}(s)-v_{2}(s)||^{2}ds \text{ and } \int_{t}^{T}||X_{1}(s)-X_{2}(s)||^{2}ds \leq  \dfrac{T^{2}}{2}\int_{t}^{T}||v_{1}(s)-v_{2}(s)||^{2}ds .
\end{align*}
Therefore, 
\[
\int_{t}^{T}((D_{v}J_{X,t}(v_{1})(s)-D_{v}J_{X,t}(v_{2})(s),v_{1}(s)-v_{2}(s)\,))ds\geq \left(\lambda-c'T-c'_{T}\dfrac{T^{2}}{2}\right) \int_{t}^{T}||v_{1}(s)-v_{2}(s)||^{2}ds ,
\]
and the claim (\ref{eq:2-10}) is obtained. 
\end{proof}

\section{Proofs of the statements in Section \ref{sec:STUDY-OF-THE}}
\subsection{Proof of Proposition \ref{prop:3-4}}
\begin{proof} For simplicity, we omit the subscripts of $X$ and $t$ in $Y$ and $Z$. Let $\Upsilon(s)=D_{X}F_{T}(Y(T))+\int_{s}^{T}D_{X}F(Y(\tau))d\tau$, so $Z(s)=\mathbb{E}[\Upsilon(s)|\mathcal{W}_{X,t}^{s}]$. By the tower property, we have $((\Upsilon(s),Y(s)\,))=((Z(s),Y(s)\,))$. Then,
\[
\dfrac{d}{ds}((Z(s),Y(s)\,))=\dfrac{d}{ds}((\Upsilon(s),Y(s)\,))=-\dfrac{1}{\lambda}||Z(s)||^{2}-((D_{X}F(Y(s)),Y(s)\,)) ,
\]
and hence, by integrating the last equation from $t$ to $T$, we obtain: 
\begin{equation}
((X,Z(t)))=\dfrac{1}{\lambda}\int_{t}^{T}||Z(s)||^{2}ds+((D_{X}F_{T}(Y(T)),Y(T)))+\int_{t}^{T}((D_{X}F(Y(s)),Y(s)))ds . \label{eq:3-4}
\end{equation}
On the other hand, using the tower property again, by definition: 
\[
((X,Z(t)))=((X,D_{X}F_{T}(Y(T))+\int_{t}^{T}D_{X}F(Y(s))ds)) ,
\]
and then combining this with (\ref{eq:3-4}) and then telescoping, we have
\begin{align*}
& \quad \dfrac{1}{\lambda}\int_{t}^{T}||Z(s)||^{2}ds+((D_{X}F_{T}(Y(T))-D_{X}F_{T}(0),Y(T)))+\int_{t}^{T}((D_{X}F(Y(s))-D_{X}F(0),Y(s)))ds \\
& = ((X,D_{X}F_{T}(Y(T))-D_{X}F_{T}(0)+\int_{t}^{T}(D_{X}F(Y(s))-D_{X}F(0))ds)) \\
& \quad + ((X-Y(T),D_{X}F_{T}(0)))+\int_{t}^{T}((X-Y(s),D_{X}F(0)))ds .
\end{align*}
Thanks to the measurability assumption (\ref{eq:2-4}), both $D_{X}F_{T}(0)$ and $D_{X}F(0)$ are deterministic, hence we have by using the expression of $Y(T)$ in (\ref{eq:3-1}), 
\begin{align*} 
& \quad ((X-Y(T),D_{X}F_{T}(0)))+\int_{t}^{T}((X-Y(s),D_{X}F(0)))ds \nonumber \\
& = \dfrac{1}{\lambda}((D_{X}F_{T}(0),\int_{t}^{T}Z(\tau)d\tau))+\dfrac{1}{\lambda}\int_{t}^{T}((D_{X}F(0),\int_{t}^{s}Z(\tau)d\tau))ds , 
\end{align*}
and a simple application of Cauchy-Schwartz's inequality gives:
\begin{align} \label{prop 6 inequality 1}
& \quad \left|((X-Y(T),D_{X}F_{T}(0)))+\int_{t}^{T}((X-Y(s),D_{X}F(0)))ds \right| \nonumber \\
& \leq \dfrac{1}{\lambda}\sqrt{T} \left( ||D_{X}F_{T}(0)||+\dfrac{2}{3}T||D_{X}F(0)|| \right) \sqrt{\int_{t}^{T}||Z(s)||^{2}ds} .
\end{align}
In addition, using Lipschitz property (\ref{eq:2-1}), we have
\begin{align} \label{prop 6 inequality 2}
& \quad \left| ((X,D_{X}F_{T}(Y(T))-D_{X}F_{T}(0)+\int_{t}^{T}(D_{X}F(Y(s))-D_{X}F(0))ds)) \right| \nonumber \\
& \leq \left( c_{T}||Y(T)||+c\int_{t}^{T}||Y(s)||ds \right)||X|| \nonumber \\
& \leq (c_{T}+cT)||X||^{2}+\sqrt{T} \left( c_{T}+\dfrac{2}{3}cT \right)||X|| \left( ||\sigma||+ \frac{1}{\lambda} \sqrt{\int_{t}^{T}||Z(s)||^{2}ds} \right) ,
\end{align}
where the last inequality follows due to the fact that $||Y(s)|| \leq ||X|| + \frac{1}{\lambda} \sqrt{s} \sqrt{\int_t^s ||Z(\tau)||^2 d\tau} + ||\sigma|| \sqrt{s}$. On the other hand, using the quasi-convexity assumption (\ref{eq:2-3}), we also have:
\begin{align*}
& \quad \dfrac{1}{\lambda}\int_{t}^{T}||Z(s)||^{2}ds+((D_{X}F_{T}(Y(T))-D_{X}F_{T}(0),Y(T)))+\int_{t}^{T}((D_{X}F(Y(s))-D_{X}F(0),Y(s)))ds  \\
& \geq \dfrac{1}{\lambda}\int_{t}^{T}||Z(s)||^{2}ds-c'_{T}||Y(T)||^{2}-c'\int_{t}^{T}||Y(s)||^{2}ds .
\end{align*}
Note that, we can also have, for any $\epsilon>0$, 
\[
||Y(s)||^{2}\leq(||X||^{2}+||\sigma||^{2}(s-t)) \left( 1+\dfrac{1}{\epsilon} \right)+ (1+2\epsilon)\dfrac{1}{\lambda^{2}}(s-t)\int_{t}^{T}||Z(\tau)||^{2}d\tau ,
\]
with which we deduce that
\begin{align} \label{prop 6 inequality 3}
& \quad \dfrac{1}{\lambda}\int_{t}^{T}||Z(s)||^{2}ds-c'_{T}||Y(T)||^{2}-c'\int_{t}^{T}||Y(s)||^{2}ds \\
& \geq \dfrac{1}{\lambda} \left( 1-\dfrac{1}{\lambda}(1+2\epsilon)T \left( c'_{T}+c'\dfrac{T}{2} \right) \right) \int_{t}^{T}||Z(\tau)||^{2}d\tau-(c'_{T}+c'T)\left( 1 + \frac{1}{\epsilon} \right)||X||^{2}-||\sigma||^{2} \left( 1+\dfrac{1}{\epsilon} \right) T \left( c'_{T}+\dfrac{c'}{2}T \right) . \nonumber
\end{align}
Thanks to the assumption (\ref{eq:2-9}), we can find sufficiently small $\epsilon$ so that $\lambda-(1+2\epsilon)T \left( c'_{T}+c'\dfrac{T}{2} \right)>0$. From the preceding inequalities (\ref{prop 6 inequality 1}),(\ref{prop 6 inequality 2}) and (\ref{prop 6 inequality 3}), and then combining them, we obtain
\begin{align*}
& \quad \dfrac{1}{\lambda} \left( 1-\dfrac{1}{\lambda}(1+2\epsilon)T \left( c'_{T}+c'\dfrac{T}{2} \right) \right) \int_{t}^{T}||Z(\tau)||^{2}d\tau-(c'_{T}+c'T)\left( 1 + \frac{1}{\epsilon} \right)||X||^{2}-||\sigma||^{2} \left( 1+\dfrac{1}{\epsilon} \right) T \left( c'_{T}+\dfrac{c'}{2}T \right)  \\
& \leq (c_{T}+cT)||X||^{2}+\sqrt{T} \left( c_{T}+\dfrac{2}{3}cT \right)||X|| \left( ||\sigma||+ \frac{1}{\lambda} \sqrt{\int_{t}^{T}||Z(s)||^{2}ds} \right) \\
& \quad + \dfrac{1}{\lambda}\sqrt{T} \left( ||D_{X}F_{T}(0)||+\dfrac{2}{3}T||D_{X}F(0)|| \right) \sqrt{\int_{t}^{T}||Z(s)||^{2}ds} . 
\end{align*}
By solving this quadratic inequality we deduce that
\[
\int_{t}^{T}||Z(s)||^{2}ds\leq C(1+||X||^{2}),
\]
where $C$ is a generic constant depending on different constants of the model, but not on $X$; based on this, the first estimate (\ref{eq:3-3}) follows from the definition of $Y(s)$ in (\ref{eq:3-1}). While the second estimate follows from the definition of $Z(s)$ in (\ref{eq:3-1}) and the assumption (\ref{eq:2-2}). The third estimate for the value function in (\ref{eq:3-3}) follows immediately from the formula (\ref{eq:3-2}) and again the assumption (\ref{eq:2-2}). This concludes the proof.
\end{proof}

\subsection{Proof of Theorem \ref{Theo3-1}}
\begin{proof}
Let $X_{1}, X_{2} \in \mathcal{H}$ be $\mathcal{F}^{t}$-measurable. Consider the functionals $J_{X_{1},t}(v)$ and $J_{X_{2},t}(v)$, and denote by $Y_{1}(s)$ and $Y_{2}(s)$ the optimal states corresponding to the respective optimal controls $u_{1}(s)$ and $u_{2}(s)$ for these two functionals. Then 
\[
u_{1}(s)=-\dfrac{1}{\lambda}Z_{1}(s) \text{ and } u_{2}(s)=-\dfrac{1}{\lambda}Z_{2}(s) ,
\]
where $Z_i(s) = \mathbb{E} \left[  D_XF_T(Y_i(T)) + \int_s^T D_X F(Y_i(\tau)) d\tau \Bigg| \mathcal{W}_{X,t}^{s} \right]$ for $i=1,2$. It is clear that 
\[
V(X_{1},t)-V(X_{2},t)\leq J_{X_{1}t}(u_{2})-J_{X_{2}t}(u_{2}) .
\]
Noting that the trajectory starting from $(X_{1},t)$ subject to $u_{2}$ is simply $Y_{2}(s)+X_{1}-X_{2}$, we get immediately that 
\[
V(X_{1},t)-V(X_{2},t)\leq F_{T}(Y_{2}(T)+X_{1}-X_{2})-F_{T}(Y_{2}(T))+\int_{t}^{T}(F(Y_{2}(s)+X_{1}-X_{2})-F(Y_{2}(s)))ds .
\]
According to the assumptions (\ref{eq:2-1}), simple mean-value argument yields
\begin{align*}
& |F(Y_{2}(s)+X_{1}-X_{2})-F(Y_{2}(s))-((D_{X}F(Y_{2}(s)),X_{1}-X_{2}))|\leq c||X_{1}-X_{2}||^{2} , \\
& |F_{T}(Y_{2}(T)+X_{1}-X_{2})-F_{T}(Y_{2}(T))-((D_{X}F_{T}(Y_{2}(T)),X_{1}-X_{2}))|\leq c_{T}||X_{1}-X_{2}||^{2} ,
\end{align*}
from which it follows that
\begin{equation}
V(X_{1},t)-V(X_{2},t)\leq((Z_{2}(t),X_{1}-X_{2}))+(c_{T}+cT)||X_{1}-X_{2}||^{2} . \label{eq:3-5}
\end{equation}
By interchanging the roles of $X_{1}$ and $X_{2}$, we also have:
\[
V(X_{2},t)-V(X_{1},t)\leq((Z_{1}(t),X_{2}-X_{1}))+(c_{T}+cT)||X_{1}-X_{2}||^{2} ,
\]
and hence, 
\begin{equation}
V(X_{1},t)-V(X_{2},t)\geq((Z_{2}(t),X_{1}-X_{2}))-(c_{T}+cT)||X_{1}-X_{2}||^{2}+((Z_{1}(t)-Z_{2}(t),X_{1}-X_{2})) .\label{eq:3-6}
\end{equation}
Define $\Upsilon_{1}(s):=D_{X}F_{T}(Y_{1}(T))+\int_{s}^{T}D_{X}F(Y_{1}(\tau))d\tau$ and $\Upsilon_{2}(s):=D_{X}F_{T}(Y_{2}(T))+\int_{s}^{T}D_{X}F(Y_{2}(\tau))d\tau$, and then use them as that in the proof of Proposition \ref{prop:3-4}, 
\begin{align*}
((Z_{1}(t)-Z_{2}(t),X_{1}-X_{2}))= & \dfrac{1}{\lambda} \int_{t}^{T}||Z_{1}(s)-Z_{2}(s)||^{2}ds +\int_{t}^{T}((D_{X}F(Y_{1}(s))-D_{X}F(Y_{2}(s)),Y_{1}(s)-Y_{2}(s)))ds \\
& +((D_{X}F_{T}(Y_{1}(T))-D_{X}F_{T}(Y_{2}(T)),Y_{1}(T)-Y_{2}(T))) .
\end{align*}
From assumption (\ref{eq:2-3}), it follows that 
\begin{align} \label{eq:3-61}
((Z_{1}(t)-Z_{2}(t),X_{1}-X_{2})) \geq & \dfrac{1}{\lambda}\int_{t}^{T}||Z_{1}(s)-Z_{2}(s)||^{2}ds-c'_{T}||Y_{1}(T)-Y_{2}(T)||^{2} \nonumber \\
& -c'\int_{t}^{T}||Y_{1}(s)-Y_{2}(s)||^{2}ds .
\end{align}
Proceeding as in the proof of Proposition \ref{prop:3-4}, we obtain the inequality, for any $\epsilon > 0$,
\begin{align*}
((Z_{1}(t)-Z_{2}(t),X_{1}-X_{2})) \geq & \dfrac{1}{\lambda} \left( 1-\dfrac{T}{\lambda}(1+\epsilon) \left( c'_{T}+c'\dfrac{T}{2} \right) \right)\int_{t}^{T}||Z_{1}(s)-Z_{2}(s)||^{2}ds  \\
& -(c'_{T}+c'T) \left( 1+\dfrac{1}{\epsilon} \right) ||X_{1}-X_{2}||^{2} .
\end{align*}
Thanks to assumption (\ref{eq:2-9}), we can find a sufficiently small $\epsilon$ so that $1-\dfrac{T}{\lambda}(1+\epsilon)\left( c'_{T}+c'\dfrac{T}{2} \right)>0$, from which it follows that
\[
((Z_{1}(t)-Z_{2}(t),X_{1}-X_{2}))\geq -(c'_{T}+c'T) \left( 1+\dfrac{1}{\epsilon} \right) ||X_{1}-X_{2}||^{2} ;
\]
Combining with (\ref{eq:3-6}), we obtain 
\begin{equation}
V(X_{1},t)-V(X_{2},t)\geq((Z_{2}(t),X_{1}-X_{2}))- \left[ (c_{T}+cT)+(c'_{T}+c'T) \left( 1+\dfrac{1}{\epsilon} \right) \right] ||X_{1}-X_{2}||^{2} ; \label{eq:3-7}
\end{equation}
together with the inequality (\ref{eq:3-5}), we can obtain
\[
|V(X_{1},t)-V(X_{2},t)-((Z_{2}(t),X_{1}-X_{2}))|\leq C||X_{1}-X_{2}||^{2} ,
\]
for some constant $C>0$. This implies that $V(X,t)$ has a G\^ateaux derivative (and even a Fr\'echet derivative) at any argument $X \in  \mathcal{H}$ which is $\mathcal{F}^{t}$-measurable and $D_{X}V(X,t)=Z_{X,t}(t)$, and hence it is $\sigma(X)$-measurable in accordance with Proposition \ref{prop:3-31}. Moreover the gradient satisfies the first estimate from (\ref{eq:3-31}) in accordance with (\ref{eq:3-3}) in Proposition \ref{prop:3-4}. We next establish the second estimate from (\ref{eq:3-31}).
The tower property gives
\begin{align*}
((Z_{1}(t)-Z_{2}(t),X_{1}-X_{2}))= & ((X_{1}-X_{2},D_{X}F_{T}(Y_{1}(T))-D_{X}F_{T}(Y_{2}(T)) \\
& +\int_{t}^{T}(D_{X}F(Y_{1}(s))-D_{X}F(Y_{2}(s)))ds\,)) ;
\end{align*}
and hence, 
\begin{align*}
|((Z_{1}(t)-Z_{2}(t),X_{1}-X_{2}))| \leq & ||X_{1}-X_{2}||\,\left(c_{T}||Y_{1}(T)-Y_{2}(T)||+c\int_{t}^{T}||Y_{1}(s)-Y_{2}(s)||ds \right) \\
\leq & ||X_{1}-X_{2}||\,\left[(c_{T}+cT)||X_{1}-X_{2}||+\dfrac{c_{T}}{\lambda}\int_{t}^{T}||Z_{1}(s)-Z_{2}(s)||ds \right. \\
& \quad + \left. \dfrac{c}{\lambda}\int_{t}^{T}\left( \int_{t}^{s}||Z_{1}(\tau)-Z_{2}(\tau)||d\tau \right) ds \right] .
\end{align*}
After similar calculations as before, and then combining with (\ref{eq:3-61}), we obtain, for any $\epsilon > 0$, 
\begin{align*}
\dfrac{1}{\lambda}\int_{t}^{T}||Z_{1}(s)-Z_{2}(s)||^{2}ds \leq & (c_{T}+cT)\left( 1+\dfrac{1}{2\lambda\epsilon} \right)||X_{1}-X_{2}||^{2} + \dfrac{T\epsilon}{2\lambda} \left( c_{T}+\dfrac{cT}{2} \right) \int_{t}^{T}||Z_{1}(s)-Z_{2}(s)||^{2}ds \\
& + c'_{T}||Y_{1}(T)-Y_{2}(T)||^{2} + c'\int_{t}^{T}||Y_{1}(s)-Y_{2}(s)||^{2}ds .
\end{align*}
Finally, we obtain the estimate 
\begin{align} \label{eq:3-71}
& \quad \dfrac{1}{\lambda} \left[ 1-\dfrac{T\epsilon}{2} \left( c_{T}+\dfrac{cT}{2} \right)-\dfrac{T}{\lambda}(1+\epsilon)\left( c'_{T}+\dfrac{c'T}{2} \right) \right] \int_{t}^{T}||Z_{1}(s)-Z_{2}(s)||^{2}ds \\
& \leq \left[ (c_{T}+cT)\left( 1+\dfrac{1}{2\lambda} \right)+(c'_{T}+c'T)\left( 1+\dfrac{1}{\epsilon} \right) \right] ||X_{1}-X_{2}||^{2} . \nonumber 
\end{align}
As previously, the condition (\ref{eq:2-9}) ensures that there is a sufficiently small $\epsilon$, such that we can have
\[
\int_{t}^{T}||Z_{1}(s)-Z_{2}(s)||^{2}ds\leq C||X_{1}-X_{2}||^{2},
\]
for some constant $C>0$. By using the expressions in (\ref{eq:3-1}), we deduce that
\[
\sup_{t}||Y_{1}(s)-Y_{2}(s)||,\;\sup_{t\leq s\leq T}||Z_{1}(s)-Z_{2}(s)||\leq C||X_{1}-X_{2}|| .
\]
Recalling that $Z_{1}(t)=D_{X}V(X_{1},t)$ and $Z_{2}(t)=D_{X}V(X_{2},t)$, the second estimate (\ref{eq:3-31}) follows accordingly. 
\end{proof}

\subsection{Proof of Proposition \ref{prop:3-6}}
\begin{proof}
According to the optimality principle (\ref{eq:3-8}), we have
\[
V(X,t)-V(X,t+h)=\dfrac{1}{2\lambda}\int_{t}^{t+h}||Z(s)||^{2}ds+\int_{t}^{t+h}F(Y(s))ds+V(Y(t+h),t+h)-V(X,t+h) ;
\]
From the differentiability of $V$, we get
\begin{align*}
V(Y(t+h),t+h)-V(X,t+h) & =\int_{0}^{1}((D_{X}V(X+\theta(Y(t+h)-X),t+h),Y(t+h)-X))d\theta \\
& =-\dfrac{1}{\lambda}((D_{X}V(X,t+h),\int_{t}^{t+h}Z(s)ds)) \\
& \quad +\int_{0}^{1}((D_{X}V(X+\theta(Y(t+h)-X),t+h)-D_{X}V(X,t+h),Y(t+h)-X)) ,
\end{align*}
where for the second equality, we have used the fact that $D_{X}V(X,t+h)$ is independent of $\mathcal{W}_t^{t+h}$, since $D_{X}V(X,t+h)$ is $\sigma(X)$-measurable in accordance with Proposition \ref{prop:3-31}. Using also
the Lipschitz property (\ref{eq:3-31}) in Theorem \ref{Theo3-1} of the gradient of the value function, together with the assumption (\ref{eq:2-2}) and (\ref{eq:3-4}) in Proposition \ref{prop:3-4}, we obtain easily that 
\[
||V(X,t)-V(X,t+h)||\leq Ch(1+||X||^{2}) , \text{ for any } 0 \leq h \leq T, 
\]
which is the result (\ref{eq:3-9}). To establish the H\"older continuity in time of $D_{X}V(X,t)$, we first prove a useful esimate. Fix two times $0 \leq t_{1} < t_{2} \leq T$. Take $X$ to be $\mathcal{F}^{t_{1}}$-measurable. For notational simplicity, we take $Y_{X,t_{1}}(s)$ and $Y_{X,t_{2}}(s)$ being denoted respectively by $Y_{1}(s)$ and $Y_{2}(s)$; and similarly $Z_{X,t_{1}}(s)$ by $Z_{1}(s)$ and $Z_{X,t_{2}}(s)$ by $Z_{2}(s)$. We claim that: 
\begin{align}
& \sup_{t_{2}\leq s\leq T}||Y_{X,t_{1}}(s)-Y_{X,t_{2}}(s)||\leq C(1+||X||)(t_{2}-t_{1})^{\frac{1}{2}} , \label{eq:3-10} \\
& \sup_{t_{2}\leq s\leq T}||Z_{X,t_{1}}(s)-Z_{X,t_{2}}(s)||\leq C(1+||X||)(t_{2}-t_{1})^{\frac{1}{2}} . \label{eq:3-101}
\end{align}
These estimates are derived by the reasoning as in the proof of Theorem \ref{Theo3-1}, and we just sketch the key idea here; so we first obtain 
\begin{align*}
((Z_{2}(t_{2})-Z_{1}(t_{2}),X-Y_{1}(t_{2})))= & \dfrac{1}{\lambda}\int_{t_{2}}^{T}||Z_{2}(s)-Z_{1}(s)||^{2}ds +((Y_{2}(T)-Y_{1}(T),D_{X}F_{T}(Y_{2}(T))-D_{X}F_{T}(Y_{1}(T))\,)) \\
& +\int_{t_{2}}^{T}((Y_{2}(s)-Y_{1}(s),D_{X}F(Y_{2}(s))-D_{X}F(Y_{1}(s))\,))ds \\
\geq & \dfrac{1}{\lambda}\int_{t_{2}}^{T}||Z_{2}(s)-Z_{1}(s)||^{2}ds-c'_{T}||Y_{2}(T)-Y_{1}(T)||^{2}-c'\int_{t_{2}}^{T}||Y_{2}(s)-Y_{1}(s)||^{2}ds ,  
\end{align*}
and similar to (\ref{eq:3-71}), it follows that 
\begin{align*}
& \dfrac{1}{\lambda}\left[1-\dfrac{T\epsilon}{2}\left(c_{T}+\dfrac{cT}{2}\right)-\dfrac{T}{\lambda}(1+\epsilon)\left(c'_{T}+\dfrac{c'T}{2}\right)\right] \int_{t_{2}}^{T}||Z_{1}(s)-Z_{2}(s)||^{2}ds \\
\leq & 
\left[(c_{T}+cT)\left(1+\dfrac{1}{2\lambda}\right)+(c'_{T}+c'T)\left(1+\dfrac{1}{\epsilon}\right)\right]||X-Y_{1}(t_{2})||^{2} ,
\end{align*}
from which we derive $\int_{t_{2}}^{T}||Z_{1}(s)-Z_{2}(s)||^{2}ds\leq C||X-Y_{1}(t_{2})||^{2}$ for some $C>0$, and by using the expression (\ref{eq:3-1}), we see that 
\[
\sup_{t_{2}\leq s\leq T}||Y_{1}(s)-Y_{2}(s)||^{2}\leq C||X-Y_{1}(t_{2})||^{2},\:\sup_{t_{2}\leq s\leq T}||Z_{1}(s)-Z_{2}(s)||^{2}\leq C||X-Y_{1}(t_{2})||^{2} ;
\]
also noting that $X-Y_{1}(t_{2})=\dfrac{1}{\lambda}\int_{t_{1}}^{t_{2}}Z_{1}(s)ds-\sigma(w(t_{2})-w(t_{1}))$, we further obtain  the estimates (\ref{eq:3-10}),(\ref{eq:3-101}). Next, we establish the H\"older continuity from the left in time of $D_X V(X,t)$. Let $t_{n}\uparrow t$, and set $Y^{n}(s)=Y_{X,t_{n}}(s)$, $Z^{n}(s)=Z_{X,t_{n}}(s)$ and $Y(s)=Y_{X,t}(s),$ $Z(s)=Z_{X,t}(s)$.
As a consequence of (\ref{eq:3-10}), it follows that 
\[
\sup_{t\leq s\leq T}||Y(s)-Y^{n}(s)||\leq C(1+||X||)(t-t_{n})^{\frac{1}{2}},\:\sup_{t\leq s\leq T}||Z(s)-Z^{n}(s)||\leq C(1+||X||)(t-t_{n})^{\frac{1}{2}} .
\]
We then telescope the term:
\begin{align*}
Z^{n}(t_{n})= & \mathbb{E}\left[ D_{X}F_{T}(Y(T))+\int_{t}^{T}D_{X}F(Y(s))ds \Bigg| \,\mathcal{W}_{X,t}^{t_{n}}\right] +\mathbb{E}\left[ \int_{t_{n}}^{t}D_{X}F(Y^{n}(s))ds \Bigg| \,\mathcal{W}_{X,t}^{t_{n}}\right] \\
& +\mathbb{E}\left[D_{X}F_{T}(Y^{n}(T))-D_{X}F_{T}(Y(T))+\int_{t}^{T}(D_{X}F(Y^{n}(s))-D_{X}F(Y(s)))ds \Bigg| \,\mathcal{W}_{X,t}^{t_{n}}\right] , 
\end{align*}
and from the previous estimates, due to the Lipschitz property of $D_X V(X,t)$, it remains to observe that $\mathbb{E}\left[D_{X}F_{T}(Y(T))+\int_{t}^{T}D_{X}F(Y(s))ds \Bigg| \,\mathcal{W}_{X,t}^{t_{n}}\right]\rightarrow Z(t)$ as $t_{n}\uparrow t$; indeed $\mathbb{E}\left[ D_{X}F_{T}(Y(T))+\int_{t}^{T}D_{X}F(Y(s))ds \Bigg| \,\mathcal{W}_{X,t}^{u} \right]$ is  a continuous martingale in $u$. Finally, for the continuity from the right $t_{n}\downarrow t$, from (\ref{eq:3-101}), we first have $||Z^{n}(t_{n})-Z(t_{n})||\leq C(1+||X||)(t_{n}-t)^{\frac{1}{2}}$. Next 
\[
Z(t_{n})+\int_{t}^{t_{n}}D_{X}F(Y(s))ds=\mathbb{E}\left[\int_{t}^{T}D_{X}F(Y(\tau))d\tau+D_{X}F_{T}(Y(T)) \Bigg| \mathcal{W}_{X,t}^{t_{n}} \right] ,
\]
and the property follows again from the continuity property of the martingale with respect to the filtration $\mathcal{W}_{X,t}^{u}$ as above. 
\end{proof}


\begin{thebibliography}{10}
	
	\bibitem{bensoussan2013mean}
	Alain Bensoussan, Jens Frehse, and Phillip Yam.
	\newblock {\em Mean field games and mean field type control theory}.
	\newblock Springer, 2013.
	
	\bibitem{bensoussan2015master}
	Alain Bensoussan, Jens Frehse, and Sheung Chi~Phillip Yam.
	\newblock The master equation in mean field theory.
	\newblock {\em Journal de Math{\'e}matiques Pures et Appliqu{\'e}es},
	103(6):1441--1474, 2015.
	
	\bibitem{bensoussan2017interpretation}
	Alain Bensoussan, Jens Frehse, and Sheung Chi Phillip Yam.
	\newblock On the interpretation of the master equation.
	\newblock {\em Stochastic Processes and their Applications}, 127(7):2093--2137,
	2017.
	
	\bibitem{bensoussan2015control}
	Alain Bensoussan and Sheung Chi Phillip Yam.
	\newblock Control problem on space of random variables and master equation.
	\newblock {\em arXiv preprint arXiv:1508.00713}, 2015.
	
	\bibitem{cardaliaguet2013}
	Pierre Cardaliaguet. Notes on P.L. Lions' lectures
at the College de France. Working Paper, September 2013. 	
	
	\bibitem{cardaliaguet2015master}
	Pierre Cardaliaguet, Fran{\c{c}}ois Delarue, Jean-Michel Lasry, and
	Pierre-Louis Lions.
	\newblock The master equation and the convergence problem in mean field games.
	\newblock {\em arXiv preprint arXiv:1509.02505}, 2015.
	
	\bibitem{carmona2017probabilistic}
	Rene Carmona and Fran{\c{c}}ois Delarue.
	\newblock {Probabilistic Theory of Mean Field Games: vol. I, Mean Field FBSDEs,
		Control, and Games}.
	\newblock {\em Stochastic Analysis and Applications, Springer Verlag}, 2017.
	
	\bibitem{carmona2017probabilisticII}
	Rene Carmona and Fran{\c{c}}ois Delarue.
	\newblock {Probabilistic Theory of Mean Field Games: vol. II, Mean Field Games
		with Common Noise and Master Equations}.
	\newblock {\em Stochastic Analysis and Applications. Springer Verlag}, 2017.
	
	\bibitem{fabbri2017stochastic}
	Giorgio Fabbri, Fausto Gozzi, and Andrzej Swiech.
	\newblock {\em Stochastic Optimal Control in Infinite Dimension: Dynamic
		Programming and HJB Equations}, volume~82.
	\newblock Springer, 2017.
	
	\bibitem{gangbo2015existence}
	Wilfrid Gangbo and Andrzej {\'S}wi{e}ch.
	\newblock Existence of a solution to an equation arising from the theory of
	mean field games.
	\newblock {\em Journal of Differential Equations}, 259(11):6573--6643, 2015.
	
	\bibitem{gangbo2015metric}
	Wilfrid Gangbo and Andrzej {\'S}wi{e}ch.
	\newblock {Metric viscosity solutions of Hamilton--Jacobi equations depending
		on local slopes}.
	\newblock {\em Calculus of Variations and Partial Differential Equations},
	54(1):1183--1218, 2015.
	
	\bibitem{huang2006large}
	Minyi Huang, Roland~P Malham{\'e}, and Peter~E Caines.
	\newblock Large population stochastic dynamic games: closed-loop
	{McKean-Vlasov} systems and the nash certainty equivalence principle.
	\newblock {\em Communications in Information \& Systems}, 6(3):221--252, 2006.
	
	\bibitem{lasry07}
	Jean-Michel Lasry and Pierre-Louis Lions.
	\newblock Mean field games.
	\newblock {\em Japanese Journal of Mathematics}, 2(1):229--260, 2007.
	
	\bibitem{lions07}
	Pierre-Louis Lions.
	\newblock Th{\'e}orie des jeux de champ moyen et applications (mean field
	games).
	\newblock {\em Cours du College de France. http://www. college-de-france.
		fr/default/EN/all/equ der/audio video. jsp}, 2009, 2007.
	
	\bibitem{lions14}
	Pierre-Louis Lions. Seminar at College de France. November, 2014.
	
	\bibitem{pham2017dynamic}
	Huy{\^e}n Pham and Xiaoli Wei.
	\newblock Dynamic programming for optimal control of stochastic mckean--vlasov
	dynamics.
	\newblock {\em SIAM Journal on Control and Optimization}, 55(2):1069--1101,
	2017.
	
	\bibitem{pham2015bellman}
	Huy{\^e}n Pham and WEI Xiaoli.
	\newblock Bellman equation and viscosity solutions for mean-field stochastic
	control problem.
	\newblock {\em ESAIM: Control, Optimisation and Calculus of Variations}, 2015.
	
\end{thebibliography}
\end{document}